\numberwithin{equation}{section}
\newtheorem{theorem}{Theorem}
\newtheorem{lemme}[equation]{Lemma}
\newtheorem{proposition}[equation]{Proposition}
\newtheorem{definition}[equation]{Definition}
\newtheorem*{corollary*}{Corollary}
\newcommand\build[3]{\mathrel{\mathop{\kern
0pt#1}\limits_{\textstyle #2}^{\textstyle #3}}}
\newcommand\PP{{\mathbbm P}}
\newcommand\Gal{{\rm Gal}}
\newcommand\hookr\hookrightarrow
\newcommand\hookl\hookleftarrow
\newcommand\isom{\,\smash{\mathop{\hbox to 5mm{\rightarrowfill}}
    \limits^\sim}\,}
\theoremstyle{definition}
\newtheorem{remark}[equation]{Remark}
\newcommand{\mat}[4]{
 \left(  \begin{matrix} #1 & #2 \\ #3 & #4 \end{matrix} \right)}
\def\bP{{\PP}}
\def\P{\textsf{P}}
\def\P1min2{\bP^1_{\backslash\{0,\infty\}}}
\begin{document}
    \title{Local Indecomposability  of Hilbert Modular Galois Representations}
    \author{Bin Zhao\footnote{The author is partially supported by Hida's NSF grant DMS 0753991 and DMS 0854949 through UCLA graduate division.}}

    \date{\today}

\maketitle
\begin{abstract}
We prove the indecomposability of Galois representation restricted
to the $p$-decomposition group attached to a non CM nearly
$p$-ordinary weight two Hilbert modular form under mild conditions.
\end{abstract}

The main purpose of this paper is to decide the indecomposability of
a Hilbert modular ordinary $p$-adic Galois representation restricted
to the decomposition group at $p$, under the assumption that the
representation is not of CM type. This question was originally posed
by R.Greenberg. In the elliptic modular case, it was studied by
Ghate and Vatsal and they gave an affirmative answer in \cite{GV}
under some conditions. In a recent preprint \cite{BGV}, joint with
Balasubramanyam, they generalized their result
 to the Hilbert modular case under some restrictive conditions. Their method is to study the
 specialization of $\Lambda$-adic forms corresponding to weight one
 classical forms, and they use the density
  of such specializations to
 conclude for higher weight modular forms.

In contrast to \cite{BGV}, our method is geometric and relies on the
study of Galois representations attached to weight two nearly
$p$-ordinary Hilbert modular eigenforms. More precisely, let $F$ be
a totally real field and
 $f$ be a (parallel)
weight two Hilbert  modular form over $F$. For each integral ideal
$\mathfrak{a}$ of $F$, let $T(\mathfrak{a})$ be the Hecke operator
for $\mathfrak{a}$, and assume that for all $\mathfrak{a}$, there
exist $c(\mathfrak{a},f)\in \bar{\mathbb{Q}}$, such that
$T(\mathfrak{a})f=c(\mathfrak{a},f)f$. We denote by $K_f$ the field
generated by all $c(\mathfrak{a},f)$'s over $\mathbb{Q}$, which is
known to be a number field.

For any prime $\lambda$ of $K_f$ over a rational prime $p$, let
$K_{f,\lambda}$ be the completion of $K_f$ at $\lambda$. It is well
known that  there is a Galois represention $\rho_f:
\Gal(\bar{\mathbb{Q}}/F)\rightarrow GL_2(K_{f,\lambda})$ attached to
$f$. Moreover if the eigenform $f$ is nearly $p$-ordinary in the
sense that $c(\mathfrak{p},f )$ is a unit mod $\lambda$ for all
primes $\mathfrak{p}$ of $F$ over $p$ (\cite{Wi88} or \cite{H06}
Chapter $3$), then up to equivalence the restriction of $\rho_f$ to
the decomposition group $D_\mathfrak{p}$ of
$\Gal(\bar{\mathbb{Q}}/F)$ at $\mathfrak{p}$ is of the shape (see
\cite{Wi88} Theorem $2$ for the ordinary case and \cite{H89}
Proposition $2.3$ for the nearly ordinary case):
$$\rho_f|_{D_\mathfrak{p}}\sim \mat{\epsilon_1}{\ast}{0}{\epsilon_2}.$$
In this paper we need to put the following technical condition on
$f$ when the degree of $F$ over $\mathbb{Q}$ is even: there exists a
finite place $v$ of $F$ such that $\pi_v$ is square integrable (i.e.
special or supercuspidal) where $\pi_f=\otimes_v \pi_v$ is the
automorphic representation of $GL_2(F_{\mathbb{A}_f})$ associated to
$f$. Then the main result of this paper is:
\begin{theorem}\label{thm}
If $f$ does not have complex multiplication, then
$\rho_f|_{D_\mathfrak{p}}$ is indecomposable.
\end{theorem}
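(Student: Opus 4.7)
The plan is to realize $\rho_f$ geometrically as the $\lambda$-adic Tate module of an abelian variety factor of the Jacobian of a Shimura curve, and then use Serre--Tate canonical liftings together with Faltings' isogeny theorem to convert a hypothetical decomposition of $\rho_f|_{D_\mathfrak{p}}$ into an honest complex multiplication structure. First, I would use the Jacquet--Langlands correspondence to transfer $\pi_f$ to an automorphic representation on (the units of) a quaternion algebra $B/F$ that is split at exactly one archimedean place of $F$: when $[F:\mathbb{Q}]$ is odd this is automatic, while when $[F:\mathbb{Q}]$ is even the hypothesis that $\pi_v$ is square integrable at some finite place is used precisely to ramify $B$ at $v$ so that the parity condition on ramification places can be satisfied. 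The resulting Shimura curve $M_B$ is projective over $F$, and the Eichler--Shimura construction cuts out an abelian variety quotient $A_f$ of $\mathrm{Jac}(M_B)$ of dimension $[K_f:\mathbb{Q}]$ with $K_f \hookrightarrow \mathrm{End}^0(A_f/F)$ such that $T_\lambda A_f \otimes_{\mathcal{O}_{K_f,\lambda}} K_{f,\lambda}$ realizes $\rho_f$.

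The next step is to show that $A_f$ has good ordinary reduction at $\mathfrak{p}$ and to translate the hypothetical splitting $\rho_f|_{D_\mathfrak{p}} \cong \epsilon_1 \oplus \epsilon_2$ into a splitting of the $\mathfrak{p}$-divisible group. Good reduction follows from the smoothness of an integral model of $M_B$ at $\mathfrak{p}$, assuming $\mathfrak{p}$ is prime to the level and $\mathrm{disc}(B)$; the near $p$-ordinarity of $f$ then matches the connected--\'etale sequence of $A_f[\mathfrak{p}^\infty]$ with the two characters $\epsilon_1,\epsilon_2$, via Dieudonn\'e theory. Using the $\mathcal{O}_{K_f}\otimes\mathbb{Z}_p$-linear structure and Fontaine's classification of crystalline representations, a splitting of $\rho_f|_{D_\mathfrak{p}}$ as an abstract $K_{f,\lambda}$-representation then lifts to a splitting of $A_f[\mathfrak{p}^\infty]$ over $\mathcal{O}_{F_\mathfrak{p}}$ as multiplicative $\oplus$ \'etale. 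By the Serre--Tate theorem, the formal deformation space of an ordinary abelian variety is a formal torus whose identity section is the canonical lift, and split lifts correspond to the canonical lift. Consequently $A_f/\mathcal{O}_{F_\mathfrak{p}}$ must be (the canonical lift of) its ordinary reduction $\bar A_f$.

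The final step is to derive CM. Since $\bar A_f$ is ordinary over a finite field, $\mathrm{End}^0(\bar A_f)$ contains a CM algebra of rank $2\dim A_f$ over $\mathbb{Q}$; these endomorphisms lift uniquely to the canonical lift, so $\mathrm{End}^0(A_f/F_\mathfrak{p})$ acquires this CM algebra. Because all endomorphisms of $A_f/\bar F$ are defined over a finite extension of $F$ and the Tate module functor is faithful, one has $\mathrm{End}^0(A_f/\bar F)=\mathrm{End}^0(A_f/\bar F_\mathfrak{p})$, so the CM structure is in fact global. Hecke-equivariance of the construction $A_f \subset \mathrm{Jac}(M_B)$ then forces $\pi_f$ to be a theta series attached to a Hecke character of a CM extension of $F$, contradicting the non-CM hypothesis. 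The main obstacle I expect is step two: converting the $K_{f,\lambda}$-linear splitting of $\rho_f|_{D_\mathfrak{p}}$ into a splitting of the integral $\mathfrak{p}$-divisible group with its full $\mathcal{O}_{K_f}\otimes\mathbb{Z}_p$-action, which requires carefully tracking the decomposition of $\mathcal{O}_{K_f}\otimes\mathbb{Z}_p$ into local factors and verifying that the Galois-module splitting descends to a splitting of Dieudonn\'e modules (and hence of formal groups) compatibly with the action of $\mathcal{O}_{K_f,\lambda}$. A secondary subtlety is checking that the new endomorphisms produced by the canonical lift genuinely commute with the full Hecke action on $\mathrm{Jac}(M_B)$, which is needed to conclude that $f$ itself --- and not merely $A_f$ in some degenerate sense --- acquires CM.
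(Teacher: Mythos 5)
Your proposal shares the paper's starting point (realizing $\rho_f$ via a GL(2)-type abelian variety $A_f$ cut out of the Jacobian of a Shimura curve through Jacquet--Langlands and Eichler--Shimura), but the central step is incorrect. You claim that a splitting of the $\mathfrak{p}$-divisible group $A_f[\mathfrak{p}^\infty]$ over $\mathcal{O}_{F_\mathfrak{p}}$ (equivalently, over $\widetilde{W}_p$ after base change) forces $A_f$ to be \emph{the} canonical lift of its reduction $\bar{A}_f$. This is false. What the splitting actually gives is that the Serre--Tate coordinate $t_\mathfrak{p}(x)$ is a $p$-power root of unity in $1+\mathfrak{m}_{\widetilde{W}_p}$ --- not that $t_\mathfrak{p}(x)=1$. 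The condition for $A[\mathfrak{p}^n]$ to split at finite level $n$ is that $t$ lies in $(1+\mathfrak{m})^{p^n}$ modulo appropriate congruences, and passing to the limit gives $t\in\mu_{p^\infty}(\widetilde{W}_p)$, which is a nontrivial finite group once $\widetilde{K}_p$ contains $p$-power roots of unity. Only $t=1$ corresponds to the canonical lift; there are other points with split $p$-divisible group that need not carry the canonical CM structure in the naive sense, and so the chain ``split $\Rightarrow$ canonical lift $\Rightarrow \mathrm{End}^0(\bar{A}_f)$ lifts $\Rightarrow A_f$ has CM globally'' breaks at the first implication. This is precisely the reference point (Brako\v{c}evi\'c and Hida's book on elliptic curves, Section $5.3.4$) that the paper invokes: splitting is \emph{weaker} than being a canonical lift.

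The way the paper circumvents this gap is genuinely different from what you propose. Instead of trying to promote the local splitting at $p$ into a CM structure directly, the paper (following Hida's preprint \cite{H12}) observes that the root-of-unity condition $\log_p t_\mathfrak{p}(x)=0$ is \emph{exactly} the condition needed for the Kodaira--Spencer eigen-coordinates $\tau_\sigma$ to be fixed by the CM automorphism $g=\hat{\rho}(\alpha)$ of the Shimura variety. One then leverages this at a \emph{different} auxiliary prime $\mathfrak{L}$ (over $l\neq p$) to conclude that the endomorphism field $\mathrm{End}_F^0(A_{\mathfrak{P}/\bar{\mathbb{F}}_p})$ is isomorphic to $\mathrm{End}_F^0(A_{\mathfrak{L}/\bar{\mathbb{F}}_l})$ for \emph{every} suitable auxiliary $\mathfrak{L}$. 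A Chebotarev argument (using open-image results) produces two auxiliary primes $\mathfrak{L},\mathfrak{Q}$ with \emph{non-isomorphic} reduction CM fields $M_\mathfrak{L}\not\cong M_\mathfrak{Q}$, contradicting the above. So the contradiction is derived by comparing data at two auxiliary primes, never by concluding that $A_f$ itself is a canonical lift. You also do not address the Hecke-field issue (the paper replaces the possibly CM Hecke field by a totally real field of the same degree via Proposition \ref{1.4}, which is essential to even land in the Hilbert modular moduli problem), nor the case $\mathfrak{p}\mid\mathfrak{m}$, where good reduction only holds after passing to $F_\psi$ and separately handling the special representation case via Mumford's $p$-adic uniformization. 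These are not minor details: the canonical-lift argument as stated simply does not close, and the paper's auxiliary-prime comparison is the key idea you are missing.
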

We will state this theorem in a little more general way as Theorem
\ref{Th4} in Section $5$ and give a proof there. Under our
assumption on $f$, there exists a quaternion algebra $B$ over $F$
and a new form $g$ on $B$ such that if $\pi_g=\otimes_v
\tilde{\pi}_v$ is the automorphic representation of
$B_\mathbb{A}^\times=(B\otimes_\mathbb{Q}\mathbb{A}_f)^\times$
associated to $g$, then $\pi_v\cong \tilde{\pi}_v$ for all finite
places $v$ of $F$ which splits in $B$. Then from \cite{H81} Theorem
$4.4$, we can find an abelian variety $A_{f/F}$ and a homomorphism
$K_f'\rightarrow \mathrm{End}^0(A_{f/F})$ attached to $g$, where
$K_f'$ is a finite extension of the Hecke field $K_f$ whose degree
equals to the dimension of $A_f$, such that the Galois
representation $\rho_f$ comes from the $p$-adic  Tate  module of
$A_f$. Hence the problem is purely geometric. The key ingredient in
the proof is a recent result by Hida on local indecomposability of
Tate modules on certain abelian varieties (\cite{H12}). We need more
notations before we state his result. Let $A_{/k}$ be an abelian
variety over a number field $k$ and
$\iota:\mathcal{O}_L\rightarrow\mathrm{End}(A_{/k})$ be a
homomorphism where $\mathcal{O}_L$ is the integer ring of a totally
real field $L$ whose degree over $\mathbb{Q}$ equals to the
dimension of $A_{/k}$. Assume further that there exists a prime
$\mathfrak{P}$ of $k$ over a rational prime $p$  unramified in $k$
and $L$ such that $A_{/k}$ has good reduction at $\mathfrak{P}$, and
there exists a prime $\mathfrak{p}$ of $L$ over $p$ such that the
reduction $A_\mathfrak{P}$ of $A$ at $\mathfrak{P}$ has nontrivial
$\mathfrak{p}$-torsion points. Then Hida's result can be stated as
follows: if the abelian variety $A_{/k}$ does not have complex
multiplication, then the Tate module $T_\mathfrak{p}(A)$ is
indecomposable as an $I_\mathfrak{P}$-module, here $I_\mathfrak{P}$
is the inertia group of $\Gal(\bar{\mathbb{Q}}/k)$ at
$\mathfrak{P}$.

Hida's result has intimate relationship with the problem we want to
work on. But we cannot apply his result directly to the study of
Hilbert modular Galois representations mainly due to the following
two reasons:
\begin{enumerate}
\item The Hecke field $K_f$ may be a CM field instead of a
totally real field, and in this case the field $K_f'$ cannot be
totally real. In section $1$ we give a careful study of the
endomorphism algebras of abelian varieties of $GL_2$-type, and prove
that we can always replace $K_f'$ by a totally real field with the
same degree over $\mathbb{Q}$. Later (in Section $5$) we will prove
that this change of fields does not affect our study of local
indecomposability.
\item In Hida's result, it is assumed that the rational prime $p$ is
unramified in the fields $k$ and $L$. But in the Hilbert modular
case, we have no control on the ramification of $p$ in the Hecke
field $K_f$ and the rational prime $p$ could ramify in the totally
real field $F$. The assumption that $p$ is unramified in $L$ is to
guarantee that the abelian variety sits in the Hilbert modular
Shimura variety for $GL_{2/L}$. In particular it is required that
the Lie algebra $\mathrm{Lie}(A_{/\mathcal{O}_{(\mathfrak{P})}})$ is
free of rank one over
$\mathcal{O}_L\otimes_\mathbb{Z}\mathcal{O}_{(\mathfrak{P})}$
($\mathcal{O}_{(\mathfrak{P})}$ is the localization of
$\mathcal{O}_k$ at the prime $\mathfrak{P}$). When $p$ is ramified
in $L$, this condition may fail on the special fiber of
$\mathcal{O}_{(\mathfrak{P})}$. Thus we change the definition of
$p$-integral models of Hilbert modular Shimura varieties by the one
studied by Deligne and Pappas in \cite{DP}. We will recall the
notion of abelian variety with real multiplication we want to work
with in section $2$, and give the precise definition of the Hilbert
modular Shimura variety in section $3$. We also prove that under
these definitions, the abelian variety $A_{f/F}$ gives rise to a
point in the Shimura variety. The assumption that $p$ is unramified
in the field $k$ is only used in Hida's construction of
eigen-coordinates (see \cite{H12} section $4$). We prove in section
$4$ that his construction works well even if $p$ is ramified in the
field $k$.
\end{enumerate}
We prove the main result in Section $5$, and we give an
$\Lambda$-adic version of our result by applying an argument in
\cite{GV},which generalizes the result of \cite{BGV} unconditionally
when the degree $[F:\mathbb{Q}]$ is odd and when the degree
$[F:\mathbb{Q}]$ is even, we need to assume that at some finite
place $v$ of $F$, the representation $\pi_v$ is square integrable
(see the explanation above Theorem \ref{thm}). At the end we explain
how our result can be applied to study a problem of Coleman.
\\

$\mathbf{Acknowledgement}$: I would like to express my sincere gratitude to my advisor Professor Hida Haruzo
for his patient guidance as well as constant support. Without his careful explanation of his work \cite{H12}
, I cannot get the approach to the work in this paper. I would also like to  thank Ashay Burungale for many
helpful discussions on this topic.
\\

$\mathbf{Notations}$: Throughout this paper, we use
 $F$ to denote a totally real field with degree $d$ over $\mathbb{Q}$
and use $\mathcal{O}_F$ to denote its integer ring. Let  $\mathcal
{D}=\mathcal {D}_{F/\mathbb{Q}}$ be the different of $F/\mathbb{Q}$
and $d_F=Norm_{F/\mathbb{Q}}(\mathcal {D})$ be its discriminant. For
any prime $\mathfrak{p}$ of $\mathcal{O}_F$, let
$\mathcal{O}_\mathfrak{p}$ (resp. $F_\mathfrak{p}$) be the
completion of $\mathcal{O}_F$ (resp. $F$) with respect to
$\mathfrak{p}$.

Fix an algebraic closure $\bar{\mathbb{Q}}$ of $\mathbb{Q}$. For
each rational prime $p$, we fix an algebraic closure
$\bar{\mathbb{Q}}_p$ of $\mathbb{Q}_p$ and an embedding
$i_p:\bar{\mathbb{Q}}\rightarrow\bar{\mathbb{Q}}_p$. We also fix
  an algebraic closure $\bar{\mathbb{F}}_p$ of the prime field
$\mathbb{F}_p$. Let $W_p=W(\bar{\mathbb{F}}_p)$ be the ring of Witt
vectors of $\bar{\mathbb{F}}_p$, and $K_p$ be its quotient field.
Then $K_p$ can be identified with the maximal unramified extension
in $\bar{\mathbb{Q}}_p/\mathbb{Q}_p$.

We will recall the other necessary notations at the beginning of each section.

\tableofcontents

\section{Abelian Varieties of GL(2)-type}
Let $E$ be a number field with degree $d$ over $\mathbb{Q}$, and
$A_{/\overline{\mathbb{Q}}}$ be an abelian variety of dimension $d$.
Set
$\mathrm{End}^0(A_{/\overline{\mathbb{Q}}})=\mathrm{End}(A_{/\overline{\mathbb{Q}}})\otimes_{\mathbb{Z}}\mathbb{Q}$, which
is a finite dimensional semisimple algebra over $\mathbb{Q}$.
Suppose that we have an algebra homomorphism $E\rightarrow
\mathrm{End}^0(A_{/\overline{\mathbb{Q}}})$, which identifies $E$
with a subfield of $\mathrm{End}^0(A_{/\overline{\mathbb{Q}}})$.
Recall that the abelian variety $A_{/\overline{\mathbb{Q}}}$ has complex multiplication
if $\mathrm{End}^0(A_{/\overline{\mathbb{Q}}})$
contains a commutative semisimple subalgebra of dimension $2d$ over $\mathbb{Q}$.
Then from \cite{GME} Section $5.3.1$, we have the following two
results:

\begin{proposition}\label{1.1}
If $A_{/\overline{\mathbb{Q}}}$ does not have complex multiplication,
 then $A_{/\overline{\mathbb{Q}}}$ is isotypic (i.e.
there exists a simple abelian variety $B_{/\overline{\mathbb{Q}}}$
such that $A_{/\overline{\mathbb{Q}}}$ is isogeneous to
$(B_{/\overline{\mathbb{Q}}})^e$ for some $e\geq1$), and
$\mathrm{End}^0_E(A_{/\overline{\mathbb{Q}}})=E$.
\end{proposition}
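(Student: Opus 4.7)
The plan is to exploit the embedding $E \hookrightarrow \mathrm{End}^0(A_{/\overline{\mathbb{Q}}})$ together with Poincar\'e complete reducibility, controlling everything through the action of $E$ on the $\mathbb{Q}$-vector space $H^1(A,\mathbb{Q})$ of dimension $2d$.

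First, I would write $A \sim \prod_{j=1}^{r} C_j^{n_j}$ with the $C_j$ simple and pairwise non-isogenous; then $\mathrm{End}^0(A) = \prod_{j=1}^{r} M_{n_j}(D_j)$ where $D_j = \mathrm{End}^0(C_j)$. Since $E$ is a field, its composition with each projection is either zero or injective; but $1 \in E$ maps to $1$ in every factor, so $E$ embeds into every $M_{n_j}(D_j)$. Consequently each $H^1(C_j^{n_j},\mathbb{Q})$ is a nonzero, hence free, $E$-module, and its $\mathbb{Q}$-dimension $2n_j \dim C_j$ is a positive multiple of $[E:\mathbb{Q}] = d$. Combined with $\sum_j n_j \dim C_j = d$, this forces $r \leq 2$. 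If $r = 2$, each $B_j := C_j^{n_j}$ has dimension $d/2$ and inherits an embedding of $E$ into $\mathrm{End}^0(B_j)$, giving a commutative semisimple subalgebra of $\mathbb{Q}$-dimension $2 \dim B_j$ on each factor; thus $\mathrm{End}^0(A)$ contains a commutative semisimple subalgebra of $\mathbb{Q}$-dimension $2d$, so $A$ is of CM type, contradicting the hypothesis. Hence $r = 1$ and $A$ is isotypic.

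For the second assertion, write $A \sim C^n$. A dimension count shows that $H^1(A,\mathbb{Q})$ is a free $E$-module of rank $2$, so the commutant $\mathrm{End}^0_E(A)$ is a semisimple $E$-algebra embedding into $\mathrm{End}_E(H^1(A,\mathbb{Q})) \cong M_2(E)$. By the elementary classification of semisimple $E$-subalgebras of $M_2(E)$, the commutant is one of $E$, $E \times E$, a quadratic field extension $K$ of $E$, or $M_2(E)$ itself. In any case other than $\mathrm{End}^0_E(A) = E$, the commutant contains a commutative semisimple $E$-subalgebra of $E$-dimension $2$, hence a commutative semisimple $\mathbb{Q}$-subalgebra of $\mathrm{End}^0(A)$ of dimension $2d = 2 \dim A$, forcing $A$ to be of CM type and contradicting the hypothesis. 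Therefore $\mathrm{End}^0_E(A) = E$.

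The essential inputs are Poincar\'e complete reducibility, the standard bound that a commutative semisimple subalgebra of $\mathrm{End}^0(A)$ has $\mathbb{Q}$-dimension at most $2 \dim A$ (with equality precisely in the CM case, which is nearly the definition used in the paper), and the classification of semisimple subalgebras of a $2 \times 2$ matrix algebra over a field. I do not anticipate any substantial obstacle; the only delicate step is the bookkeeping that rules out two or more isotypic components, which, thanks to the non-CM hypothesis, reduces to the same commutative-dimension principle.
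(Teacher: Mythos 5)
Your proof is correct. The paper does not actually prove Proposition \ref{1.1}: it simply cites \cite{GME}, Section 5.3.1, so there is no in-text argument to compare against. The route you take is the standard one: decompose $A$ up to isogeny into isotypic pieces $C_j^{n_j}$, use that $E$ (being a field mapping $1\mapsto 1$) injects into each factor $\mathrm{End}^0(C_j^{n_j})$ so that $H^1(C_j^{n_j},\mathbb{Q})$ is a nonzero $E$-vector space and hence has $\mathbb{Q}$-dimension a positive multiple of $d=[E:\mathbb{Q}]$, rule out two or more pieces by exhibiting a commutative semisimple subalgebra of $\mathrm{End}^0(A)$ of $\mathbb{Q}$-dimension $2d$, and then trap $\mathrm{End}^0_E(A)$ inside $M_2(E)$ via the rank-$2$ free $E$-module $H^1(A,\mathbb{Q})$. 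The only step you assert without justification and that genuinely needs a word is the semisimplicity of the commutant $\mathrm{End}^0_E(A)$: a priori it could contain a nilpotent $x$, in which case $E[x]$ is a two-dimensional commutative $E$-subalgebra of $M_2(E)$ that is \emph{not} semisimple, and your CM-criterion (which explicitly demands a commutative \emph{semisimple} subalgebra) would not apply. The claim does hold and can be justified along the following lines: set $V=H^1(A,\mathbb{Q})$, $R=\mathrm{End}^0(A)$ (simple, since $A$ is now known to be isotypic), and $R'=\mathrm{End}_R(V)$; then $\mathrm{End}_{R'}(V)=R$ by the double-centralizer theorem, $R'$ and $E$ commute inside $\mathrm{End}_{\mathbb{Q}}(V)$, their image $T$ is a quotient of the semisimple $\mathbb{Q}$-algebra $R'\otimes_{\mathbb{Q}}E$ (we are in characteristic zero), and $\mathrm{End}^0_E(A)=C_R(E)=\mathrm{End}_T(V)$ is the endomorphism ring of a finite-length semisimple $T$-module, hence semisimple. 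With that filled in, your classification of semisimple $E$-subalgebras of $M_2(E)$ (noting that a quaternion division algebra over $E$ is ruled out for dimension reasons) and the resulting contradiction with the non-CM hypothesis go through cleanly.
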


\begin{proposition}\label{1.2}
Under the conditions of Proposition \ref{1.1}, if we assume further
that $A_{/\overline{\mathbb{Q}}}$ is simple,  then one of the
following four possibilities holds for $D=
\mathrm{End}^0(A_{/\overline{\mathbb{Q}}})$:

\begin{enumerate}
\item $E$ is a quadratic extension of a totally real field $Z$ and
$D$ is a totally indefinite division quaternion algebra over $Z$;
\item $E$ is a quadratic extension of a totally real field $Z$ and
$D$ is a totally definite division quaternion algebra over $Z$;
\item $E$ is a quadratic extension of a CM field $Z$ and
$D$ is a division quaternion algebra over $Z$;
\item $E=D$ and $E$ is totally real.
\end{enumerate}
\end{proposition}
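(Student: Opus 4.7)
The plan is to combine Albert's classification of the endomorphism algebra of a simple abelian variety equipped with a positive Rosati involution with the structural input of Proposition \ref{1.1}. Since $A_{/\overline{\mathbb{Q}}}$ is simple, $D := \mathrm{End}^0(A_{/\overline{\mathbb{Q}}})$ is a division algebra over $\mathbb{Q}$; and Proposition \ref{1.1} yields that the commutant of $E$ in $D$ equals $E$, so $E$ is a maximal subfield of $D$. Letting $Z$ be the center of $D$, $e_0 = [Z:\mathbb{Q}]$, and $m^2 = [D:Z]$, we obtain $Z \subseteq E$, $[E:Z] = m$, and the dimension identity $d = e_0 m$.

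I would then work through Albert's four types. In Type I ($D$ a totally real field), $m=1$ forces $E = D$ totally real of degree $d$, which is case (4). In Types II and III ($D$ a quaternion division algebra over a totally real center $Z$, totally indefinite or totally definite respectively), $m=2$ and $E$ is a quadratic extension of the totally real field $Z$, producing cases (1) and (2). In Type IV ($D$ a division algebra over a CM center $Z$), the requirement that $H_1(A_{/\overline{\mathbb{Q}}}, \mathbb{Q})$ be a free $D$-module of integer rank $2d/[D:\mathbb{Q}] = 2/m$ forces $m \in \{1,2\}$; the value $m=2$ delivers case (3).

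The main obstacle is eliminating the remaining subcase of Type IV with $m=1$, in which $D = Z = E$ would be a CM field of degree exactly $d$ and $H_1(A_{/\overline{\mathbb{Q}}}, \mathbb{Q}) \cong E^2$ as an $E$-module. Here I would exploit the Riemann form of a polarization, which defines a non-degenerate Hermitian pairing on $E^2$ with respect to complex conjugation on the CM field $E$. Combining this Hermitian structure with the action of $E$ on $\mathrm{Lie}(A_{/\overline{\mathbb{Q}}})$ (constrained by the Hodge decomposition via the relation $a_\sigma + a_{\bar\sigma} = 2$ on the multiplicities of $E$-eigencharacters), one produces additional endomorphisms enlarging $\mathrm{End}^0(A_{/\overline{\mathbb{Q}}})$ beyond $E$ to contain a commutative semisimple $\mathbb{Q}$-subalgebra of dimension $2d$. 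This makes $A_{/\overline{\mathbb{Q}}}$ of CM type, contradicting our standing hypothesis. Carrying out this last exclusion—extracting the extra endomorphisms from the Hermitian structure over a CM field while using the simplicity of $A$ to rule out isotypic decompositions—is where the delicate part of the argument lies.
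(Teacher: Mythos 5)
Your framework via Albert's classification and the computation forcing $m\in\{1,2\}$ in Type IV are correct, and this is the natural route. But the final step --- eliminating the Type IV subcase $m=1$, in which $D=Z=E$ is a CM field of degree $d$, by claiming the Hermitian form on $H_1(A_{/\overline{\mathbb{Q}}},\mathbb{Q})\cong E^2$ together with the Hodge multiplicity relation $a_\sigma+a_{\bar\sigma}=2$ ``produces additional endomorphisms'' making $A$ of CM type --- does not go through. When some $a_\sigma=a_{\bar\sigma}=1$, the Hermitian structure only records the existing $E$-action and a polarization; it yields no extra endomorphisms, and this is exactly the moving, non-CM situation. A concrete counterexample to the deduction you propose is the generic member of a one-parameter family of abelian surfaces with $K$-multiplication of signature $(1,1)$, $K$ imaginary quadratic: such an $A$ is simple, has $\mathrm{End}^0(A_{/\overline{\mathbb{Q}}})=K$ exactly (so $d=2=[K:\mathbb{Q}]$), and is not of CM type in the sense of the paper, since $\mathrm{End}^0(A_{/\overline{\mathbb{Q}}})$ has $\mathbb{Q}$-dimension $2<2d=4$.

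Thus the case $E=D$ a CM field of degree $d$ is not excluded by your argument, and it does not fall under any of the four listed possibilities (in case (4), $E$ is required to be totally real). Since the paper delegates the proof of this proposition to \cite{GME} Section 5.3.1, you should check whether the reference lists this as an additional case, imposes an extra hypothesis, or rules it out only later --- as Proposition \ref{1.4} does, by invoking the assumption that $A$ is defined over a totally real field with the $E$-action descending, which supplies genuine extra arithmetic input (a constraint on the trace of the Lie-algebra representation via \cite{Sh1}) not present in the hypotheses of Proposition \ref{1.2}.
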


\begin{remark}\label{1.3}
\begin{enumerate}
\item A quaternion algebra $D$ over a totally real field $Z$ is
called totally indefinite if for any real embedding
$\tau:Z\rightarrow\mathbb{R}$, the $\mathbb{R}$-algebra
$D\otimes_{Z,\tau}\mathbb{R}$ is isomorphic to the matrix algebra
$M_2(\mathbb{R})$; the quaternion algebra $D_{/Z}$ is called totally
definite if for any real embedding $\tau:Z\rightarrow\mathbb{R}$,
the $\mathbb{R}$-algebra $D\otimes_{Z,\tau}\mathbb{R}$ is isomorphic
to the Hamilton quaternion algebra $\mathbb{H}$.
\item From Proposition \ref{1.1}, we see that
$\mathrm{End}^0(A_{/\overline{\mathbb{Q}}})$ is always a central
simple algebra and $E$ is a maximal commutative subfield of
$\mathrm{End}^0(A_{/\overline{\mathbb{Q}}})$;
\item As remarked in \cite{GME},  case $2$  in Proposition $1.2$
cannot happen by \cite{Sh1}, Theorem 5(a) and Proposition 15.
\end{enumerate}
\end{remark}

\begin{proposition}\label{1.4}
Under the notations and assumptions in Proposition \ref{1.1}, assume
further that there exists a totally real field $k$ such that the
abelian variety $A_{/\overline{\mathbb{Q}}}$ is defined over $k$,
and the homomorphism $E\rightarrow
\mathrm{End}^0(A_{/\overline{\mathbb{Q}}})$ factors through
$\mathrm{End}^0(A_{/k})$. Then we can find a totally real field $F$
with degree $d$ over $\mathbb{Q}$, which can be embedded into
$D=\mathrm{End}^0(A_{/\overline{\mathbb{Q}}})$ as a subalgebra
sharing the identity with $D$.
\end{proposition}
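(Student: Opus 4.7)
The plan is to perform a case analysis on $D := \mathrm{End}^0(A_{/\bar{\mathbb{Q}}})$, applying Proposition \ref{1.2} to the simple factor of $A$. If $A \sim B^{e}$ with $B$ simple, then $D \cong M_e(D_B)$ with $D_B = \mathrm{End}^0(B_{/\bar{\mathbb{Q}}})$, and a totally real subfield $F_B \subset D_B$ of the correct $\mathbb{Q}$-degree yields a totally real $F \subset D$ of degree $d$ by adjoining a totally real étale extension of degree $e$ inside $M_e$. So I reduce to the case where $A = B$ is simple. In case $4$ of Proposition \ref{1.2}, $D = E$ is already totally real, and $F = E$ works.

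In case $1$, $D$ is a totally indefinite division quaternion algebra over a totally real field $Z$ of degree $d/2$. I will exhibit a totally real maximal subfield $F \supset Z$ with $[F : \mathbb{Q}] = d$. By the local-global principle for quaternion algebras (Albert--Brauer--Hasse--Noether), a quadratic extension $Z(\sqrt{u})/Z$ embeds in $D$ if and only if it splits $D$ at every ramified place. Since $D$ is totally indefinite its ramification is concentrated at a finite (and even) set of finite primes, so I am free to require $u \in Z$ to be totally positive while matching the prescribed local conditions at each finite ramified prime; the resulting $F = Z(\sqrt{u})$ is totally real of the desired degree.

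The main obstacle is case $3$, where $D$ is a quaternion algebra over a CM field $Z$ of degree $d/2$. A direct dimension count shows that every $\mathbb{Q}$-degree-$d$ subfield of $D$ is a maximal commutative subalgebra and therefore must contain the CM center $Z$, so no such subfield can be totally real; hence I must show that case $3$ does not occur under the hypotheses of Proposition \ref{1.4}. The crucial new input is the assumption $E \subset \mathrm{End}^0(A_{/k})$ with $k$ totally real, which forces the CM field $Z$ itself to act on $A$ over $k$. I plan to combine (i) the action on $D$ of complex conjugation $c \in \mathrm{Gal}(\bar{\mathbb{Q}}/k)$, which fixes $E \supset Z$ pointwise and is therefore trivial on $Z$, with (ii) the Rosati involution $\dagger$ attached to a $k$-rational polarization of $A$, which in case $3$ is of Albert Type IV and restricts to the nontrivial complex conjugation of $Z$. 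Tracking these two actions on the $Z$-equivariant Hodge decomposition of $H_1(A(\mathbb{C}), \mathbb{Q})$ and using positivity of $\dagger$, I expect to conclude that the reflex field of the CM type induced by $Z$ on $\mathrm{Lie}(A_{/\mathbb{C}})$ must embed in $k$; since the reflex field of a CM type is itself CM, this contradicts $k$ being totally real.

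The hardest part of the argument will be making this case-$3$ elimination rigorous: carefully relating the Galois action of $c$, the Rosati $\dagger$, and the polarized Hodge structure on $H_1(A(\mathbb{C}), \mathbb{Q})$, and invoking Shimura's descent results for abelian varieties with Type IV endomorphism algebra, to conclude that no such $A/k$ can be defined over a totally real $k$ together with its full $Z$-action.
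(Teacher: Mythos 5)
Your reduction to the simple case, your treatment of Case $4$, and your Case $1$ argument via Albert--Brauer--Hasse--Noether plus weak approximation are all sound, and they closely parallel the paper (which proves a single lemma directly for the central simple algebra $D=M_e(D_1)$ rather than first finding $F_B\subset D_1$ and then extending across $M_e$, but the two routes are equivalent).

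The genuine gap is in your Case $3$ elimination. You plan to argue that the ``CM type induced by $Z$ on $\mathrm{Lie}(A_{/\mathbb{C}})$'' has a reflex field which, being CM, cannot embed in the totally real $k$. But the $Z$-action on $\mathrm{Lie}(A_{/\mathbb{C}})$ need not be through a CM type, and in the relevant situation it provably is not. Write the analytic representation of $Z$ on $\mathrm{Lie}(B)\otimes\mathbb{C}$ with multiplicities $r_i,\,s_i$ for the conjugate pairs $\tau_i,\bar\tau_i$ as in the paper. Because $A$ and its $Z$-action are defined over the totally real $k$, complex conjugation $c\in\Gal(\bar{\mathbb{Q}}/k)$ forces the multiplicity set to be stable under $\tau_i\leftrightarrow\bar\tau_i$; equivalently, $\mathrm{Tr}(\rho(z))\in k\subset\mathbb{R}$ for every $z\in Z$, which together with $r_i+s_i=2$ gives $r_i=s_i=1$ for all $i$. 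This \emph{balanced} decomposition is precisely the negation of being a CM type: both $\tau_i$ and $\bar\tau_i$ occur, so there is no reflex field to exploit and no contradiction from your argument. What remains --- and what the paper does --- is to invoke Shimura's classification (Theorem $5(e)$ and Proposition $19$ in \cite{Sh1}), which shows that a simple polarized abelian variety with Albert Type IV endomorphism algebra and balanced analytic representation $r_i=s_i$ for all $i$ cannot exist. Your proposal omits this step, so the balanced case is left unaddressed and the Case $3$ exclusion is incomplete. The conjugation-and-Rosati bookkeeping you propose is the right way to get $r_i=s_i$, but you must then cite (or reprove) the relevant part of Shimura's theorem to reach a contradiction.
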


\begin{proof}
By Propositon \ref{1.1}, we can find a simple abelian variety
$B_{/\overline{\mathbb{Q}}}$ and an integer $e$ such that
$A_{/\overline{\mathbb{Q}}}$ is isogeneous to
$(B_{/\overline{\mathbb{Q}}})^e$. Hence we have an isomorphism of
simple algebras $\mathrm{End}^0(A_{/\overline{\mathbb{Q}}})\cong
M_e(\mathrm{End}^0(B_{/\overline{\mathbb{Q}}}))$,and $d=e\cdot d_1$,
where $d_1$ is the dimension of $B_{/\overline{\mathbb{Q}}}$. Since
any maximal commutative subfield of
$\mathrm{End}^0(A_{/\overline{\mathbb{Q}}})$ has degree $d$ over
$\mathbb{Q}$, any maximal commutative subfield of
$D_1=\mathrm{End}^0(B_{/\overline{\mathbb{Q}}})$ should have
dimension $d/e=d_1$. In other words, we can find number field $E_1$
of degree $d_1$ over $\mathbb{Q}$, which can be embedded into
$\mathrm{End}^0(B_{/\overline{\mathbb{Q}}})$ as a subalgebra. Since
$A_{/\overline{\mathbb{Q}}}$ does not have complex multiplication,
neither does $B_{/\overline{\mathbb{Q}}}$. In summary,
$B_{/\overline{\mathbb{Q}}}$ satisfies all the assumptions in
Proposition \ref{1.2}. Assume that
$\mathrm{End}^0(B_{/\overline{\mathbb{Q}}})$ is of type $3$ as in
Proposition \ref{1.2}, i.e.
$\mathrm{End}^0(B_{/\overline{\mathbb{Q}}})$ is a division
quaternion algebra over a CM field $Z$ and $[E_1:Z]=2$. Since
$d_1=[E_1:\mathbb{Q}]=2[Z:\mathbb{Q}]$, the degree $s$ of $Z$ over
$\mathbb{Q}$ equals to $\frac{d_1}{2}$. Since $Z$ is a CM field, we
can find $s'=\frac{s}{2}$ different embeddings
$\tau_i:Z\rightarrow\overline{\mathbb{Q}} ,i=1,...,s'$, such that
$\mathrm{Hom}_{\mathbb{Q}}(Z,\overline{\mathbb{Q}})=\{\tau_1,...,\tau_{s'},\bar{\tau}_1,...,\bar{\tau}_{s'}\}$,
where $\bar{\tau}_i$ is the complex conjugation of $\tau_i$ for
$i=1,...,s'$. Then we have an isomorphism
$$
\theta:D_1\otimes_\mathbb{Q}\mathbb{R}\cong
\prod_{\tau_i,i=1,...,s'} M_2(\mathbb{C}).
$$

Let $\pi_i$ be the composition
$$D_1\hookrightarrow
D_1\otimes_{\mathbb{Q}}\mathbb{R}\xrightarrow[]{\theta}\prod_{\tau_i,i=1,...,s'}
M_2(\mathbb{C})\xrightarrow[]{\pi_i} M_2(\mathbb{C}),$$
 where the  map $\pi_i$ is
the $i$-th projection,  for  $i=1,...,s'$. Let $\bar{\pi}_i$ be the
complex conjugation of $\pi_i$. Then
$\{\pi_1,...,\pi_{s'},\bar{\pi}_1,...,\bar{\pi}_{s'}\}$ are all the
absolutely irreducible (complex) representations of $D_1$ (up to
isomorphism).

On the other hand, we have a representation of $D_1$ by
$\rho_1:D_1\rightarrow
\mathrm{End}_{\mathbb{C}}(\mathrm{Lie}(B)\otimes_{\bar{\mathbb{Q}}}\mathbb{C})$.
Let $r_i$ (resp. $s_i$) be the multiplicity of $\pi_i$ (resp
$\bar{\pi}_i$) in $\rho_1$. Then for any $z\in Z$, the trace of
$\rho_1(z)$ is given by the formula:
$$
\mathrm{Tr}(\rho_1(z))=2\sum_{i=1}^{s'}(r_i\tau_i(z)+s_i\bar{\tau}_i(z)).
$$
Since
$\mathrm{Lie}(A_{/\bar{\mathbb{Q}}})\cong(\mathrm{Lie}(B_{/\bar{\mathbb{Q}}}))^e$,
we have the representation
$\rho:D\rightarrow\mathrm{End}_{\mathbb{C}}(\mathrm{Lie}(A)\otimes_{\bar{\mathbb{Q}}}\mathbb{C})$,
such that for any $z\in Z$,
$$
\mathrm{Tr}(\rho(z))=e\mathrm{Tr}(\rho_1(z))=2e\sum_{i=1}^{s'}(r_i\tau_i(z)+s_i\bar{\tau}_i(z)).
$$
Since $Z\subseteq E$ and the homomorphism
$E\rightarrow\mathrm{End}^0(A_{/\bar{\mathbb{Q}}})$ factors through
$\mathrm{End}^0(A_{/k})$, we have $\mathrm{Tr}(\rho(z))\in k$, for
any $z\in Z$. From \cite{Sh1} Section 4, we have $r_i+s_i=2$, for
all $i=1,...,s'$. Thus for each $i$, either $r_i=s_i=1$ or $r_i\cdot
s_i=0$. If $r_i\cdot s_i=0$ for at least one $i$, then
$\mathrm{Tr}(\rho(z))$ cannot lie in the totally real field $k$ for
all $z\in Z$ as $Z$ is assumed to be a CM field. Hence $r_i=s_i=1$
for all $i$. Then by \cite{Sh1} Theorem $5(e)$ and Proposition $19$,
this case cannot happen.

Combined with Remark \ref{1.3}(3),  we see that
$\mathrm{End}^0(B_{/\overline{\mathbb{Q}}})$ is either a totally
real field or a totally indefinite division algebra over a totally
real field. Then the existence of $F$ results from:

\begin{lemme}
Let $D$ be a central simple algebra over a totally real field $Z$
with $[D:Z]=d^2$. If for all real embeddings
$\tau:Z\rightarrow\mathbb{R}$, the $\mathbb{R}$-algebra
$D\otimes_{Z,\tau}\mathbb{R}$ is isomorphic to the matrix algebra
$M_d(\mathbb{R})$, then we can find a field extension $F/Z$ with
degree $d$ such that $F$ is totally real and can be embedded into
$D$ as an $Z$-subalgebra.
\end{lemme}
Proof of the lemma: We use an argument similar with the proof of
Lemma $1.3.8$ in \cite{CM}. It is enough to find a field extension
$F/Z$ with degree $d$ such that $F$ is totally real and splits
$D$(i.e $D\otimes_Z F\cong M_d(F)$).

Let $\Sigma$ be a non empty set of non-archimedean places of $Z$
containing all the finite places where $D$ does not split, and
$\Sigma_\infty$ be the set of archimedean places of $Z$. By the weak
approximation theorem, the natural map:
$$
Z\rightarrow\prod_{v\in \Sigma}Z_v\times
\prod_{v\in\Sigma_\infty}Z_v
$$
has dense image. Hence we can find a monic polynomial $f(X)\in Z[X]$
of degree $d$, such that it is sufficiently close to a monic
irreducible polynomial of degree $d$ over $Z_v$ for all $v\in
\Sigma$, and it is sufficiently close to a totally split polynomial
of degree $d$ over $\mathbb{R}$ for all $v\in \Sigma_\infty$. Set
$F=Z[X]/(f(X))$. Then $F/Z$ is a degree $d$ field extension such
that $F$ is totally real and for any $v\in \Sigma$, there is exactly
one place $w$ of $F$ lying over $v$ and hence $F_w/Z_v$ is a degree
$d$ extension of local fields.

We still need to check that $F$ splits $D$. Since $D\otimes_Z F$ is
a central simple algebra over $F$ and $F$ is a global field, it is
enough to prove that for any place $w$ of $F$ (archimedean and
non-archimedean), we have an isomorphism $D\otimes_Z F_w\cong
M_d(F_w)$. Let $v$ be the place of $Z$ over which $w$ lies.

If $w$ is archimedean, then $Z_v\cong F_w\cong\mathbb{R}$, and hence
\begin{equation}
D\otimes_Z F_w\cong (D\otimes_Z Z_v)\otimes_{Z_v}F_w\cong
M_d(\mathbb{R}),
\end{equation}
by our assumption on $D$.

If $w$ is non-archimedean and $v$ is not in  $\Sigma$,then
$D\otimes_Z Z_v$ is already isomorphic to the matrix algebra over
$Z_v$, so we are safe in this case.

Finally, assume that $w$ is non-archimedean and $v\in \Sigma$. As
$F_w/Z_v$ is a degree $d$ extension of local field, the base change
from $Z_v$ to $F_w$ induces a homomorphism of Brauer groups
$\mathrm{Br}(Z_v)\rightarrow \mathrm{Br}(F_w)$, which under the
isomorphism $\mathrm{Br}(Z_v)\cong \mathrm{Br}(F_w)\cong
\mathbb{Q}/\mathbb{Z}$ by local class field theory, is nothing but
multiplication by $d$. As $[D:Z]=d^2$, the order of $D\otimes_Z Z_v$
in $\mathrm{Br}(Z_v)$ is divisible by $d$. This implies that
$D\otimes_Z F_w$ represents the identity element in
$\mathrm{Br}(F_w)$; i.e.$D\otimes_Z F_w\cong M_d(F_w)$. Hence $F/Z$
is the desired extension.

\end{proof}
Hereafter we always work with the pair
$(A_{/\overline{\mathbb{Q}}},\iota:F\rightarrow
\mathrm{End}^0(A_{/\overline{\mathbb{Q}}}))$, where $F$ is a totally
real field with degree $d$ over $\mathbb{Q}$. Since the abelian
variety $A_{/\overline{\mathbb{Q}}}$ is projective, we can find a
number field $k$ such that $A$ is defined over $k$, and
$\mathrm{End}(A_{/\overline{\mathbb{Q}}}))=\mathrm{End}(A_{/k})$.
Let $\mathcal{O}_k$ be the integer ring of $k$, and for all prime
ideals $\mathfrak{P}$
 of $\mathcal{O}_k$ over some rational prime $p$, let
$\mathcal{O}_{(\mathfrak{P})}$ be localization of $\mathcal{O}_k$ at
the prime $\mathfrak{P}$ and
$\mathbb{F}_{\mathfrak{P}}=\mathcal{O}_k/\mathfrak{P}$ be its
residue field. As in \cite{H12}, we make the following assumption:

(NLL) the abelian variety $A_{/k}$ has good reduction at
$\mathfrak{P}$ and the reduction
$A_0=A\otimes_{\mathcal{O}_{\mathfrak{P}}}\mathbb{F}_{\mathfrak{P}}$
has nontrivial $\textit{p}$-torsion $\bar{\mathbb{F}}_p$-points.

Change the abelian variety $A_{/k}$ if necessary, we can assume that
$\iota$ gives a homomorphism
$\iota:\mathcal{O}_F\rightarrow\mathrm{End}(A_{/k})$. Let
$\bar{\mathbb{F}}_p$ be an algebraic closure of
$\mathbb{F}_{\mathfrak{P}}$. $W_p=W(\bar{\mathbb{F}}_p)$ is the ring
of Witt vectors of $\bar{\mathbb{F}}_p$. We have the decomposition
of Barsotti-Tate $\mathcal{O}_F$-modules:

$$A_0[p^\infty]=\bigoplus_{\mathfrak{p}|p}A_0[\mathfrak{p}^\infty].$$
Here $\mathfrak{p}$ ranges over the primes ideals of $\mathcal{O}_F$
over $p$ and for each $\mathfrak{p}$, let
$$A_0[\mathfrak{p}^\infty]=\lim_{\rightarrow}A_0[\mathfrak{p}^n]$$
be  the $\mathfrak{p}$-divisible Barsotti-Tate group of $A_0$. We
also define
$$T_\mathfrak{p}(A_0)=\lim_{\leftarrow}A_0[\mathfrak{p}^n](\bar{\mathbb{F}}_p)$$
as the $\mathfrak{p}$-divisible Tate module of $A_0$.

We say that a prime $\mathfrak{p}$ of $\mathcal{O}_F$ over $p$ is
ordinary if $A_0[\mathfrak{p}]$ has nontrivial
$\bar{\mathbb{F}}_p$-points, otherwise we say that $\mathfrak{p}$ is
local-local. When $\mathfrak{p}$ is ordinary and $p$ is unramified
in $k$, we have an exact sequence of Barsotti-Tate
$\mathcal{O}_{\mathfrak{p}}$-modules over $W_p$:

$$0\rightarrow\mu_{p^\infty}\otimes_{\mathbb{Z}_p}\mathcal{O}_{\mathfrak{p}}^\ast\rightarrow A[\mathfrak{p}^\infty]_{/W_p}\rightarrow F_\mathfrak{p}/\mathcal{O}_{\mathfrak{p}}\rightarrow 0.$$
Here $\mathcal{O}_{\mathfrak{p}}^\ast=\mathrm{Hom}_{\mathbb{Z}_p}(\mathcal{O}_{\mathfrak{p}},\mathbb{Z}_p)$ is the $\mathbb{Z}_p$-dual of $\mathcal{O}_{\mathfrak{p}}$.

Let $\Sigma^{ord}_p$ be the set of all ordinary primes of
$\mathcal{O}_F$ over $p$, and $\Sigma^{ll}_p$ be the set of all
local-local primes. Then the condition (NLL) is equivalent to the
fact that  $\Sigma^{ord}_p$ is not empty. Also we define:

$$A_0[p^\infty]^{ord}=\bigoplus_{\mathfrak{p}\in \Sigma_p^{ord}}A_0[\mathfrak{p}^\infty],A_0[p^\infty]^{ll}=\bigoplus_{\mathfrak{p}\in \Sigma_p^{ll}}A_0[\mathfrak{p}^\infty].$$

\section{Abelian varieties with real multiplication}

In this section we introduce the notion of abelian varieties with
real multiplication (AVRM for short). Then we prove that by a change
via an isogeny, we can make the abelian variety $A_{/k}$ considered
in the previous section into an abelian variety of this type.

Fix an invertible $\mathcal{O}_F$-module $L$, with a notion of
positivity $L_+$ on it: for each real embedding $\tau: F\rightarrow
\mathbb{R}$, we give an orientation on the line
$L\otimes_{\mathcal{O}_F,\tau}\mathbb{R}$. First we recall the
following definition in \cite{DP}:
\begin{definition}\label{2.1}
An $L$-polarized abelian scheme with real multiplication by
$\mathcal{O}_F$ is the triple $(A_{/S},\iota,\varphi)$ consisting of

\begin{enumerate}
\item $A_{/S}$ is an abelian scheme of relative dimension $d$;
\item $\iota:\mathcal{O}_F\rightarrow \mathrm{End}(A_{/S})$ is an algebra
homomorphism which gives $A_{/S}$ an $\mathcal{O}_F$-module
structure;
\item $\varphi:\underline{L}\rightarrow
\mathrm{Hom}^{Sym}_{\mathcal{O}_F}(A_{/S},A^t_{/S})$ is an
$\mathcal{O}_F$-linear morphism of sheaves of
$\mathcal{O}_F$-modules on the \'{e}tale site
$(Sch_{/S})_{\acute{e}t}$ of the category of $S$-schemes, such that
$\varphi$ sends totally positive elements of $L$ into polarizations
of $A_{/S}$, and the natural morphism
$\alpha:A\otimes_{\mathcal{O}_F}\underline{L}\rightarrow A^t$ is an
isomorphism. Here $A^t$ is the dual abelian scheme of $A$, and
$\underline{L}$ is the constant sheaf valued in $L$, and the sheaf
$\mathrm{Hom}^{Sym}_{\mathcal{O}_F}(A_{/S},A^t_{/S})$ is defined by
:

$(Sch_{/S})_{\acute{e}t} \ni T \mapsto
\mathrm{Hom}^{Sym}_{\mathcal{O}_F,T}(A_{T_{/T}},A^t_{T_{/T}})=\{\lambda:A_{T_{/T}}\rightarrow
A^t_{T_{/T}}|\lambda  $ is $  \mathcal{O}_F$-linear and
symmetric$\}$
\end{enumerate}
When $L=\mathfrak{c}$ is a fractional ideal of $\mathcal{O}_F$ with
the natural notion of positivity, we call the isomorphism $\alpha:
A\otimes_{\mathcal{O}_F}\mathfrak{c}\rightarrow A^t$ a
$\mathfrak{c}$-polarization of $A$ (see \cite{K1}1.0 for more
discussion). We also make the convention that for $c\in
\mathfrak{c}$, the morphism $\lambda(c):A\rightarrow A^t$ is the
corresponding symmetric $\mathcal{O}_F$-linear homomorphism.
\end{definition}

\begin{remark}\label{2.2}
The fppf abelian sheaf $A\otimes_{\mathcal{O}_F}\underline{L}$ is
the sheafication of the functor $$(Sch_{/S})_{\textrm{fppf}}\ni T
\mapsto A(T)\otimes_{\mathcal{O}_F}L.$$

This sheaf is represented by an abelian scheme over $S$, which is
denoted by $A\otimes_{\mathcal{O}_F}L$. Hence the isomorphism
$\alpha$ in (3) can be regarded as an isomorphism of abelian schemes
over $S$.
\end{remark}

\begin{definition}\label{2.3}
Let $A_{/S}$ be an abelian scheme over a scheme $S$ of relative
dimension $d$, and $\iota:\mathcal{O}_F\rightarrow
\mathrm{End}(A_{/S})$ be an algebra homomorphism. We say that the
pair $(A_{/S},\iota)$ satisfies the condition $(\mathrm{DP})$ if the
natural morphism
$\alpha:A\otimes_{\mathcal{O}_F}\mathrm{Hom}^{Sym}_{\mathcal{O}_F}(A_{/S},A^t_{/S})\rightarrow
A^t$ is an isomorphism. We say that the pair $(A_{/S},\iota)$
satisfies the condition $(\mathrm{RA})$ if Zariski locally on $S$,
$\mathrm{Lie}(A_{/S})$ is a free
$\mathcal{O}_S\otimes_\mathbb{Z}\mathcal{O}_F$-module of rank $1$.
\end{definition}

We remark here that the two conditions $(\mathrm{DP})$ and
$(\mathrm{RA})$ in Definition \ref{2.3} can be checked at each
geometric point of the base scheme $S$. When the pair
$(A_{/S},\iota)$ satisfies the condition $(\mathrm{RA})$, we come to
the notion of abelian schemes with real multiplication (by
$\mathcal{O}_F$) defined in \cite{Ra}. As explained in \cite{DP}2.9,
when $d_F$ is invertible on $S$, condition (DP) in Definition
\ref{2.3} implies (RA). For later use, we explain that condition
(RA) implies (DP) under some assumption on $S$ and by a suitable
choice of the pair $(L,L_+)$, we can make $A_{/S}$ be an
$L$-polarized abelian scheme with real multiplication by
$\mathcal{O}_F$ . First we need the following:

\begin{proposition}\label{2.4}
(\cite{Ra}$\textrm{1.17,1.18}$) Let $A_{/S}$ be an abelian scheme of
relative dimension $d$,and $\iota:\mathcal{O}_F\rightarrow
\mathrm{End}(A_{/S})$ be an algebra homomorphism. Then the \'{e}tale
sheaf $\mathrm{Hom}^{Sym}_{\mathcal{O}_F}(A_{/S},A^t_{/S})$ defined
above is locally constant with values in a projective
$\mathcal{O}_F$-module of rank $1$, endowed with a notion of
positivity corresponding to polarizations of $A_{/S}$. In
particular, when $S$ is normal and connected, this sheaf is
constant.
\end{proposition}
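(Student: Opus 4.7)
The plan is to split the statement into three parts: (a) identify the stalk at each geometric point of $S$ as an invertible $\mathcal{O}_F$-module equipped with a natural positivity structure; (b) establish local constancy of the resulting \'etale sheaf on $S$; and (c) deduce the constancy statement on a normal connected base.

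For (a), I fix a geometric point $\bar s\to S$ and choose any polarization $\lambda_0\colon A_{\bar s}\to A_{\bar s}^t$. The Rosati involution $\ast$ induced by $\lambda_0$ on $\mathrm{End}^0(A_{\bar s})$ is positive, and by the Albert classification any positive involution on a totally real field must be the identity, so $\ast$ restricts to the identity on $\iota(F)$. Hence $\lambda\mapsto \lambda_0^{-1}\lambda$ identifies $\mathrm{Hom}^{Sym}_{\mathcal{O}_F}(A_{\bar s},A_{\bar s}^t)\otimes \mathbb{Q}$ with the $\ast$-fixed part of the centralizer of $\iota(F)$ inside $\mathrm{End}^0(A_{\bar s})$. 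A direct inspection of the four cases of Proposition~\ref{1.2} (applied to the isotypic components of $A_{\bar s}$) shows this centralizer equals $F$ itself, so the $\ast$-fixed part is exactly $F$. The Hom module is therefore a finitely generated, torsion-free $\mathcal{O}_F$-submodule of a one-dimensional $F$-vector space, hence a fractional ideal, and thus projective of rank one over $\mathcal{O}_F$; its positive cone is transported from the ample cone of polarizations inside $\mathrm{Hom}^{Sym}(A_{\bar s},A_{\bar s}^t)$.

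For (b), I would invoke the standard fact that for any abelian schemes $A$ and $B$ over $S$, the functor $T\mapsto \mathrm{Hom}_T(A_T,B_T)$ is representable by an unramified, separated algebraic space over $S$, expressible as an increasing union of finite-type subschemes indexed by degree. The symmetric $\mathcal{O}_F$-linear condition cuts out a closed sub-functor that inherits this structure, and combined with the fibral description from (a), the rigidity lemma for homomorphisms of abelian schemes together with Serre--Tate deformation theory shows that such homomorphisms lift uniquely across infinitesimal thickenings. This upgrades the representing unramified algebraic space to an \'etale cover of $S$, so $\mathrm{Hom}^{Sym}_{\mathcal{O}_F}(A_{/S},A^t_{/S})$ is locally constant in the \'etale topology. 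This is the step I expect to be the most delicate: one must upgrade general unramified representability of $\mathrm{Hom}(A,A^t)$ to honest local constancy after imposing symmetry and $\mathcal{O}_F$-linearity, combining rigidity of abelian-scheme homomorphisms with Serre--Tate theory and the rank-one $\mathcal{O}_F$-module structure from (a).

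For (c), when $S$ is normal and connected, a locally constant \'etale sheaf is classified by a continuous action of the \'etale fundamental group $\pi_1(S,\bar s)$ on the geometric stalk. Here the stalk is a rank-one invertible $\mathcal{O}_F$-module equipped with a totally positive cone, so its positivity-preserving $\mathcal{O}_F$-linear automorphisms form the group $\mathcal{O}_{F,+}^\times$ of totally positive units, which is torsion-free (the only torsion units of a totally real field are $\pm 1$, and $-1$ is not totally positive). A continuous homomorphism from the profinite group $\pi_1(S,\bar s)$ to a discrete torsion-free group must have trivial image, so the $\pi_1$-action is trivial and the sheaf is constant.
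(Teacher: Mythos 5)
The paper does not prove this proposition; it simply cites \cite{Ra}~1.17--1.18 and, in the remark after, observes that Rapoport's proof does not actually use condition (RA). So there is no in-paper argument to compare against, and your proposal has to be judged on its own.

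There is a genuine gap in your step (a). You invoke Proposition~\ref{1.2} to analyze the centralizer of $F$ in $\mathrm{End}^0(A_{\bar s})$, but Proposition~\ref{1.2} requires $A_{\bar s}$ to be simple and \emph{without} complex multiplication. Nothing in the hypotheses of Proposition~\ref{2.4} excludes CM fibers, and in fact the positive-characteristic geometric fibers that drive this whole paper do have CM: by Lemma~\ref{4.1}, at an ordinary prime the reduction $A_{\mathfrak{P}/\bar{\mathbb{F}}_p}$ satisfies $\mathrm{End}^0_F(A_{\mathfrak{P}})\cong M$ for a CM quadratic extension $M/F$. In that situation the centralizer of $F$ in $\mathrm{End}^0(A_{\bar s})$ contains $M\supsetneq F$, so your assertion that ``this centralizer equals $F$ itself'' is literally false. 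What is true, and what you need, is that the \emph{Rosati-fixed} part of the centralizer is $F$; but this must be established uniformly, not by the case analysis of Proposition~\ref{1.2}, which simply does not apply to the fibers that matter. A robust way to get the rank-one statement is to avoid Albert's classification entirely: a symmetric $F$-linear homomorphism $A_{\bar s}\to A_{\bar s}^t$ corresponds to an $F\otimes\mathbb{Q}_\ell$-bilinear alternating form on the free rank-two module $T_\ell(A_{\bar s})\otimes\mathbb{Q}_\ell$, and $\textstyle\bigwedge^2_F$ of a rank-two module has rank one; alternatively one lifts $(A_{\bar s},\iota)$ to characteristic zero (as in \cite{Ra}~1.13, used elsewhere in Section~2) and reads off the rank from the complex uniformization.

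Step (b) has the right ingredients but is not yet a complete argument: unramifiedness of $\mathrm{Hom}$ plus Serre--Tate lifting gives formal \'etaleness of the representing space over $S$, but an \'etale algebraic space over $S$ is not automatically a covering space, so local constancy does not follow from \'etaleness alone. One also needs the specialization maps between the rank-one $\mathcal{O}_F$-modules to be \emph{surjective}, not merely injective (rigidity only gives injectivity); this is where the constancy of the rank from step (a), together with liftability of a generator (for instance a polarization in the positive cone, which lifts along the formal deformation), is genuinely used. Step (c), by contrast, is fine as stated, and your torsion-freeness argument in fact shows the sheaf is constant on any connected locally noetherian base, without needing normality.
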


Here we remark that in \cite{Ra}, the abelian scheme $A_{/S}$ is
assumed to satisfy condition $(\mathrm{RA})$. But this condition is
not necessary in the proof of the above proposition.

Now assume that $S$ is normal and connected (e.g. S is the spectrum
of the integer ring of a number field). Then from Proposition
\ref{2.4} we can find a projective $\mathcal{O}_F$-module $\mathcal
{M}$ of rank $1$  with a notion of positivity $\mathcal {M}_+$ and a
morphism $\varphi:\underline{\mathcal {M}}\rightarrow
\mathrm{Hom}^{Sym}_{\mathcal{O}_F}(A_{/S},A^t_{/S})$. To check this
$\varphi$ satisfies condition $(3)$ in Definition \ref{2.1}, we
still need to check that the morphism
$\alpha:A\otimes_{\mathcal{O}_F}\underline{\mathcal {M}}\rightarrow
A^t$ is an isomorphism.

We can assume that $S=\mathrm{Spec}(k)$, where $k$ is an separably
closed field and we want to prove that $\alpha$ is an isomorphism of
abelian varieties over $k$. Then it suffices to show that for any
rational prime $l$, there exists $0\neq \lambda\in \mathcal {M}$,
such that $\deg(\varphi(\lambda))$ is prime to $l$. In fact, for any
$\alpha\in \mathcal {M}$, we have a natural morphism $A\rightarrow
A\otimes_{\mathcal{O}_F}\mathcal {M}$ whose effect on $R$-valued
points is given by the formula ($R$ is an $k$-algebra):
$$A(R)\ni a \mapsto
a\otimes_{\mathcal{O}_F}\lambda\in
A(R)\otimes_{\mathcal{O}_F}\mathcal {M}.$$
 The composition of this
morphism with $\alpha$ is $\varphi(\lambda)$. Hence
$\deg(\alpha)|\deg(\varphi(\lambda))$. In particular, $\deg(\alpha)$
is prime to $l$. As $l$ is arbitrary, $\deg(\alpha)=1$ and hence
$\alpha$ is an isomorphism.

To prove the existence of $\lambda$, we apply an argument in
\cite{Goren} Chapter $3$ Section $5$: when $char(k)>0$, by
\cite{Ra}1.13, we can always lift the pair $(A_{/k},\iota)$ to an
abelian scheme with real multiplication
$(\tilde{A}_{/W(k)},\tilde{\iota})$ satisfying $(\mathrm{RA})$. Here
$W(k)$ is the ring of Witt vectors of $k$. Hence we can assume that
$char(k)=0$. By Lefschetz principle, we can assume that $k$ is the
complex filed. Then the existence of $\lambda$ follows from the
complex uniformization \cite{Goren}Chapter $2$ Section $2.2$.

The following proposition tells us that when $S$ is a scheme of
characteristic $0$, condition $(\mathrm{RA})$ and hence
$(\mathrm{DP})$ is automatically satisfied.

\begin{proposition}\label{2.5}
Let $k$ be a field of characteristic $0$, $A_{/k}$ be an abelian
variety of dimension $d$, and $\iota:\mathcal{O}_F\rightarrow
\mathrm{End}(A_{/k})$ be an algebra homomorphism. Then
$\mathrm{Lie}(A_{/k})$ is a free
$\mathcal{O}_F\otimes_{\mathbb{Z}}k$-module of rank 1.
\end{proposition}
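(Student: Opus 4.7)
My plan is to reduce to the case $k = \mathbb{C}$ by the Lefschetz principle and then use the complex uniformization of $A$. Since $\mathrm{char}(k) = 0$, the structure ring $R_k := \mathcal{O}_F \otimes_{\mathbb{Z}} k = F \otimes_{\mathbb{Q}} k$ is an étale $k$-algebra, hence a finite product of field extensions of $k$. Over such a ring a finitely generated module is free of rank one if and only if its rank at each simple factor is exactly one; since $\dim_k \mathrm{Lie}(A) = d = \dim_k R_k$, the question reduces to checking that each idempotent component of $R_k$ acts on $\mathrm{Lie}(A)$ with multiplicity one.

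For the Lefschetz step, I would descend the pair $(A, \iota)$ to a subfield $k_0 \subset k$ that is finitely generated over $\mathbb{Q}$ --- only finitely many generators of $\mathcal{O}_F$ as a $\mathbb{Z}$-module need be recorded, and $A$ is defined by finitely much polynomial data. Such a $k_0$ embeds into $\mathbb{C}$. Base change is compatible with the formation of both $\mathrm{Lie}(A)$ and $R_k$, and freeness of rank one for a finitely generated module over a product of fields is both preserved and reflected by any faithfully flat field extension (the rank at each maximal ideal is base-change invariant). So it is enough to prove the proposition for $k = \mathbb{C}$, and then base change back to $k_0$ and forward to $k$.

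Over $\mathbb{C}$, the uniformization presents $A(\mathbb{C})$ as $\mathrm{Lie}(A)/H_1(A, \mathbb{Z})$, with $H_1(A, \mathbb{Z})$ an $\mathcal{O}_F$-stable lattice of $\mathbb{Z}$-rank $2d$; hence $H_1(A, \mathbb{Q})$ is a free $F$-module of rank $2$. Tensoring with $\mathbb{R}$ decomposes $H_1(A, \mathbb{R}) = \bigoplus_{\tau} H_\tau$ along the $d$ real embeddings $\tau \colon F \hookrightarrow \mathbb{R}$, each $H_\tau$ a real $2$-plane on which $F$ acts through $\tau$. Under the identification $H_1(A, \mathbb{R}) = \mathrm{Lie}(A)$, the complex multiplication $J$ on $\mathrm{Lie}(A)$ commutes with $\iota(\mathcal{O}_F)$, hence preserves each $H_\tau$ and turns it into a complex line on which $F$ acts via $\tau$. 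The resulting decomposition $\mathrm{Lie}(A) = \bigoplus_\tau H_\tau$ realises $\mathrm{Lie}(A)$ as the regular representation of $F \otimes_\mathbb{Q} \mathbb{C}$, hence free of rank one over $R_{\mathbb{C}}$.

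The only step where any care is needed is the descent after the Lefschetz reduction, and even there the verification is routine linear algebra over a product of fields. The geometric content is concentrated in the complex-analytic step, which is the classical observation that the rational homology of an abelian variety with a large enough ring of endomorphisms is a free module over its endomorphism field; I do not expect any serious obstacle.
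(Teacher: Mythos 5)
Your proof is correct and follows essentially the same route as the paper: reduce to $k=\mathbb{C}$ by the Lefschetz principle and then use the complex uniformization. The paper delegates the complex-analytic step to Goren (Chapter~2, Corollary~2.6), whereas you spell out the decomposition $H_1(A,\mathbb{R}) = \bigoplus_\tau H_\tau$ and the compatibility of the complex structure $J$ with the $\mathcal{O}_F$-action explicitly, but the underlying argument is the same.
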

\begin{proof}
By Lefschetz principle we can again work over the complex field.
Then the result follows from \cite{Goren} Chapter 2, Corollary 2.6.
\end{proof}

Now we consider the object considered in Section $1$. Let $k$ be a
number field, $A_{/k}$ be an abelian variety of dimension $d$
satisfying the condition $(\mathrm{NLL})$, and $\iota: F\rightarrow
\mathrm{End}^0(A_{/k})$ be an algebra homomorphism. We want to prove
that there is a $\mathfrak{c}$-polarized abelian variety
$A'_{/\mathcal{O}_k}$ with real multiplication by $\mathcal{O}_F$
which is isogenous to $A_{/k}$.

We can find an order $\mathcal{O}$ in $F$ which is mapped into
$\mathrm{End}(A)$ under $\iota$. By Serre's Tensor construction
(\cite{CM}1.7.4.), we can find an isogeny $f: A\rightarrow A'$ over
$k$, and the induced isomorphism $\mathrm{End}^0(A_{/k})\rightarrow
\mathrm{End}^0(A'_{/k})$ carries $\mathcal{O}_F\subseteq
\mathrm{End}^0(A_{/k})$ into $\mathrm{End}(A'_{/k})$. Hence we have
an algebra homomorphism $\iota':\mathcal{O}_F\rightarrow
\mathrm{End}(A'_{/k})$. By our assumption, $A_{/k}$ has good
reduction at the prime $\mathfrak{P}$ of $\mathcal{O}_k$. By the
criterion of N\'{e}ron-Ogg-Shafarevich (\cite{ST} Section 1
Corollary 1), $A'_{/k}$ also has good reduction at $\mathfrak{P}$,
and hence can be extended to an abelian scheme
$A'_{/\mathcal{O}_{(\mathfrak{P})}}$ (recall that
$\mathcal{O}_{(\mathfrak{P})}$ is the localization of
$\mathcal{O}_k$ at the prime $\mathfrak{P}$). Since
$\mathcal{O}_{(\mathfrak{P})}$ is a normal domain, by a lemma of
Faltings (see \cite{Faltings} Lemma $1$),  the restriction to the
generic fiber induces a bijection
$$\mathrm{End}(A'_{/\mathcal{O}_{(\mathfrak{P})}})\rightarrow \mathrm{End}(A'_{/k}).$$
So we have an algebra homomorphism $\mathcal{O}_F\rightarrow
\mathrm{End}(A'_{/\mathcal{O}_{(\mathfrak{P})}})$, which is again
denoted by $\iota'$.

From Proposition \ref{2.4}, the \'{e}tale sheaf
$\mathrm{Hom}^{Sym}_{\mathcal{O}_F}(A'_{/\mathcal{O}_{(\mathfrak{P})}},A'^t_{/\mathcal{O}_{(\mathfrak{P})}})$
is a constant sheaf $\underline{\mathfrak{c}}$ for some fractional
ideal $\mathfrak{c}$, with the natural notion of positivity
$\mathfrak{c}_+$. Thus we have a natural isomorphism
$\varphi:\underline{\mathfrak{c}}\rightarrow
\mathrm{Hom}^{Sym}_{\mathcal{O}_F}(A'_{/\mathcal{O}_{(\mathfrak{P})}},A'^t_{/\mathcal{O}_{(\mathfrak{P})}})$
which sends totally positive elements of $\mathfrak{c}$ to
polarizations of $A'_{/\mathcal{O}_{(\mathfrak{P})}}$. We still need
to check that the natural morphism $\alpha:
A'\otimes_{\mathcal{O}_F}\underline{\mathfrak{c}}\rightarrow A'^t$
is an isomorphism over $\mathcal{O}_{(\mathfrak{P})}$. As
$char(k)=0$, by Proposition \ref{2.5}, $\alpha$ is an isomorphism at
the generic fiber of $\mathcal{O}_{(\mathfrak{P})}$. Hence $\alpha$
is an isomorphism again by Faltings lemma.

In summary, we have:

\begin{proposition}\label{2.6}
Let $A_{/k}$ be an abelian variety of dimension $d$ satisfying the
condition $(\mathrm{NLL})$ in Section $1$, and $\iota: F\rightarrow
\mathrm{End}^0(A_{/k})$ be an algebra homomorphism. Then we can find
a fractional ideal $\mathfrak{c}$ and an $\mathfrak{c}$-polarized
abelian scheme $(A'_{/\mathcal{O}_{(\mathfrak{P})}},\iota',\varphi)$
with real multiplication by $\mathcal{O}_F$ such that $A_{/k}$ is
$k$-isogenous to $A'_{/k}$.
\end{proposition}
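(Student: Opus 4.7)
The plan is to construct $A'$ in three stages, working successively at the level of isogeny over $k$, integral extension to $\mathcal{O}_{(\mathfrak{P})}$, and the polarization module. First I would replace $A_{/k}$ by an isogenous abelian variety on which the full ring $\mathcal{O}_F$ (and not merely some order of $F$) acts integrally: some order $\mathcal{O} \subseteq \mathcal{O}_F$ already lands in $\mathrm{End}(A_{/k})$ under $\iota$, so Serre's tensor construction produces an isogeny $f : A \to A'$ over $k$ together with a genuine ring homomorphism $\iota' : \mathcal{O}_F \to \mathrm{End}(A'_{/k})$.

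Next I would spread the pair $(A'_{/k}, \iota')$ out over the integral base. Since $A_{/k}$ has good reduction at $\mathfrak{P}$ by condition $(\mathrm{NLL})$ and good reduction is an isogeny invariant by the N\'{e}ron--Ogg--Shafarevich criterion, $A'_{/k}$ also has good reduction at $\mathfrak{P}$ and extends uniquely to an abelian scheme $A'_{/\mathcal{O}_{(\mathfrak{P})}}$. A first application of Faltings's lemma --- available because $\mathcal{O}_{(\mathfrak{P})}$ is a normal domain --- identifies $\mathrm{End}(A'_{/\mathcal{O}_{(\mathfrak{P})}})$ with $\mathrm{End}(A'_{/k})$, which promotes $\iota'$ uniquely to an integral $\mathcal{O}_F$-action on the extended scheme.

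For the polarization data, I would apply Proposition \ref{2.4} to the normal connected base $\mathcal{O}_{(\mathfrak{P})}$: the \'{e}tale sheaf $\mathrm{Hom}^{Sym}_{\mathcal{O}_F}(A'_{/\mathcal{O}_{(\mathfrak{P})}}, A'^t_{/\mathcal{O}_{(\mathfrak{P})}})$ is constant with values in a projective rank-one $\mathcal{O}_F$-module carrying a notion of positivity, and this module can be realized as a fractional ideal $\mathfrak{c}$ with its natural positivity $\mathfrak{c}_+$. This yields a tautological map $\varphi : \underline{\mathfrak{c}} \to \mathrm{Hom}^{Sym}_{\mathcal{O}_F}(A'_{/\mathcal{O}_{(\mathfrak{P})}}, A'^t_{/\mathcal{O}_{(\mathfrak{P})}})$ sending totally positive elements to polarizations.

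The main remaining obstacle is to show that the natural morphism $\alpha : A' \otimes_{\mathcal{O}_F} \underline{\mathfrak{c}} \to A'^t$ is an isomorphism over $\mathcal{O}_{(\mathfrak{P})}$, since this is the one requirement of Definition \ref{2.1} that does not follow formally from the previous steps. On the generic fiber, Proposition \ref{2.5} supplies condition $(\mathrm{RA})$ automatically in characteristic zero, and the degree bookkeeping recorded after Proposition \ref{2.4} (namely that $\deg \alpha$ divides $\deg \varphi(\lambda)$ for every nonzero $\lambda \in \mathfrak{c}$, so $\deg \alpha$ is coprime to every rational prime $\ell$) forces $\alpha$ to be an isomorphism over $k$. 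A second invocation of Faltings's lemma then promotes $\alpha$ from an isomorphism on the generic fiber to an isomorphism of abelian schemes over the full normal base $\mathcal{O}_{(\mathfrak{P})}$, completing the construction.
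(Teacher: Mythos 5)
Your proposal follows the paper's proof step for step: Serre's tensor construction to make the $\mathcal{O}_F$-action integral, Néron--Ogg--Shafarevich to carry good reduction across the isogeny, a first application of Faltings's lemma to extend $\iota'$ over $\mathcal{O}_{(\mathfrak{P})}$, Proposition \ref{2.4} to produce the fractional ideal $\mathfrak{c}$ and the map $\varphi$, and Proposition \ref{2.5} together with the degree argument plus a second use of Faltings's lemma to get that $\alpha$ is an isomorphism over the whole base. One small caution on phrasing: the parenthetical ``$\deg\alpha$ divides $\deg\varphi(\lambda)$ for every nonzero $\lambda$, so $\deg\alpha$ is coprime to every $\ell$'' suppresses the essential step (present in the paper's discussion after Proposition \ref{2.4}) of actually \emph{producing}, for each $\ell$, a $\lambda$ with $\deg\varphi(\lambda)$ prime to $\ell$ via complex uniformization --- divisibility alone gives nothing --- but you are evidently citing that discussion, so this is a paraphrase issue rather than a gap.
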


\begin{remark}\label{2.9}
Let $A_{/S}$ be an abelian scheme of relative dimension $d$ and
$\iota:\mathcal{O}_F\rightarrow \mathrm{End}(A_{/S})$ be an algebra
homomorphism. By a similar argument as above, we see that if $S$ is
an integral normal scheme and the generic fiber of $S$ is of
characteristic $0$, then the pair $(A_{/S},\iota)$ must satisfy the
condition $(\mathrm{DP})$.
\end{remark}
For later discussion, we need the following:
\begin{lemme}\label{2.10}
Let $A_{/S},A'_{/S}$ be two abelian schemes of relative dimension
$d$, and $\iota:\mathcal{O}_F\rightarrow
\mathrm{End}(A_{/S}),\iota':\mathcal{O}_F\rightarrow
\mathrm{End}(A'_{/S})$ be two algebra homomorphisms. Suppose that
there exists an $\mathcal{O}_F$-linear \'{e}tale homomorphism of
abelian schemes $f:A\rightarrow A'$. If the pair $(A_{/S},\iota)$
satisfies the condition $(\mathrm{DP})$, so does $(A'_{/S},\iota')$.
\end{lemme}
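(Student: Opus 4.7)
The plan is to transfer (DP) from $A$ to $A'$ via a natural pullback map on symmetric Hom sheaves. Write $L = \mathrm{Hom}^{Sym}_{\mathcal{O}_F}(A_{/S}, A^t_{/S})$ and $L' = \mathrm{Hom}^{Sym}_{\mathcal{O}_F}(A'_{/S}, A'^t_{/S})$; by Proposition \ref{2.4} these are locally constant sheaves with values in invertible $\mathcal{O}_F$-modules. Define $\varphi \colon L' \to L$ by $\lambda' \mapsto f^t \circ \lambda' \circ f$; this is $\mathcal{O}_F$-linear and injective (if $f^t\lambda' f = 0$, the image of $\lambda' \circ f$ lies in the finite subgroup $\ker f^t$, so $\lambda' \circ f = 0$, and then $\lambda' = 0$ by surjectivity of $f$). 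Evaluating on $T$-valued points shows that the diagram
$$
\begin{CD}
A \otimes_{\mathcal{O}_F} L' @>{\mathrm{id} \otimes \varphi}>> A \otimes_{\mathcal{O}_F} L @>{\alpha_A}>> A^t \\
@V{f \otimes \mathrm{id}}VV @. @| \\
A' \otimes_{\mathcal{O}_F} L' @>{\alpha_{A'}}>> A'^t @>{f^t}>> A^t
\end{CD}
$$
commutes, since both routes send $a \otimes \lambda'$ to $f^t(\lambda'(f(a)))$.

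Since (DP) may be verified on geometric fibers (as remarked after Definition \ref{2.3}), I would reduce to $S = \mathrm{Spec}(k)$ with $k$ algebraically closed, so that every corner is an abelian variety of dimension $d = [F:\mathbb{Q}]$ and every arrow is an isogeny. With $\deg \alpha_A = 1$ by the hypothesis (DP) for $A$, multiplicativity of degrees in the square gives
$$
\deg(\mathrm{id}\otimes\varphi) = (\deg f)^2 \cdot \deg(\alpha_{A'}).
$$
Trivializing $L \cong L' \cong \mathcal{O}_F$ étale-locally, $\varphi$ becomes multiplication by some $c \in \mathcal{O}_F$ and $\mathrm{id}\otimes\varphi$ becomes $[c]\colon A \to A$, of degree $\mathrm{Nm}_{F/\mathbb{Q}}(c)^2$. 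The proof is thereby reduced to the norm identity $\mathrm{Nm}_{F/\mathbb{Q}}(c) = \deg f$, which would force $\deg(\alpha_{A'}) = 1$ and hence $\alpha_{A'}$ to be an isomorphism.

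The main obstacle is this norm identity. I would deduce it by comparing $\deg(f^t \lambda'_0 f) = (\deg f)^2\deg(\lambda'_0)$ with $\deg(c\lambda_0) = \mathrm{Nm}_{F/\mathbb{Q}}(c)^2\deg(\lambda_0)$ for matched local generators $\lambda_0 \in L$, $\lambda'_0 \in L'$, and showing that $\deg(\lambda_0) = \deg(\lambda'_0)$ for an appropriate choice. The étaleness of $f$ is essential here: it guarantees that $K = \ker f$ is a finite étale $\mathcal{O}_F$-stable subgroup scheme of order $\deg f$, whose controlled interaction with both Serre tensor constructions $A \otimes_{\mathcal{O}_F} L'$ and $A' \otimes_{\mathcal{O}_F} L'$—fitting them into a common short exact sequence—pins down the relation between the two degrees and completes the verification of (DP) for $A'$.
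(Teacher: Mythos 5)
Your reduction is correct as far as it goes: the pullback $\varphi\colon L'\to L$, $\lambda'\mapsto f^t\circ\lambda'\circ f$, the commuting square, and the degree relation $\deg(\mathrm{id}\otimes\varphi)=(\deg f)^2\,\deg(\alpha_{A'})$ all hold, and they do reduce the lemma to showing $\deg(\mathrm{id}\otimes\varphi)=(\deg f)^2$. But you never prove this; you only announce a plan, and the plan begs the question. If $L$ is free with a generator $\lambda_0$ (the case your "trivialization" envisions), then $\alpha_A$ is identified with $\lambda_0\colon A\to A^t$ under $A\cong A\otimes_{\mathcal{O}_F}\mathcal{O}_F$, so (DP) for $A$ is exactly the statement $\deg(\lambda_0)=1$; in the same way (DP) for $A'$ is exactly $\deg(\lambda'_0)=1$. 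The intermediate claim "$\deg(\lambda_0)=\deg(\lambda'_0)$ for an appropriate choice" is therefore equivalent to the conclusion you are trying to reach, and nothing in the final paragraph ("controlled interaction," "common short exact sequence") supplies an actual mechanism. Worse, the degree relation you establish holds verbatim for any $\mathcal{O}_F$-linear isogeny $f$; if a purely numerical argument of this shape closed the gap it would prove the lemma with étaleness dropped, and that is false — (DP) is not preserved by arbitrary isogenies in characteristic $p$. (A further technical wrinkle: over $S=\mathrm{Spec}(k)$ one cannot trivialize $L$ and $L'$ "\'etale-locally"; they are fixed rank-one projective $\mathcal{O}_F$-modules that may be nonfree, so the scalar $c$ and its norm are not even well-defined without extra bookkeeping.)

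The paper takes a different route which explains where étaleness is genuinely used. After reducing to $S=\mathrm{Spec}(k)$, $k$ separably closed, the case $\mathrm{char}(k)=0$ is automatic by Proposition \ref{2.5}. When $\mathrm{char}(k)=p>0$, lift $(A_{/k},\iota)$ to $(\tilde A_{/W},\tilde\iota)$ over a complete normal local domain $W$ with residue field $k$ and characteristic-zero fraction field. Because $f$ is étale and $\mathcal{O}_F$-linear, $C=\ker f$ is a finite étale $\mathcal{O}_F$-stable subgroup, hence lifts to an étale $\mathcal{O}_F$-subgroup $\tilde C_{/W}\subset\tilde A_{/W}$; the quotient $\tilde A'=\tilde A/\tilde C$ then lifts $(A'_{/k},\iota')$. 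Remark \ref{2.9} (an integral normal base with characteristic-zero generic fiber forces (DP)) finishes the argument. The moral is that (DP) costs nothing in characteristic zero, so the real content is a deformation statement — lifting the étale kernel — not a degree count.
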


\begin{proof}
Without loss of generality, we can assume that $S=\mathrm{Spec}(k)$
for some separably closed field $k$. If $char(k)=0$, then
$(A'_{/S},\iota')$ satisfies (DP) automatically by Proposition
\ref{2.5}. So we can assume that $char(k)=p>0$. From the discussion
of \cite{Goren} Page $100-101$, the pair $(A_{/k},\iota)$ can be
lifted to characteristic $0$; i.e., there exist:
\begin{enumerate}
\item a normal local domain $W$ with maximal ideal $\mathfrak{m}$ and
residue field $k$ such that the quotient  field of $W$ is of
characteristic $0$;
\item an abelian scheme $\tilde{A}_{/W}$ with an
$\mathcal{O}_F$-action $\tilde{\iota}:\mathcal{O}_F\rightarrow
\mathrm{End}(\tilde{A}_{/W})$ such that $(A_{/k},\iota)$ is
isomorphic the the pull back of $(\tilde{A}_{/W},\tilde{\iota})$
under the natural morphism $\mathrm{Spec}(k)\rightarrow
\mathrm{Spec}(W)$.
\end{enumerate}
Replacing $W$ by its $\mathfrak{m}$-adic completion if necessary, we
can assume that $W$ is complete.

Since $f:A\rightarrow A'$ is \'{e}tale and $\mathcal{O}_F$-linear,
$C=ker(f)$ is a finite \'{e}tale $\mathcal{O}_F$-submodule of
$A_{/k}$. Then we can lift $C$ to an \'{e}tale
$\mathcal{O}_F$-submodule $\tilde{C}_{/W}$ of $\tilde{A}_{/W}$. Let
$\tilde{A}'_{/W}$ be the quotient of $\tilde{A}_{/W}$ by
$\tilde{C}_{/W}$, with the natural homomorphism
$\tilde{\iota}':\mathcal{O}_F\rightarrow
\mathrm{End}(\tilde{A}'_{/W})$ induced from $\tilde{A}_{/W}$. By the
above construction it is easy to see that
$(\tilde{A}'_{/W},\tilde{\iota}')$ lifts $(A'_{/k},\iota')$. Then
from Remark \ref{2.9}, $(A'_{/k},\iota')$ satisfies (DP).
\end{proof}

\section{Hilbert modular Shimura variety}

Fix a finite set of primes $\Xi$. Set
\begin{displaymath}
\mathbb{Z}_{(\Xi)}=\{\frac{m}{n}\in \mathbb{Q}|m,n\in
\mathbb{Z},(n,p)=1, \forall p\in \Xi\}.
\end{displaymath}
Then define
$\mathcal{O}_{(\Xi)}=\mathcal{O}_F\otimes_\mathbb{Z}\mathbb{Z}_{(\Xi)}$,
and $\mathcal{O}_{(\Xi),+}^\times$ as the set of totally positive
units in $\mathcal{O}_{(\Xi)}$. Also we define:

$$\widehat{\mathbb{Z}}=\underleftarrow{\lim} \mathbb{Z}/n\mathbb{Z},\ \widehat{\mathbb{Z}}^{(\Xi)}=\underleftarrow{\lim}  \mathbb{Z}/n\mathbb{Z},
\ \mathbb{Z}_{\Xi}=\prod_{l\in\Xi}\mathbb{Z}_l,$$ where in the first
inverse limit, $n$ ranges over all positive numbers, and in the
second inverse limit, $n$ ranges over all positive integers prime to
$\Xi$. Let $\mathbb{A}$ be the adele ring of $\mathbb{Q}$. Then set
\begin{displaymath}
\mathbb{A}^{(\Xi\infty)}=\{x\in\mathbb{A}|x_l=x_\infty=0, \forall
l\in \Xi\},
\end{displaymath}
and
$F_{\mathbb{A}^{(\Xi\infty)}}=F\otimes_\mathbb{Q}\mathbb{A}^{(\Xi\infty)}$.

Define the algebraic group
$G=\mathrm{Res}_{\mathcal{O}_F/\mathbb{Z}}(GL(2))$ and let $Z$ be
its center. $K$ is an open compact subgroup of
$G(\widehat{\mathbb{Z}})$ which is maximal at $\Xi$, in the sense
that $K=G(\mathbb{Z}_{\Xi})\times K^{(\Xi)}$, where
$$K^{(\Xi)}=\{x\in K|x_p=1 \text{~for all~} p\in \Xi\}.$$

\begin{definition}\label{def3.1}
Define the functor $\mathscr{E'}_K^{(\Xi)}:
Sch_{/\mathbb{Z}_{(\Xi)}}\rightarrow Sets$, such that for each
$\mathbb{Z}_{(\Xi)}$-scheme $S$,
$\mathscr{E'}_K^{(\Xi)}(S)=[(A_{/S},\iota,\bar{\lambda},\bar{\eta}^{(\Xi)})]$.
Here $[(A_{/S},\iota,\bar{\lambda},\bar{\eta}^{(\Xi)})]$ is the set
of isomorphism classes of quadruples
$(A_{/S},\iota,\bar{\lambda},\bar{\eta}^{(\Xi)})$ consisting of:
\begin{enumerate}
\item an abelian scheme $A_{/S}$ of relative dimension $d$;
\item an algebra homomorphism
$\iota:\mathcal{O}_F\rightarrow\mathrm{End}(A_{/S})$ such that the
pair $(A_{/S},\iota)$ satisfies the condition $(\mathrm{DP})$ (see
Definition \ref{2.3});
\item a subset $\{\lambda\circ\iota(b): b\in
\mathcal{O}_{(\Xi),+}^\times\}$ of
$\mathrm{Hom}(A_{/S},A^t_{/S})\otimes_\mathbb{Z}\mathbb{Q}$, where
$\lambda: A_{/S}\rightarrow A^t_{/S}$ is an $\mathcal{O}_F$-linear
polarization of $A$, whose degree is prime to $\Xi$;
\item  $\bar{\eta}^{(\Xi)}$ is a $K$-rational level structure of the abelian scheme $A_{/S}$ (see Remark \ref{3.2.5} below).
\end{enumerate}
An isomorphism from one quadruple
$(A_{/S},\iota,\bar{\lambda},\bar{\eta}^{(\Xi)})$ to another
$(A'_{/S},\iota',\bar{\lambda}',\bar{\eta}'^{(\Xi)})$ is an element
$f\in
\mathrm{Hom}(A_{/S},A'_{/S})\otimes_\mathbb{Z}\mathbb{Z}_{(\Xi)}$
whose degree is prime to $\Xi$ such that:
\begin{enumerate}
\item $f\circ\iota(b)=\iota'(b)\circ f$ for all $b\in
\mathcal{O}_F$;
\item $f^t\circ \bar{\lambda}'\circ f=\bar{\lambda}$ as subsets of
$\mathrm{Hom}(A_{/S},A^t_{/S})\otimes_\mathbb{Z}\mathbb{Q}$;
\item we have the equality of level stuctures: $V^{(\Xi)}(f)(\bar{\eta}^{(\Xi)})=\bar{\eta}'^{(\Xi)}$ .
\end{enumerate}
\end{definition}

Now we choose a representative $I=\{\mathfrak{c}\}$ of fractional
ideals in the finite class group
$Cl(K)=(F_{\mathbb{A}^{(\Xi\infty)}})^\times/\mathcal{O}_{(\Xi),+}^\times
\det(K)$. For each $\mathfrak{c}$, fix an $\mathcal{O}_F$-lattice
$L_{\mathfrak{c}}\subseteq V=F^2$ such that
$\wedge(L_{\mathfrak{c}}\wedge L_{\mathfrak{c}})=\mathfrak{c}^\ast$.
Here $\wedge: V\wedge V\rightarrow F$ is the alternating form given
by $((a_1,a_2),(b_1,b_2))\mapsto a_1 b_2-a_2 b_1$.

\begin{definition}\label{def3.2}
Define the functor
$\mathscr{E}_{K,\mathfrak{c}}^{(\Xi)}:Sch_{/\mathbb{Z}_{(\Xi)}}\rightarrow
Sets$,such that for each $\mathbb{Z}_{(\Xi)}$-scheme $S$,
$\mathscr{E}_{K,\mathfrak{c}}^{(\Xi)}(S)=\{(A_{/S},\iota,\phi,\bar{\alpha}^{(\Xi)})\}_{/\cong}$,
where $\{(A_{/S},\iota,\phi,\bar{\alpha}^{(\Xi)})\}_{/\cong}$ is the
set of isomorphic classes of quadruples
$(A_{/S},\iota,\phi,\bar{\alpha}^{(\Xi)})$ consisting of
\begin{enumerate}
\item an abelian scheme $A_{/S}$ of relative dimension $d$;
\item an algebra homomorphism
$\iota:\mathcal{O}_F\rightarrow\mathrm{End}(A_{/S})$;
\item a $\mathfrak{c}$-polarization $\phi: A\otimes_{\mathcal{O}_F}\mathfrak{c}\rightarrow A^t$ of
$A_{/S}$ (see Definition \ref{2.1});
\item $\bar{\alpha}^{(\Xi)}$ is a $K$-integral level structure of the abelian scheme $A_{/S}$ (see Remark \ref{3.2.5} below).
\end{enumerate}
An isomorphism from one quadruple
$(A_{/S},\iota,\phi,\bar{\alpha}^{(\Xi)})$ to another
$(A'_{/S},\iota',\phi',\bar{\alpha}'^{(\Xi)})$ is an isomorphism
$f:A\rightarrow A'$ of abelian schemes over $S$ such that
\begin{enumerate}
\item $f\circ\iota(b)=\iota'(b)\circ f$ for all $b\in
\mathcal{O}_F$;
\item $f^t\circ \phi'\circ (f\otimes_{\mathcal{O}_F}Id_{\mathfrak{c}})=\phi: A\otimes_{\mathcal{O}_F}\mathfrak{c}\rightarrow A^t$;
\item we have an equality of integral level structures: $T^{(\Xi)}(f)(\bar{\alpha}^{(\Xi)})=\bar{\alpha}'^{(\Xi)}$.
\end{enumerate}
\end{definition}

\begin{remark}\label{3.2.5}
Here we briefly recall the notion of level structures on an abelian
scheme with real multiplication. As in Definition \ref{def3.1}
 and \ref{def3.2}, we fix an abelian scheme $A_{/S}$ and a homomorphism $\iota:\mathcal{O}_F\rightarrow\mathrm{End}(A_{/S})$. Take a point
$s\in S$ and let $\bar{s}:\mathrm{Spec}(k(\bar{s}))\rightarrow S$ be
a geometric point of $S$ over $s$, where $k(\bar{s})$ is a separably
closed field extension of the residue field $k(s)$ of S at the point
$s$. Consider the prime-to-$\Xi$   Tate module
$$T^{\Xi}(A_{\bar{s}})=\lim_{\longleftarrow_N}A[N](k(\bar{s})),$$
where $N$ runs through all positive integers prime to $\Xi$, and set
$V^{\Xi}(A_{\bar{s}})=T^{\Xi}(A_{\bar{s}})\otimes_\mathbb{Z}\mathbb{Z}_{\Xi}$,
which is a free $F_{\mathbb{A}^{(\Xi\infty)}}$-module of rank $2$.
When $N$ is invertible on $S$, the finite scheme $A[N]$ is \'{e}tale
over $S$. The algebraic fundamental group $\pi(S,\bar{s})$ acts on
$A[N](k(\bar{s}))$, and hence on $T^{\Xi}(A_{\bar{s}})$ and
$V^{\Xi}(A_{\bar{s}})$. This action is compatible with the action of
$G(\widehat{\mathbb{Z}}^{(\Xi)})$ (resp.
$G(F_{\mathbb{A}^{(\Xi\infty})})$) on $T^{\Xi}(A_{\bar{s}})$ ( resp.
$V^{\Xi}(A_{\bar{s}})$).

We define a sheaf of sets $ILV^{(\Xi)}:
(Sch_{/S})_{\acute{e}t}\rightarrow Sets$  on the \'{e}tale site of
the category of $S$-schemes such that for any connected $S$-scheme
$S'$, we have:
$$ILV^{(\Xi)}(S')=H^0(\pi(S',\bar{s}'),\ \mathrm{Isom}_{\mathcal{O}_F}(L_\mathfrak{c}\otimes_{\mathcal{O}_F}\widehat{\mathbb{Z}}^{(\Xi)},T^{\Xi}(A_{\bar{s}'}))),$$
where $\bar{s}'$ is a geometric point of $S'$ over a point $s'$ of
$S'$. The \'{e}tale sheaf $ILV^{(\Xi)}$ is independent of the choice
of $s'$ (see \cite{H04} Section 6.4.1).  The group
$G(\widehat{\mathbb{Z}}^{(\Xi)})$ acts on the sheaf $ILV^{(\Xi)}$
through its action on the Tate module $T^{\Xi}(A_{\bar{s}'})$, and
we denote by $ILV^{(\Xi)}/K$ the quotient sheaf of $ILV^{(\Xi)}$
under the group action of $K^{(\Xi)}$. An $K$-integral level
structure of $A_{/S}$ is a section $\bar{\alpha}^{(\Xi)}\in
ILV^{(\Xi)}/K(S)$. Similarly we define another sheaf $RLV^{(\Xi)}:
(Sch_{/S})_{\acute{e}t}\rightarrow Sets$ such that  for any
connected $S$-scheme $S'$, we have:
$$RLV^{(\Xi)}(S')=H^0(\pi(S',\bar{s}'),\mathrm{Isom}_{\mathcal{O}_F}(V\otimes_{\mathbb{Z}}\mathbb{A}^{(\Xi \infty)},V^{\Xi}(A_{\bar{s}'}))),$$
and define the quotient sheaf $RLV^{(\Xi)}/K$ in the same way. Then
a $K$-rational level structure of $A_{/S}$ is a section
$\bar{\eta}^{(\Xi)}\in RLV^{(\Xi)}/K(S)$.

Suppose that we have another abelian scheme $A'_{/S}$ and a
homomorphism $\iota':\mathcal{O}_F\rightarrow\mathrm{End}(A'_{/S})$.
We can similarly define two \'{e}tale sheaves $ILV'^{(\Xi)}$ and
$RLV'^{(\Xi)}$ replacing $A_{/S}$ by $A'_{/S}$ in the above
construction. If $f:A\rightarrow A'$ is an $\mathcal{O}_F$-linear
isomorphism of abelian schemes, the isomorphism $f$ induces an
isomorphism of Tate modules $T^{(\Xi)}(A_{\bar{s}})\cong
T^{(\Xi)}(A'_{\bar{s}})$ for any geometric point $\bar{s}$ of $S$.
Hence $f$ induces an isomorphism of \'{e}tale sheaves $T^{(\Xi)}(f):
ILV^{(\Xi)}\rightarrow ILV'^{(\Xi)}$ which is compatible with the
$G(\widehat{\mathbb{Z}}^{(\Xi)})$-action. Thus $f$ also induces an
isomorphism $T^{(\Xi)}(f): ILV^{(\Xi)}/K\rightarrow ILV'^{(\Xi)}/K$
for all subgroup $K$ of $G(\widehat{\mathbb{Z}})$. For any
$K$-integral level structure $\bar{\alpha}^{(\Xi)}\in
ILV^{(\Xi)}/K(S)$, we use $T^{(\Xi)}(f)(\bar{\alpha}^{(\Xi)})$ to
denote its image under the isomorphism $T^{(\Xi)}(f)$. Similarly if
$f:A\rightarrow A'$ is an $\mathcal{O}_F$-linear prime-to-$\Xi$
isogeny of abelian schemes, then $f$ induces an isomorphism
$V^{(\Xi)}(A_{\bar{s}})\cong V^{(\Xi)}(A'_{\bar{s}})$ and hence
isomorphisms of \'{e}tale sheaves $V^{(\Xi)}(f):
RLV^{(\Xi)}\rightarrow RLV'^{(\Xi)}$ and $V^{(\Xi)}(f):
RLV^{(\Xi)}/K\rightarrow RLV'^{(\Xi)}/K$. For any $K$-rational level
structure $\bar{\eta}^{(\Xi)}\in RLV^{(\Xi)}/K(S)$, we use
$V^{(\Xi)}(f)(\bar{\eta}^{(\Xi)})$ to denote its image under the
isomorphism $V^{(\Xi)}(f)$. We refer to \cite{H06} Section $4.3.1$
for more discussion on this topic.
\end{remark}

\begin{theorem}\label{Th1}
When $K$ is small enough (e.g. $\det(K^{(\Xi)})\cap
\mathcal{O}_{+}^\times\subseteq (K^{(\Xi)}\cap Z(\mathbb{Z}))^2$),
then  we have a natural isomorphism of functors:

$$i:\coprod_{\mathfrak{c}\in
I}\mathscr{E}_{K,\mathfrak{c}}^{(\Xi)}\rightarrow\mathscr{E'}_K^{(\Xi)}.$$

\end{theorem}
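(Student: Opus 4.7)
The strategy is to construct the isomorphism $i$ explicitly in both directions, keeping $(A_{/S},\iota)$ unchanged and translating only the polarization and level data; the smallness hypothesis on $K$ will be precisely what makes the two prescriptions mutually inverse without residual ambiguity.

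For the forward map, given $(A_{/S},\iota,\phi,\bar{\alpha}^{(\Xi)})\in\mathscr{E}_{K,\mathfrak{c}}^{(\Xi)}(S)$, I would choose by weak approximation a totally positive $c_0\in\mathfrak{c}$ with $(c_0)\mathfrak{c}^{-1}$ prime to $\Xi$, and set $\lambda=\phi\circ(\mathrm{id}_A\otimes c_0):A\to A^t$, an $\mathcal{O}_F$-linear polarization of degree prime to $\Xi$. Any other such $c_0$ differs from this one by multiplication by an element of $\mathcal{O}_{(\Xi),+}^\times$, so the class $\bar{\lambda}=\{\lambda\circ\iota(b):b\in\mathcal{O}_{(\Xi),+}^\times\}$ is intrinsic to $\phi$. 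For the level data, pick a representative $\alpha:L_{\mathfrak{c}}\otimes_{\mathcal{O}_F}\widehat{\mathbb{Z}}^{(\Xi)}\cong T^{\Xi}(A_{\bar{s}})$ of $\bar{\alpha}^{(\Xi)}$, fix once and for all an $\mathcal{O}_F$-linear isomorphism $L_{\mathfrak{c}}\otimes_{\mathbb{Z}}\mathbb{Q}\cong V$, extend $\alpha$ to $\eta:V\otimes_{\mathbb{Q}}\mathbb{A}^{(\Xi\infty)}\cong V^{\Xi}(A_{\bar{s}})$, and take its $K$-orbit $\bar{\eta}^{(\Xi)}$.

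For the inverse, given $(A_{/S},\iota,\bar{\lambda},\bar{\eta}^{(\Xi)})\in\mathscr{E'}_K^{(\Xi)}(S)$, I would choose a representative $\eta$ of $\bar{\eta}^{(\Xi)}$ and let $L=\eta^{-1}(T^{\Xi}(A_{\bar{s}}))\subseteq V\otimes\mathbb{A}^{(\Xi\infty)}$. The class of $\wedge^2 L$ in $Cl(K)=(F_{\mathbb{A}^{(\Xi\infty)}})^\times/\mathcal{O}_{(\Xi),+}^\times\det(K)$ is independent of the choice within the $K$-orbit of $\eta$ and matches the class of $\mathfrak{c}^*$ for a unique $\mathfrak{c}\in I$. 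Choose $g\in G(F_{\mathbb{A}^{(\Xi\infty)}})$ with $g\cdot(L_{\mathfrak{c}}\otimes_{\mathcal{O}_F}\widehat{\mathbb{Z}}^{(\Xi)})=L$; then $\eta\circ g$ restricts to an integral isomorphism $L_{\mathfrak{c}}\otimes_{\mathcal{O}_F}\widehat{\mathbb{Z}}^{(\Xi)}\cong T^{\Xi}(A_{\bar{s}})$, whose $K^{(\Xi)}$-orbit defines $\bar{\alpha}^{(\Xi)}$. By Proposition \ref{2.4} the sheaf $\mathrm{Hom}^{Sym}_{\mathcal{O}_F}(A,A^t)$ is a constant invertible $\mathcal{O}_F$-module with positivity; the chosen $\mathfrak{c}$ together with any representative $\lambda\in\bar{\lambda}$ then pin down a unique $\mathcal{O}_F$-linear isomorphism $\phi:A\otimes_{\mathcal{O}_F}\mathfrak{c}\cong A^t$ sending the corresponding positive generator of $\mathfrak{c}$ to $\lambda$.

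The main obstacle is the bookkeeping of equivalences: the ambiguities in choosing $c_0$, in the $K$-orbit representative $\eta$, in the element $g$, and in the polarization representative of $\bar{\lambda}$ must cancel pairwise so that both maps are well-defined and mutually inverse. This reduces to showing that if two quadruples on one side have the same image on the other, they are already isomorphic in the source functor; the concrete obstruction is a totally positive unit appearing simultaneously as a rescaling of the polarization and as an element of $\det(K^{(\Xi)})$, and the hypothesis $\det(K^{(\Xi)})\cap\mathcal{O}_+^\times\subseteq(K^{(\Xi)}\cap Z(\mathbb{Z}))^2$ forces any such unit to be the square of a scalar in $K$, which acts trivially on all the data. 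Functoriality in $S$ and compatibility with morphisms then follow automatically from the constructions.
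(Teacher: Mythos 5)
Your proposal has a genuine gap in the construction of the inverse map. In $\mathscr{E'}_K^{(\Xi)}$, isomorphisms between quadruples are prime-to-$\Xi$ isogenies, whereas in $\mathscr{E}_{K,\mathfrak{c}}^{(\Xi)}$ they are genuine isomorphisms of abelian schemes; the essential content of the theorem is that within each prime-to-$\Xi$ isogeny class there is a distinguished representative carrying an integral level structure. Your inverse keeps the abelian scheme $A$ fixed and merely replaces $\eta$ by $\eta\circ g$, where $g$ moves $L_{\mathfrak c}\otimes\widehat{\mathbb Z}^{(\Xi)}$ onto $L=\eta^{-1}(T^{\Xi}(A_{\bar s}))$. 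This $g$ is typically not in $K$, so the integral level structure $\eta\circ g$ and the original rational level structure $\eta$ lie in different $K$-orbits. Applying the forward map to your output therefore yields $(A,\iota,\bar\lambda,K\text{-orbit of }\eta\circ g)$, which is not the quadruple you started with, and, since $\eta\circ g\circ\eta^{-1}$ is in general not induced by any prime-to-$\Xi$ self-isogeny of $A$, it is not even isomorphic to it in $\mathscr{E'}_K^{(\Xi)}(S)$. So the two constructions are not mutually inverse: the forward map is only \emph{essentially} surjective (surjective after passing to isogeny), and your inverse must replace $A$ by the prime-to-$\Xi$ isogenous $A'$ with $T^{\Xi}(A'_{\bar s})=\eta(L_{\mathfrak c}\otimes\widehat{\mathbb Z}^{(\Xi)})$, not modify $\eta$ while keeping $A$.

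This is precisely where the paper's own contribution to the proof lies. The argument follows Hida's \cite{H04} Section $4.2.1$: given $(A,\iota,\bar\lambda,\bar\eta)$ one passes to the prime-to-$\Xi$ isogenous $A'$ on which the level data becomes integral, and because $S$ is a $\mathbb{Z}_{(\Xi)}$-scheme this isogeny is \'etale. The new point the paper must check (Lemma \ref{2.10}) is that the Deligne--Pappas condition $(\mathrm{DP})$, which is part of the data in $\mathscr{E'}_K^{(\Xi)}$ and which guarantees the polarization module is invertible and the $\mathfrak{c}$-polarization exists, is preserved under such an \'etale $\mathcal{O}_F$-linear isogeny. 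Your proposal, by avoiding the isogeny, also avoids ever having to invoke this lemma, but the step cannot in fact be avoided. A secondary issue: the existence of the isomorphism $\phi:A\otimes_{\mathcal{O}_F}\mathfrak c\to A^t$ at the end of your inverse needs the polarization module $\mathrm{Hom}^{Sym}_{\mathcal{O}_F}(A,A^t)$ to be oriented-isomorphic to $\mathfrak c$; you implicitly assume this follows from the class of $\wedge^2 L$, but that lives in $Cl(K)$ while the polarization module is constrained only in $Cl^+(F)$, so the matching of the two invariants also requires the isogeny to move the polarization module to the correct class.
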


The proof is essentially given in \cite{H04} Section $4.2.1$ so we
omit the proof here. The only thing we want to remark here is that
for any quadruple $(A_{/S},\iota,\bar{\lambda},\bar{\eta}^{(\Xi)})$
considered in Definition \ref{def3.1}, we can find an abelian scheme
$A'_{/S}$ with real multiplication $\iota'$,and  an
$\mathcal{O}_F$-linear prime-to-$\Xi$ isogeny $f:A\rightarrow A'$ of
abelian schemes over $S$ such that $A'_{/S}$ admits an integral
level structure. Since $S$ is a $\mathbb{Z}_{(\Xi)}$-scheme, the
isogeny $f$ is \'{e}tale. From Lemma \ref{2.10}, the pair
$(A'_{/S},\iota')$ also satisfies the condition $(\mathrm{DP})$.
Then we can follow the argument in \cite{H04} Section $4.2.1$ to
conclude this theorem.

From Theorem \ref{Th1}, when $K$ is small enough, the functor
$\mathscr{E}_{K,\mathfrak{c}}^{(\Xi)}$ is representable. By Theorem
\ref{Th1}, we can assume that the functor $\mathscr{E'}_K^{(\Xi)}$
is represented by a $\mathbb{Z}_{(\Xi)}$-scheme $Sh_K^{(\Xi)}$. From
\cite{DP} Theorem $2.2$, the scheme $Sh_K^{(\Xi)}$ is flat of
complete intersection over $\mathbb{Z}_{(\Xi)}$,and smooth over
$\mathbb{Z}_{(\Xi)}[\frac{1}{d_F}]$.

Now we take the projective limit of $Sh_K^{(\Xi)}$ for various $K$,
and get a $\mathbb{Z}_{(\Xi)}$-scheme $Sh^{(\Xi)}$. It is clear that
$Sh^{(\Xi)}_{/\mathbb{Z}_{(\Xi)}}$ represents the moduli problem
$\mathscr{E'}^{(\Xi)}: Sch_{/\mathbb{Z}_{(\Xi)}}\rightarrow Set$,
such that for each $\mathbb{Z}_{(\Xi)}$-scheme $S$,
$\mathscr{E'}_K^{(\Xi)}(S)=[(A_{/S},\iota,\bar{\lambda},\eta^{(\Xi)})]$.
where $[(A_{/S},\iota,\bar{\lambda},\bar{\eta}^{(\Xi)})]$ is the set
of isomorphism classes of quadruples
$(A_{/S},\iota,\bar{\lambda},\eta^{(\Xi)})$ considered in Definition
\ref{def3.1}, except that $\eta^{(\Xi)}\in RLA^{(\Xi)}(S)$ is a
rational level structure instead of  a $K$-rational level structure
for some open compact subgroup $K$. An isomorphism from one
quadruple $(A_{/S},\iota,\bar{\lambda},\eta^{(\Xi)})$ to another
$(A'_{/S},\iota',\bar{\lambda}',\eta'^{(\Xi)})$ is an element $f\in
\mathrm{Hom}(A_{/S},A'_{/S})\otimes_\mathbb{Z}\mathbb{Z}_{(\Xi)}$
whose degree is prime to $\Xi$ such that it satisfies the first two
conditions in Definition \ref{def3.1}, and also $V^{(\Xi)}(f)(
\eta^{(\Xi)})=\eta'^{(\Xi)}$ instead of that last condition there.

For any $g\in G(F_{\mathbb{A}^{(\Xi\infty)}})$, the map sending each
quadruple $(A_{/S},\iota,\bar{\lambda},\eta^{(\Xi)})$ to another
quadruple
\\$(A_{/S},\iota,\bar{\lambda},g(\eta^{(\Xi)}))$ induces an
automorphism of the functor $\mathscr{E'}^{(\Xi)}$, and hence an
automorphism of the Shimura variety
$Sh^{(\Xi)}_{/\mathbb{Z}_{(\Xi)}}$ by universality. We still denote
this action by $g$.

For simplicity we denote the Shimuar variety
$Sh^{(\Xi)}_{/\mathbb{Z}_{(\Xi)}}$ by $X_{/\mathbb{Z}_{(\Xi)}}$ in
the following discussion. Pick a closed point $x_p\in
X(\bar{\mathbb{F}}_p)$. Let $K$ be a neat subgroup of
$G(F_{\mathbb{A}^{(\Xi\infty)}})$. Then the natural morphism
$X\rightarrow X_K=X/K$ is \'{e}tale. Let $\mathcal{O}_{X,x_p}$ and
$\mathcal{O}_{X_K,x_p}$ be the stalk of $X$ and $X_K$ at $x_p$,
respectively. The completion of $\mathcal{O}_{X,x_p}$ is canonically
isomorphic to the completion of $\mathcal{O}_{X_K,x_p}$, and we
denote this completion by $\widehat{\mathcal{O}}_{x_p}$. Suppose
that $x_p$ is represented by a quadruple
$(A_{0/\bar{\mathbb{F}}_p},\iota_0,\phi_0,\bar{\alpha_0}^{(\Xi)})\in
\mathscr{E}_{K,\mathfrak{c}}^{(\Xi)}(\bar{\mathbb{F}}_p)$.

Let $CL_{/W_p}$ be the category of complete local $W_p$-algebras
with residue field $\bar{\mathbb{F}}_p$. Consider the local
deformation functor $\widehat{D}_p:CL_{/W_p}\rightarrow Set$, given by
$$\widehat{D}_p(R)=\{(A_{/R},\iota_R,\phi_R)|(A_{/R},\iota_R,\phi_R)\times_R
\bar{\mathbb{F}}_p\cong
(A_{0/\bar{\mathbb{F}}_p},\iota_0,\phi_0)\}_{/\cong},$$
 here the
triple $(A_{/R},\iota_R,\phi_R)$ consists of an abelian $A$ schemes
over $R$, an algebra homomorphism
$\iota_R:\mathcal{O}_F\rightarrow\mathrm{End}(A_{/R})$ and a
$\mathfrak{c}$-polarization $\phi_R$ of $A_{/R}$. An isomorphism
from a triple $(A_{/R},\iota_R,\phi_R)$ to another
$(A'_{/R},\iota'_R,\phi'_R)$ is an isomorphism $f:A\rightarrow A'$
of abelian schemes over $R$ such that
\begin{enumerate}
\item for all $a\in \mathcal{O}_F$, we have $f\circ \iota_R(a)=\iota'_R(a)\circ f:A\rightarrow A'$;
\item $f^t\circ \phi'_R\circ (f\otimes Id_\mathfrak{c})=\phi_R:A\otimes_{\mathcal{O}_F}\mathfrak{c}\rightarrow A^t$.
\end{enumerate}

Define a functor $DEF_p:CL_{/W_p}\rightarrow Set$ by the formula:
$$DEF_p(R)=\{(D_{/R},\Lambda_R,\varepsilon_R)\}_{/\cong},$$
where $D_{/R}$ is a Barsotti-Tate $\mathcal{O}_F$-module over $R$,
$\Lambda_R: D\otimes_{\mathcal{O}_F}\mathfrak{c}\rightarrow D^t$ is
an $\mathcal{O}_F$-linear  isomorphism of Barsotti-Tate
$\mathcal{O}_F$-modules over $R$ ($D^t$ is the Cartier dual of $D$),
and $\varepsilon_R:D_0=D\otimes_R \bar{\mathbb{F}}_p\rightarrow
A_0[p^\infty]$ is an isomorphism of Barsotti-Tate
$\mathcal{O}_F$-modules over the special fiber
$\mathrm{Spec}(\bar{\mathbb{F}}_p)$ of $\mathrm{Spec}(R)$.

For any triple $(A_{/R},\iota_R,\phi_R)$ in $\widehat{D}_p(R)$, let
$A[p^\infty]_{/R}$ be its $p$-divisible Barsotti-Tate
$\mathcal{O}_F$-module over $R$. The $\mathfrak{c}$-polarization
$\phi_R$ of $A_{/R}$ gives an isomorphism $\Lambda_R:
A[p^\infty]\otimes_{\mathcal{O}_F}\mathfrak{c}\rightarrow
A^t[p^\infty]\cong ( A[p^\infty])^t$. The isomorphism
$(A_{/R},\iota_R,\phi_R)\times_R \bar{\mathbb{F}}_p\cong
(A_{0/\bar{\mathbb{F}}_p},\iota_0,\phi_0)$ gives an isomorphism
$\varepsilon_R: A[p^\infty]\otimes_R \bar{\mathbb{F}}_p\rightarrow
A_0[p^\infty]$. By the Serre-Tate deformation theory (\cite{K0}
Theorem $1.2.1$), we have:

\begin{proposition}\label{3.3}
The above association $(A_{/R},\iota_R,\phi_R)\mapsto (A[p^\infty]_{/R},\Lambda_R,\varepsilon_R)$ induces an equivalence of functors $\widehat{D}_p\rightarrow DEF_p$.
\end{proposition}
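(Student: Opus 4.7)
The strategy is to reduce to the classical Serre--Tate deformation theorem (\cite{K0} Theorem 1.2.1) and transport the additional data $(\iota,\phi)$ through the resulting equivalence. Without extra structure, Serre--Tate already yields an equivalence between the groupoid of deformations of $A_0$ over $R\in CL_{/W_p}$ and the groupoid of deformations of its $p$-divisible group $A_0[p^\infty]$, the datum $\varepsilon_R$ playing the role of the identification of the reduction of the lifted Barsotti--Tate group with $A_0[p^\infty]$. The remaining task is to track the $\mathcal{O}_F$-action and the $\mathfrak{c}$-polarization.

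For the forward direction I would send $(A_{/R},\iota_R,\phi_R)\in\widehat{D}_p(R)$ to $(A_{/R}[p^\infty],\Lambda_R,\varepsilon_R)$, where $D_{/R}:=A_{/R}[p^\infty]$ carries the $\mathcal{O}_F$-action induced by $\iota_R$, $\Lambda_R$ is obtained from $\phi_R$ by passing to $p$-divisible groups via the canonical identification $A_{/R}^t[p^\infty]\cong (A_{/R}[p^\infty])^t$, and $\varepsilon_R$ is the reduction isomorphism. This is manifestly functorial. For the reverse direction, given $(D_{/R},\Lambda_R,\varepsilon_R)\in DEF_p(R)$, classical Serre--Tate produces from $(D_R,\varepsilon_R)$ a unique (up to unique isomorphism) deformation $A_{/R}$ of $A_0$ with $A_{/R}[p^\infty]$ canonically identified with $D_R$. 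The $\mathcal{O}_F$-action on $D_R$ then transfers to $\iota_R:\mathcal{O}_F\to\mathrm{End}(A_{/R})$ via the Serre--Tate injection $\mathrm{End}(A_{/R})\hookrightarrow\mathrm{End}(A_{/R}[p^\infty])$, whose image consists precisely of those endomorphisms whose reduction preserves $\iota_0$. The isomorphism $\Lambda_R$ similarly transfers to an $\mathcal{O}_F$-linear homomorphism $\phi_R:A_{/R}\otimes_{\mathcal{O}_F}\mathfrak{c}\to A_{/R}^t$ lifting the $\mathcal{O}_F$-linear isomorphism $\phi_0$; it is automatically an isomorphism because its kernel, being a finite flat subgroup scheme trivial on the $p$-divisible part and on the special fiber, is trivial.

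The main subtlety, and the crux of the argument, is the Serre--Tate rigidity for homomorphisms (\cite{K0} Corollary 1.2.2): a homomorphism of $p$-divisible groups over $R$ lifts, necessarily uniquely, to a homomorphism of abelian schemes over $R$ if and only if its reduction to $\bar{\mathbb{F}}_p$ does so. This is where $p$-adic completeness of $R$, as guaranteed by the definition of $CL_{/W_p}$, is essential. Once this rigidity is in hand, $\mathcal{O}_F$-linearity and symmetry of $\phi_R$ descend automatically from $\phi_0$, since a morphism of abelian schemes lifting a given one is determined by its $p$-divisible restriction, so two morphisms agreeing there and on the special fiber must coincide. The two functors constructed above are quasi-inverse by the uniqueness part of Serre--Tate, which concludes the proof.
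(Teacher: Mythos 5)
Your proof is correct and takes essentially the same route as the paper, which simply invokes the Serre--Tate deformation theorem (\cite{K0} Theorem 1.2.1) with no further elaboration. You have usefully spelled out what the paper leaves implicit: that the $\mathcal{O}_F$-action and the $\mathfrak{c}$-polarization transfer across the equivalence via Serre--Tate rigidity for homomorphisms, and that the lifted $\phi_R$ is forced to be an isomorphism (and to send totally positive elements to polarizations) since these conditions are detected on the closed fiber over a complete local base.
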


We  define two more
functors $DEF_p^?:CL_{/W_p}\rightarrow Set$,$?=ord,ll$, by:
$$DEF_p^?(R)=\{(D^?,\phi^?,\varepsilon^?)\}_{/\cong},$$
here in the triple $( D^?,\phi^?,\varepsilon^?)$, $D^?$ is a
Barsotti-Tate $\mathcal{O}_F$-module over
$R$,$\phi^?:D^?\otimes_{\mathcal{O}_F}\mathfrak{c}\rightarrow(D^?)^t$
is an isomorphism of Barsotti-Tate $\mathcal{O}_F$-modules over $R$,
and $\varepsilon^?: D^?\otimes_R \bar{\mathbb{F}}_p\rightarrow
A_0[p^\infty]^?$ is an isomorphism over $\bar{\mathbb{F}}_p$.

Similar with \cite{H12} Proposition $1.2$, we have the following facts:
\begin{enumerate}
\item the functor $DEF_p$ is represented by the formal scheme
$\widehat{S}_{p/W_p}$ associated to $\widehat{\mathcal{O}}_{x_p}$;
\item there is a natural equivalence of functors: $DEF_p\cong DEF_p^{ord}\times
DEF_p^{ll}$,and hence the formal $\widehat{S}_{p/W_p}$ is a product of
two formal schemes $\widehat{S}^{ord}_{p/W_p}$ and
$\widehat{S}^{ll}_{p/W_p}$ such that $DEF_p^?$ is represented by
$\widehat{S}^{?}_{p/W_p}$ for $?=ord, ll$;
\item For each $\mathfrak{p}\in \Sigma_p^{ord}$, fix an isomorphism $\mathcal{O}_{\mathfrak{p}}\cong
T_{\mathfrak{p}}(A_0)$ (recall that $T_{\mathfrak{p}}(A_0)$ is the $\mathfrak{p}$-adic Tate module of $A_0$ defined in Section 1).
Since $\mathfrak{c}$ is prime to $p$, by the
$\mathfrak{c}$-polarization $\phi_0$, we also have an isomorphism
$\mathcal{O}_{\mathfrak{p}}\cong T_{\mathfrak{p}}(A^t_0)$. Then
$\widehat{S}^{ord}_{p/W_p}$ is a smooth formal scheme over $W_p$
which is isomorphic to
$$\prod_{\mathfrak{p}\in
\Sigma^{ord}}\mathrm{Hom}(T_{\mathfrak{p}}(A_0)\otimes_{\mathcal{O}_{\mathfrak{p}}}T_{\mathfrak{p}}(A^t_0),
\widehat{\mathbb{G}}_m)\cong \prod_{\mathfrak{p}\in
\Sigma^{ord}}\mathrm{Hom}(\mathcal{O}_{\mathfrak{p}},\widehat{\mathbb{G}}_m)=\prod_{\mathfrak{p}\in
\Sigma^{ord}}\widehat{\mathbb{G}}_m\otimes_{\mathbb{Z}_p}\mathcal{O}_{\mathfrak{p}}^\ast,$$
here
$\mathcal{O}_{\mathfrak{p}}^\ast=\mathrm{Hom}_{\mathbb{Z}_p}(\mathcal{O}_{\mathfrak{p}},\mathbb{Z}_p)$.
\end{enumerate}

In fact, for any triple $(A_{/R},\iota_R,\phi_R)$ in $\widehat{D}_p(R)$, the level structure $\bar{\alpha_0}^{(\Xi)}$ on $A_0$
can be extended uniquely to a level structure on $A_{/R}$. Then the functor $\widehat{D}_p$, and hence the functor $DEF_p$ by Proposition
\ref{3.3}, is represented by the the formal scheme $\widehat{S}_{p/W_p}=Spf(\widehat{\mathcal{O}}_{x_p})$.

For a triple $(D_{/R},\Lambda_R,\varepsilon_R)\in DEF_p(R)$, we have a canonical decomposition  the Barsotti-Tate $\mathcal{O}_F$-module
$D=D^{ord}\times D^{ll}$, where $D^{ord}$ is the maximal ordinary Barsotti-Tate $\mathcal{O}_F$-submodule of $D$, and $D^{ll}$ is its local-local
complement. From this we have a morphism
$$DEF_p(R)\ni (D_{/R},\Lambda_R,\varepsilon_R)\mapsto \{(D^{ord}_{/R},\Lambda_R|_{D^{ord}},\varepsilon_R|_{D^{ord}}),(D^{ll}_{/R},\Lambda_R|_{D^{ll}},\varepsilon_R|_{D^{ll}})\}\in DEF^{ord}_p(R)\times DEF^{ll}_p(R),$$
from which we get a equivalence of functors between $DEF_p$ and $DEF_p^{ord}\times DEF_p^{ll}$. Hence the formal scheme $\widehat{S}_{p/W_p}$
is a product of two formal schemes $\widehat{S}^{ord}_{p/W_p}\times\widehat{S}^{ll}_{p/W_p}$.

In contrast with \cite{H12} Proposition $1.2$, the formal scheme
$\widehat{S}_{p/W_p}$ may not be smooth when $p$ divides the
discriminant $d_F$ of $F$ since the Shimura variety
$Sh^{(p)}_{/\mathbb{Z}_{(p)}}$ we consider here is not smooth. But
from the Serre-Tate deformation theory, the formal scheme
$\widehat{S}^{ord}_{p/W_p}$ is always smooth, and this is the  part
we are  interested in.

\section{Eigen coordinates}

Let $k\subseteq \bar{\mathbb{Q}}$ be a number field and $\Xi$ be a finite set of primes. For each $p\in \Xi$, choose a
finite extension $\widetilde{K}_p$ of $K_p$ in $\bar{\mathbb{Q}}_p$
such that:

\begin{enumerate}
\item $k\subseteq i_p^{-1}(\widetilde{K}_p)$;
\item $i_p^{-1}(\widetilde{K}_p)$ contains the Galois closure of $F$ in
$\bar{\mathbb{Q}}$.
\end{enumerate}
Denote by $\widetilde{W}_p$ the integer ring of $\widetilde{K}_p$. Then
define:
$$\widetilde{\mathcal{W}}_\Xi=\bigcap_{p\in \Xi} i_p^{-1}(\widetilde{W}_p)\subseteq\bar{\mathbb{Q}},\widetilde{\mathcal{W}}_k=
\widetilde{\mathcal{W}}_\Xi \cap k.$$ The ring
$\widetilde{\mathcal{W}}_\Xi$ is a semilocal ring, and for each
$l\in \Xi$, there is a unique maximal ideal $\mathfrak{m}_l$ with
residue characteristic $l$. Let $\widetilde{\mathcal{K}}_\Xi$ be the
quotient field of $\widetilde{\mathcal{W}}_\Xi$.

Suppose that the quadruple
$(A_{/\widetilde{\mathcal{W}}_\Xi},\iota,\bar{\lambda},\eta^{(\Xi)})$
represents a point $x\in Sh^{(\Xi)}(\widetilde{\mathcal{W}}_\Xi)$
such that the image of $x$ lies in
$Sh^{(\Xi)}(\widetilde{\mathcal{W}}_k)$. For each $p\in \Xi$, $x$
induces an $\bar{\mathbb{F}}_p$-valued point $x_p\in
Sh^{(\Xi)}(\bar{\mathbb{F}}_p)$. Then the quadruple the quadruple
$(A_{\mathfrak{P}/\bar{\mathbb{F}}_p},\iota_\mathfrak{P},\bar{\lambda}_\mathfrak{P},\eta_\mathfrak{P}^{(\Xi)})$
obtained by mod $p$ reduction represents the point $x_p$.

By \cite{H12} Lemma $2.2$, we have

\begin{lemme}\label{4.1}
If $A_{\mathfrak{P}/\bar{\mathbb{F}}_p}$ is not supersingular (i.e.
$\Sigma_p^{ord}\neq\emptyset$), then there exists a CM quadratic
extension $M$ of $F$, and an isomorphism of $F$-algebras
$\theta_{\mathfrak{P}}:M\cong$
$\mathrm{End}^0_F(A_{\mathfrak{P}/\bar{\mathbb{F}}_p})$. Set
$R=M\cap\theta_{\mathfrak{P}}^{-1}(\mathrm{End}_{\mathcal{O}_F}(A_{\mathfrak{P}/\bar{\mathbb{F}}_p}))$,
which is an order in $M$. If a prime ideal $\mathfrak{p}$ in
$\mathcal{O}_F$ belongs to $\Sigma^{ord}_p$; i.e.
$A_{\mathfrak{P}}[\mathfrak{p}]$ has nontrivial
$\bar{\mathbb{F}}_p$-rational points, then $\mathfrak{p}$ splits
into two primes $\mathcal{P}\bar{\mathcal{P}}$ in $R$ with
$\mathcal{P}\neq\bar{\mathcal{P}}$.
\end{lemme}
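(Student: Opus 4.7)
The plan is to combine Honda--Tate theory with linear algebra over the Tate module. I descend $(A_\mathfrak P,\iota_\mathfrak P)$ to a large enough finite subfield $\mathbb F_q\subset\bar{\mathbb F}_p$, choosing $q$ to be an even power of $p$ (so $\sqrt q\in\mathbb Q\subset F$) and large enough that every endomorphism of $A_\mathfrak P/\bar{\mathbb F}_p$ is already defined over $\mathbb F_q$; let $\pi\in\mathrm{End}(A_\mathfrak P/\mathbb F_q)$ denote the $q$-Frobenius. Because $\pi$ is central in $\mathrm{End}^0(A_\mathfrak P/\mathbb F_q)$ and commutes with $\iota_\mathfrak P(\mathcal O_F)$, we have $\pi\in\mathrm{End}^0_F(A_\mathfrak P)$. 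Fix $\ell\neq p$; the Tate module $V_\ell(A_\mathfrak P)$ is free of rank two over $F_\ell:=F\otimes_{\mathbb Q}\mathbb Q_\ell$, and Tate's isogeny theorem identifies
\begin{equation*}
\mathrm{End}^0_F(A_\mathfrak P)\otimes_{\mathbb Q}\mathbb Q_\ell \;=\; C_{M_2(F_\ell)}(\pi).
\end{equation*}

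For the first assertion, I rule out the two cases in which $\pi$ fails to generate a CM quadratic extension of $F$. If $\pi$ is a scalar in $M_2(F_\ell)$, then $\pi$ already lies in the image of $F\hookrightarrow\mathrm{End}^0_F(A_\mathfrak P)$; the Weil bound $|\tau(\pi)|=\sqrt q$ for every real embedding $\tau\colon F\hookrightarrow\mathbb R$ forces $\pi^2=q$ in $F$, so every eigenvalue of Frobenius on $V_\ell$ equals $\pm\sqrt q$, the Newton slopes are all $1/2$, and $A_\mathfrak P$ is supersingular, contradicting $\Sigma^{ord}_p\neq\emptyset$. Otherwise $\pi$ is non-scalar, its centraliser in $M_2(F_\ell)$ equals $F_\ell[\pi]$, and Tate's theorem yields $\mathrm{End}^0_F(A_\mathfrak P)=F[\pi]$, an $F$-algebra of dimension two. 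Since $\mathrm{End}^0(A_\mathfrak P)$ is semisimple, $F[\pi]$ is reduced, so is either a quadratic field extension of $F$ or $F\oplus F$; in the latter case the minimal polynomial of $\pi$ over $F$ splits as $(T-\alpha)(T-\beta)$ with distinct $\alpha,\beta\in F$, the Weil bound again gives $\{\alpha,\beta\}=\{\pm\sqrt q\}$, and supersingularity contradicts the hypothesis once more. Hence $M:=F[\pi]$ is a quadratic field over $F$, and under any real embedding of $F$ the two roots of the minimal polynomial of $\pi$ have absolute value $\sqrt q$ and product $q$, so form a complex-conjugate pair; this shows $M\otimes_{F,\tau}\mathbb R\cong\mathbb C$ for each real $\tau$, i.e.\ $M$ is a CM quadratic extension of $F$, and $\theta_\mathfrak P$ is the resulting identification.

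For the second assertion, fix $\mathfrak p\in\Sigma^{ord}_p$. Since $\bar{\mathbb F}_p$ is perfect, the connected--étale sequence of the finite group scheme $A_\mathfrak P[\mathfrak p^n]$ is canonically split for every $n$; passing to the limit, and using the $\mathfrak p$-ordinary assumption together with the $\mathfrak c$-polarisation of Section~2 (which forces the multiplicative summand to also be non-trivial via the isogeny $A_\mathfrak P\sim A_\mathfrak P^t$ on $\mathfrak p^\infty$-parts), we obtain a canonical decomposition
\begin{equation*}
A_\mathfrak P[\mathfrak p^\infty] \;=\; A_\mathfrak P[\mathfrak p^\infty]^0 \oplus A_\mathfrak P[\mathfrak p^\infty]^{\mathrm{et}},
\end{equation*}
each summand of $\mathcal O_\mathfrak p$-height one. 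The associated projector $e$ is an $\mathcal O_\mathfrak p$-linear endomorphism of $A_\mathfrak P[\mathfrak p^\infty]$, and Tate's $p$-adic isogeny theorem gives $\mathrm{End}_{\mathcal O_F}(A_\mathfrak P)\otimes_{\mathcal O_F}\mathcal O_\mathfrak p=\mathrm{End}_{\mathcal O_\mathfrak p}(A_\mathfrak P[\mathfrak p^\infty])$, so $e\in R\otimes_{\mathcal O_F}\mathcal O_\mathfrak p\subset M_\mathfrak p:=M\otimes_F F_\mathfrak p$. The existence of a non-trivial idempotent in the dimension-two $F_\mathfrak p$-algebra $M_\mathfrak p$ forces $M_\mathfrak p\cong F_\mathfrak p\times F_\mathfrak p$, i.e.\ $\mathfrak p$ splits in the CM extension $M$ as $\mathcal P_M\bar{\mathcal P}_M$ with $\mathcal P_M\neq\bar{\mathcal P}_M$. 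Since $R\otimes\mathcal O_\mathfrak p$ contains both idempotents $e$ and $1-e$ and lies between the diagonal $\mathcal O_\mathfrak p$ and $\mathcal O_{\mathcal P_M}\times\mathcal O_{\bar{\mathcal P}_M}=\mathcal O_\mathfrak p\times\mathcal O_\mathfrak p$, it must equal $\mathcal O_\mathfrak p\times\mathcal O_\mathfrak p$; consequently $\mathcal P:=R\cap\mathcal P_M$ and $\bar{\mathcal P}:=R\cap\bar{\mathcal P}_M$ are two distinct primes of $R$ above $\mathfrak p$, as required.

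The main obstacle is justifying that the canonical connected--étale splitting of $A_\mathfrak P[\mathfrak p^n]$ produces an idempotent actually inside $R\otimes\mathcal O_\mathfrak p$ rather than merely inside the ambient endomorphism ring, which rests on the integral form of Tate's $p$-adic isogeny theorem. Once this structural input is granted, together with the classical Weil Riemann hypothesis for abelian varieties over finite fields, the remainder of the argument is elementary linear algebra over $F_\ell$ and $F_\mathfrak p$.
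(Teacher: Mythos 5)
The paper does not prove this statement; it cites Hida's preprint \cite{H12}, Lemma~$2.2$, so there is no in-paper argument against which to measure your proposal, and you are supplying the proof from scratch. Your overall architecture is plausible: descend to a finite field, use the $q$-Frobenius $\pi$ together with Tate's $\ell$-adic theorem and the Weil bound for the first claim, and the connected--\'etale idempotent together with Tate's $p$-adic theorem for the second. Both ingredients do ultimately deliver the lemma, and your awareness of the delicate point (whether the idempotent $e$ is genuinely integral and genuinely in the image of $\mathrm{End}_{\mathcal O_F}(A_{\mathfrak P})$) is to the point.

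There is, however, a real gap in the first half. You dichotomize into ``$\pi$ scalar in $M_2(F_\ell)$'' and ``$\pi$ non-scalar, hence $C_{M_2(F_\ell)}(\pi)=F_\ell[\pi]$.'' Since $F_\ell=\prod_{\mathfrak l\mid\ell}F_{\mathfrak l}$ is a product of fields, this dichotomy is not exhaustive: $\pi_{\mathfrak l}$ could be scalar at some places $\mathfrak l$ and regular semisimple at others, in which case $C_{M_2(F_\ell)}(\pi)=\prod_{\mathfrak l}C_{M_2(F_{\mathfrak l})}(\pi_{\mathfrak l})$ contains an $M_2(F_{\mathfrak l})$-factor, is non-commutative, and is strictly larger than $F_\ell[\pi]$ -- so the conclusion $\mathrm{End}^0_F(A_{\mathfrak P})=F[\pi]$ fails in that branch. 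You also assert, without proof, that ``$\pi$ scalar in $M_2(F_\ell)$'' forces $\pi$ to lie in $F$ itself rather than merely in the (much larger) algebraic closure of $F$ inside $F_\ell$. Both points can be patched: because $\mathrm{End}^0_F(A_{\mathfrak P})$ is a single $F$-algebra, $\dim_{F_{\mathfrak l}}\bigl(\mathrm{End}^0_F(A_{\mathfrak P})\otimes_F F_{\mathfrak l}\bigr)$ is independent of $\mathfrak l$, and since each local centralizer has $F_{\mathfrak l}$-dimension $2$ or $4$, either every $\pi_{\mathfrak l}$ is scalar or none is; and the set of $x\in\mathrm{End}^0_F(A_{\mathfrak P})$ acting on $V_\ell$ by $F_\ell$-scalars is a $\mathbb Q$-subspace whose $\ell$-adic completion lies in $F_\ell$, hence has $\mathbb Q$-dimension at most $d$ and therefore equals $F$. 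But as written, the step does not go through. A shorter and cleaner route to commutativity, which you already have in hand for the second assertion, is to use the ordinary prime directly: $\mathrm{End}^0_F(A_{\mathfrak P})\otimes_F F_{\mathfrak p}\cong \mathrm{End}^0_{\mathcal O_{\mathfrak p}}(A_{\mathfrak P}[\mathfrak p^\infty])\cong F_{\mathfrak p}\times F_{\mathfrak p}$ because an ordinary $\mathcal O_{\mathfrak p}$-Barsotti--Tate group over $\bar{\mathbb F}_p$ has commutative endomorphism algebra of the expected size; this gives both the degree bound $[\,\mathrm{End}^0_F(A_{\mathfrak P}):F\,]=2$ and commutativity at once, after which the Weil bound and the positivity of Rosati give the CM statement. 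Similarly, once $M$ is known to be CM, the splitting of $\mathfrak p$ in $R$ can be read off from the Shimura--Taniyama description of the reduction of a CM abelian variety, which lightens the reliance on the integral $p$-adic Tate comparison over $\bar{\mathbb F}_p$ that you correctly identify as the most delicate input.
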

As in \cite{H12}, we make the convention that we choose
$\mathcal{P}$ such that $A_{\mathfrak{P}}[\mathcal{P}]$ is connected
and $A_{\mathfrak{P}}[\bar{\mathcal{P}}]$ is \'{e}tale.

By the above lemma, we have an isomorphism $M\otimes_F
F_{\mathfrak{p}}\cong F_{\mathfrak{p}}\times F_{\mathfrak{p}}$, such
that the first factor corresponds to $\mathcal{P}$ and the second
factor  corresponds to $\bar{\mathcal{P}}$. As $M$ can be naturally
embedded into $M\otimes_F F_{\mathfrak{p}}$, we have two embeddings
from $M$ to $F_{\mathfrak{p}}$, which correspond to the two factors
of $F_{\mathfrak{p}}\times F_{\mathfrak{p}}$. We always regard $M$
as a subfield of $F_{\mathfrak{p}}$ by the first embedding, while
the second embedding is denoted by $c:M\hookrightarrow
F_{\mathfrak{p}}$.

 Let $R_{(\Xi)}=R\otimes_{\mathbb{Z}}\mathbb{Z}_{(\Xi)}$. For $\alpha\in
R_{(\Xi)}^\times$, $\theta_{\mathfrak{P}}(\alpha)$ is a
prime-to-$\Xi$ isogeny of $A_{\mathfrak{P}/\bar{\mathbb{F}}_p}$, and
hence induces an endomorphism of $V^{(\Xi)}(A_{\mathfrak{P}})$. We
still denote this endomorphism by $\theta_{\mathfrak{P}}(\alpha)$.
Define a map $\hat{\rho}:R_{(\Xi)}^\times\rightarrow
G(F_{\mathbb{A}^{(\Xi\infty)}})$ such that for each $\alpha\in
R_{(\Xi)}^\times$, $\hat{\rho}(\alpha)$ is given by the formula:
$\eta_{\mathfrak{P}}^{(\Xi)}\circ\hat{\rho}(\alpha)=\theta_{\mathfrak{P}}(\alpha)\circ\eta_{\mathfrak{P}}^{(\Xi)}$.

Fix a prime-to-$\Xi$ polarization $\lambda_{\mathfrak{P}}$ of
$A_{\mathfrak{P}}$ as a representative of
$\bar{\lambda}_{\mathfrak{P}}$. Under the isomorphism
$\theta_{\mathfrak{P}}$, the Rosati involution associated to
$\lambda_{\mathfrak{P}}$ on
$\mathrm{End}^0_F(A_{\mathfrak{P}/\bar{\mathbb{F}}_p})$ induces a
positive involution on filed $M$. As $M$ is CM, this involution must
be the complex conjugation on $M$. Hence for any $\alpha\in
R_{(\Xi)}^\times$,
$\lambda_{\mathfrak{P}}^{-1}\circ\theta_{\mathfrak{P}}(\alpha)^t\circ\lambda_{\mathfrak{P}}=\theta_{\mathfrak{P}}(\bar{\alpha})$.
Then
$\theta_{\mathfrak{P}}(\alpha)^t\circ\lambda_{\mathfrak{P}}\circ\theta_{\mathfrak{P}}(\alpha)=\lambda_{\mathfrak{P}}\circ\theta_{\mathfrak{P}}
(\bar{\alpha})\circ\theta_{\mathfrak{P}}(\alpha)=\lambda_{\mathfrak{P}}\circ\theta_{\mathfrak{P}}(\alpha\bar{\alpha})$.
Since $\alpha\bar{\alpha}\in \mathcal{O}_{(\Xi),+}^\times$,  we have
$\theta_{\mathfrak{P}}(\alpha)^t\circ\bar{\lambda}_{\mathfrak{P}}\circ\theta_{\mathfrak{P}}(\alpha)=\bar{\lambda}_{\mathfrak{P}}$.
So $\theta_{\mathfrak{P}}(\alpha)$ is an isogeny from the quadruple
$(A_{\mathfrak{P}/\bar{\mathbb{F}}_p},\iota_\mathfrak{P},\bar{\lambda}_\mathfrak{P},\eta_\mathfrak{P}^{(\Xi)})$
to
$(A_{\mathfrak{P}/\bar{\mathbb{F}}_p},\iota_\mathfrak{P},\bar{\lambda}_\mathfrak{P},\theta_{\mathfrak{P}}(\alpha)(\eta_\mathfrak{P}^{(\Xi)})
)=(A_{\mathfrak{P}/\bar{\mathbb{F}}_p},\iota_\mathfrak{P},\bar{\lambda}_\mathfrak{P},\hat{\rho}(\alpha)(\eta_\mathfrak{P}^{(\Xi)}))$
in the sense of Definition \ref{def3.1}; in other words, the
automorphism $g=\hat{\rho}(\alpha)$ of the Shimura variety
$Sh^{(\Xi)}_{/\mathcal{W}_\Xi}=Sh^{(\Xi)}_{/\mathbb{Z}_{(\Xi)}}\times_{\mathbb{Z}_{(\Xi)}}\mathcal{W}_\Xi$
fixes the closed point $x_p$.

Denote the formal scheme $\widehat{S}_{p/W_p}$ as the completion of
the Shimura variety $Sh^{(\Xi)}_{/\mathcal{W}_\Xi}$ along the closed
point $x_p$, and $\nu_p:\widehat{S}_{p/W_p}\rightarrow
Sh^{(\Xi)}_{/W_p}$ is the natural morphism. As explained in Section
$3$, $\widehat{S}_{p/W_p}$ is the product of two formal schemes
$\widehat{S}^{ord}_{p/W_p}$ and $\widehat{S}^{ll}_{p/W_p}$,
 and if we fix
an isomorphism  $\mathcal{O}_{\mathfrak{p}}\cong
T_{\mathfrak{p}}(A_\mathfrak{P})$ for each $\mathfrak{p}\in
\Sigma^{ord}$, then $\widehat{S}^{ord}_{p/W_p}$ is isomorphic to
$\Pi_{\mathfrak{p}\in
\Sigma^{ord}}\widehat{\mathbb{G}}_m\otimes_{\mathbb{Z}_p}\mathcal{O}_{\mathfrak{p}}^\ast$.
By deformation theory, we have a Serre-Tate coordinate
$t_{\mathfrak{p}}\in
\widehat{\mathbb{G}}_m\otimes_{\mathbb{Z}_p}\mathcal{O}_{\mathfrak{p}}^\ast$
for each $\mathfrak{p}\in \Sigma^{ord}$. Then for each object
$\mathcal{R}$ in the category $CL_{/W_p}$, and an
$\mathcal{R}$-valued point $x\in \widehat{S}_{p}(\mathcal{R})$, the
Serre-Tate coordinate gives us an element $t_{\mathfrak{p}}(x)\in
\widehat{\mathbb{G}}_m(\mathcal{R})
\otimes_{\mathbb{Z}_p}\mathcal{O}_{\mathfrak{p}}^\ast=(1+\mathfrak{m}_{\mathcal{R}})\otimes_{\mathbb{Z}_p}\mathcal{O}_{\mathfrak{p}}^\ast$,
here $\mathfrak{m}_{\mathcal{R}}$ is the maximal ideal of
$\mathcal{R}$. In particular, when $\mathcal{R}$ is a subring of
$\mathbb{C}_p$, we can consider the $p$-adic logarithm
$log_p:\mathcal{R}\rightarrow\mathbb{C}_p$. Consider the following
map:
$$log_p\otimes Id:(1+\mathfrak{m}_{\mathcal{R}})\otimes_{\mathbb{Z}_p}\mathcal{O}_{\mathfrak{p}}^\ast\rightarrow
\mathbb{C}_p\otimes_{\mathbb{Z}_p}\mathcal{O}_{\mathfrak{p}}^\ast\cong
\mathrm{Hom}(\mathcal{O}_{\mathfrak{p}},\mathbb{C}_p)
\cong\prod_{\sigma:F\rightarrow\bar{\mathbb{Q}},\sigma\thicksim\mathfrak{p}}\mathbb{C}_p.$$
Here the notation $\sigma\thicksim\mathfrak{p}$ means that the
composite map $i_p\circ \sigma: F\rightarrow\bar{\mathbb{Q}}_p$
induces the prime $\mathfrak{p}$ of $F$. For such $\sigma$, let
$\pi_\sigma$ be the projection of
$\Pi_{\sigma:F\rightarrow\bar{\mathbb{Q}},\sigma\thicksim\mathfrak{p}}\mathbb{C}_p$
to its $\sigma$-factor. Then we get an element
$\tau_\sigma(x)=\pi_\sigma\circ(log_p\otimes
Id)(t_{\mathfrak{p}}(x))\in \mathbb{C}_p$. The association $x\in
\widehat{S}_{p}(\mathcal{R})\mapsto \tau_\sigma(x)\in\mathbb{C}_p$
gives $p$-adic rigid analytic functions on the rigid analytic space
$(\widehat{S}^{ord}_{p})^{p-an}$ associated to
$\widehat{S}^{ord}_{p}$.

Since the action of  $g=\hat{\rho}(\alpha)$ on the Shimura variety
$Sh^{(\Xi)}_{/\mathcal{W}_\Xi}$ fixes the closed point $x_p$, this
action also preserves the formal schemes $\widehat{S}^{ord}_{p}$ and
$\widehat{S}^{ll}_{p}$, and hence $g=\hat{\rho}(\alpha)$ acts on the
function $\tau_\sigma$ for each $\sigma\thicksim\mathfrak{p}$,
$\mathfrak{p}\in \Sigma^{ord}$. By \cite{H10} Lemma $3.3$, the
action of $g=\hat{\rho}(\alpha)$ on the Serre-Tate coordinate
$t_\mathfrak{p}$ is given by the formula
$g(t_\mathfrak{p})=t_\mathfrak{p}^{\alpha^{1-c}}$. (See the
explanation after Lemma \ref{4.1} for the two embeddings of $M$ to
$F_\mathfrak{p}$). Then by the construction of $\tau_\sigma$, we see
that the action of $g=\hat{\rho}(\alpha)$ on the function
$\tau_\sigma$ is given by the formula:
$g(\tau_\sigma)=\tau_\sigma\circ \hat{\rho}(\alpha)=i_p\circ
\sigma(\alpha^{1-c})\cdot\tau_\sigma$. We remark here that $i_p\circ
\sigma:F\rightarrow \bar{\mathbb{Q}}_p$ naturally extends to an
embedding $i_p\circ \sigma:F_\mathfrak{p}\rightarrow
\bar{\mathbb{Q}}_p$, and hence the expression $i_p\circ
\sigma(\alpha^{1-c})$ is well defined. As in \cite{H12}, the
function $\tau_\sigma$ is called a $\hat{\rho}$-eigen
$\sigma$-coordinate.

Now consider the original point $x\in
Sh^{(\Xi)}(\widetilde{\mathcal{W}}_\Xi)$, which is represented by
the quadruple
$(A_{/\widetilde{\mathcal{W}}_\Xi},\iota_,\bar{\lambda},\eta^{(\Xi)})$.
Assume that we have a prime $\mathfrak{p}\in \Sigma^{ord}$, such
that the exact sequence of Barsotti-Tate
$\mathcal{O}_{\mathfrak{p}}$-modules :
$$0\rightarrow\mu_{p^\infty}\otimes_{\mathbb{Z}_p}\mathcal{O}_{\mathfrak{p}}^\ast \rightarrow A[\mathfrak{p}^\infty]\rightarrow F_{\mathfrak{p}}/\mathcal{O}_{\mathfrak{p}}\rightarrow 0$$
splits over over $\widetilde{W}_p$. In this case, the Serre-Tate
coordinate $t_{\mathfrak{p}}(x)$ for the prime $\mathfrak{p}$ at the
point $x$ must be a p-th power root of unity (see \cite{Br} Section
$7$ or \cite{ECAI} Section $5.3.4$ for a proof of this fact,
although only the coordinate of an elliptic curve is discussed
there, the same argument works in the higher dimension as well).
Hence for the $\hat{\rho}$-eigen coordinate
 we have $\tau_\sigma(x)=1$ for
all $\sigma\sim\mathfrak{p}$.

We can also regard $x\in Sh^{(\Xi)}(\widetilde{\mathcal{W}}_\Xi)$ as a $\widetilde{W}_p$-rational point. Then $x$ actually sits in the formal scheme
 $\widehat{S}_{p/W_p}$,
in other words, if we regard $x$ as a morphism
$\mathrm{Spec}(\widetilde{\mathcal{W}}_\Xi)\rightarrow Sh^{(\Xi)}$,
then this morphism factors through $\nu_p:\widehat{S}_p\rightarrow
Sh^{(\Xi)}$.

Let $(A_p^{univ},\iota_p^{univ},\phi_p^{univ})$ be the universal
object over $\widehat{S}_p$. Let $\pi_p: A_p^{univ}\rightarrow
\widehat{S}_p$ be the structure morphism and
$e_p:\widehat{S}_p\rightarrow A_p^{univ}$ be the morphism
corresponding to the identity element. Consider the sheaf
$\omega_p^{univ}=(\pi_{p})_\ast(\Omega_{A_p^{univ}/\widehat{S}_p})=e_p^\ast(\Omega_{A_p^{univ}/\widehat{S}_p})$
over $\widehat{S}_{p/W_p}$, which has a natural
$\mathcal{O}_{\widehat{S}_p}\otimes_{\mathbb{Z}}\mathcal{O}_F$-module
structure, and compatible with arbitrary base change. Set
$(\omega_p^{univ})^{\otimes 2}=
\omega_p^{univ}\otimes_{(\mathcal{O}_{\widehat{S}_p}\otimes_{\mathbb{Z}}\mathcal{O}_F)}\omega_p^{univ}$.
Then we have the Kodaira-Spencer map:
$$KS:(\omega_p^{univ})^{\otimes 2}\rightarrow \Omega_{\widehat{S}_p/W_p}.$$
We remark here that the Kodaira-Spencer map is $\mathcal{O}_{\widehat{S}_p}\otimes_{\mathbb{Z}}\mathcal{O}_F$-linear and compatible with the
$g=\hat{\rho}(\alpha)$-action on both sides.

By the isomorphism $\widehat{S}_p\cong \widehat{S}^{ord}_p\times
\widehat{S}^{ll}_p$ over $W_p$, we have the decomposition:
$\Omega_{\widehat{S}_p/W_p}=(\pi^{ord})^\ast\Omega_{\widehat{S}^{ord}_p/W_p}\oplus(\pi^{ll})^\ast\Omega_{\widehat{S}^{ll}_p/W_p}$,
where $\pi^{ord}:\widehat{S}_p\rightarrow \widehat{S}^{ord}_p$ and
$\pi^{ll}:\widehat{S}_p\rightarrow \widehat{S}^{ll}_p$ are the
natural projection. Since $\widehat{S}^{ord}_p\cong
\Pi_{\mathfrak{p}\in\Sigma^{ord}}\widehat{\mathbb{G}}_m\otimes_{\mathbb{Z}_p}\mathcal{O}_\mathfrak{p}^\ast$,
if we set
$\widehat{S}_\mathfrak{p}=\widehat{\mathbb{G}}_m\otimes_{\mathbb{Z}_p}\mathcal{O}_\mathfrak{p}^\ast$,
then we have
$\Omega_{\widehat{S}^{ord}_p/W_p}=\oplus_{\mathfrak{p}\in\Sigma^{ord}}(\pi_\mathfrak{p})^\ast\Omega_{\widehat{S}_\mathfrak{p}/W_p}$,
where $\pi_\mathfrak{p}: \widehat{S}^{ord}_p\rightarrow
\widehat{S}_\mathfrak{p}$ is the natural projection. To express the
$g$-action on $\Omega_{\widehat{S}^{ord}_p/W_p}$ in a simple way, we
base change this  module to $\widetilde{K}_p$, i.e. we consider
$\Omega_{\widehat{S}^{ord}_p/W_p}\otimes_{W_p}\widetilde{K}_p=\Omega_{\widehat{S}^{ord}_p/\widetilde{K}_p}
=\oplus_{\mathfrak{p}\in\Sigma^{ord}}\Omega_{\widehat{S}_\mathfrak{p}/\widetilde{K}_p}$,
which is free of finite rank over
$(\widehat{S}^{ord}_p)_{/\widetilde{K}_p}$. Moreover, for each
$\mathfrak{p}\in \Sigma^{ord}$, the set $\{d\tau_\sigma|\tau\sim
\mathfrak{p}\}$ forms a basis of the module
$\Omega_{\widehat{S}_\mathfrak{p}/\widetilde{K}_p}$ over
$\widehat{S}_\mathfrak{p}$, here $\tau_\sigma$'s are the
$\hat{\rho}$-eigen coordinates constructed above.

On the other hand, we consider the cotangent bundle $(\omega_p^{univ})^{\otimes 2}\otimes_{W_p}\widetilde{K}_p=(\widetilde{\omega}_p^{univ})^{\otimes 2}$, which has a natural
$\mathcal{O}_F\otimes_\mathbb{Z}\widetilde{K}_p$-module structure. By our construction of $\widetilde{K}_p$, for any embedding
$\sigma: F\rightarrow\bar{\mathbb{Q}}$, $\sigma(\mathcal{O}_F)$ is contained in $i_p^{-1}(\widetilde{K}_p)$. Hence we have the isomorphism
$\mathcal{O}_F\otimes_\mathbb{Z}\widetilde{K}_p\cong\Pi_{\sigma: F\rightarrow\bar{\mathbb{Q}}}\widetilde{K}_p$. By this isomorphism we can
decompose the $\mathcal{O}_F\otimes_\mathbb{Z}\widetilde{K}_p$-module $(\widetilde{\omega}_p^{univ})^{\otimes 2}$ as $(\widetilde{\omega}_p^{univ})^{\otimes 2}
=\oplus_{\sigma: F\rightarrow\bar{\mathbb{Q}}}(\widetilde{\omega}_p^{univ})^{\otimes 2\sigma}$ such that on the bundle $(\widetilde{\omega}_p^{univ})^{\otimes 2\sigma}$,
$\mathcal{O}_F$ acts through the embedding $\sigma$.

Then by \cite{K1} section $1.0$, for each $\mathfrak{p}\in
\Sigma^{ord}$, the Kodaira-Spencer map induces an isomorphism
$$\bigoplus_{\sigma:F\rightarrow\bar{\mathbb{Q}},\sigma\sim \mathfrak{p}}(\widetilde{\omega}_p^{univ})^{\otimes 2\sigma}\rightarrow(\pi_\mathfrak{p}\circ\pi^{ord})^\ast\Omega_{\widehat{S}_\mathfrak{p}/\widetilde{K}_p},$$
under which the bundle $(\widetilde{\omega}_p^{univ})^{\otimes
2\sigma}$ corresponds to the sub-bundle generated by $d\tau_\sigma$.
Hence the action of $g=\hat{\rho}(\alpha)$ preserves each
$(\widetilde{\omega}_p^{univ})^{\otimes 2\sigma}$ and acts it by
multiplying the scalar $i_p\circ \sigma(\alpha^{1-c})$. Moreover, as
we assume that $\tau_\sigma(x)=0$ for all $\sigma\sim\mathfrak{p}$,
$g$ also preserves $(\widetilde{\omega}_p^{univ})^{\otimes
2\sigma}(x)$.

Now we can state the main result in this section:

\begin{theorem}\label{Th2}
Fix an embedding $\sigma_1:F\rightarrow\bar{\mathbb{Q}}$, such that
$i_p\circ\sigma_1$ induces $\mathfrak{p}$. If there exists some
prime $l\neq p$ in $\Xi$, such that the prime $\mathfrak{l}$ induced
from $i_l\circ\sigma_1$ belongs to $\Sigma_l^{ord}$, then we have an
isomorphism of $F$-algebras:
$\mathrm{End}^0_F(A_{\mathfrak{P}/\bar{F}_p})\cong\mathrm{End}^0_F(A_{\mathfrak{L}/\bar{F}_l})$.
Here $A_{\mathfrak{L}/\bar{F}_l}$ sits in the quadruple
$(A_{\mathfrak{L}/\bar{\mathbb{F}}_l},\iota_\mathfrak{L},\bar{\lambda}_\mathfrak{L},\eta_\mathfrak{L}^{(\Xi)})$
obtained by mod $l$ reduction of the point $x\in
Sh^{(\Xi)}(\widetilde{\mathcal{W}}_\Xi)$.
\end{theorem}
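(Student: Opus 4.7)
The plan is to apply Lemma~\ref{4.1} at both primes $p$ and $l$, producing CM quadratic extensions $M_p$ and $M_l$ of $F$ together with $\theta_{\mathfrak{P}}:M_p\cong\mathrm{End}^0_F(A_\mathfrak{P})$, $\theta_{\mathfrak{L}}:M_l\cong\mathrm{End}^0_F(A_\mathfrak{L})$, the associated orders $R_p\subset M_p$, $R_l\subset M_l$, and the embeddings $\hat\rho_p$, $\hat\rho_l:R_{\cdot,(\Xi)}^\times\to G(F_{\mathbb{A}^{(\Xi\infty)}})$; the goal is then to build an $F$-algebra isomorphism $M_p\cong M_l$.

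First I would exploit the splitting of $A[\mathfrak{p}^\infty]$ over $\widetilde W_p$ (equivalently $\tau_\sigma(x)=0$ for $\sigma\sim\mathfrak{p}$): by Serre--Tate theory in the $\mathfrak{p}$-direction, $A[\mathfrak{p}^\infty]_{/\widetilde W_p}$ is the canonical lift of $A_\mathfrak{P}[\mathfrak{p}^\infty]$ as a Barsotti--Tate $\mathcal{O}_\mathfrak{p}$-module, so the CM-action $\theta_\mathfrak{P}(\mathcal{O}_{M_p})$ on $A_\mathfrak{P}[\mathfrak{p}^\infty]$ lifts uniquely to an $\mathcal{O}_{M_p}$-action on $A[\mathfrak{p}^\infty]_{/\widetilde W_p}$. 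Using the rigidity for endomorphisms of abelian schemes over normal integral bases (Faltings' lemma, as invoked in Section~2), one then descends this action from $\widetilde W_p$ to $\widetilde{\mathcal{W}}_\Xi$ and specializes modulo $\mathfrak{L}$, producing an action of $M_p\otimes_F F_\mathfrak{p}$ on the \'{e}tale group $A_\mathfrak{L}[\mathfrak{p}^\infty]$ which automatically commutes with the geometric Frobenius $\mathrm{Frob}_\mathfrak{L}$.

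On the other hand, Lemma~\ref{4.1} at $l$ gives an $M_l$-action on $A_\mathfrak{L}$, hence on $V_\mathfrak{p}(A_\mathfrak{L})=T_\mathfrak{p}(A_\mathfrak{L})\otimes_{\mathbb{Z}_p}\mathbb{Q}_p$, which also commutes with $\mathrm{Frob}_\mathfrak{L}$. Thus $M_p\otimes_F F_\mathfrak{p}$ and $M_l\otimes_F F_\mathfrak{p}$ both embed as two-dimensional commutative $F_\mathfrak{p}$-subalgebras of the centralizer of $\mathrm{Frob}_\mathfrak{L}$ in $\mathrm{End}_{F_\mathfrak{p}}(V_\mathfrak{p}(A_\mathfrak{L}))$; since this centralizer is itself two-dimensional over $F_\mathfrak{p}$, the two subalgebras coincide, giving $M_p\otimes_F F_\mathfrak{p}\cong M_l\otimes_F F_\mathfrak{p}$. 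To upgrade this local identification to a global $F$-algebra isomorphism, I would compare the scaling characters of $R_p^\times$ and $R_l^\times$ on the common characteristic-zero cotangent line $(\widetilde\omega^{\otimes 2\sigma_1})(x)\otimes\widetilde{\mathcal{K}}_\Xi$: the former is $\alpha\mapsto\sigma_1(\alpha^{1-c_p})$ (from the eigencoordinate computation at $p$ established earlier) and the latter, by the parallel analysis at $l$, is $\beta\mapsto\sigma_1(\beta^{1-c_l})$. Matching these characters on the algebraic line forces an identification of the quadratic $F$-algebras $M_p\cong M_l$.

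The hardest step is the middle one: turning the formal Serre--Tate lift of the CM-action at $\mathfrak{p}$ into a bona fide algebraic $\mathcal{O}_{M_p}$-action on the Barsotti--Tate group $A[\mathfrak{p}^\infty]_{/\widetilde{\mathcal{W}}_\Xi}$, rather than merely over $\widetilde W_p$, and controlling its specialization at $\mathfrak{L}$ so that it becomes compatible with $\mathrm{Frob}_\mathfrak{L}$. Here the condition $(\mathrm{DP})$ of Definition~\ref{2.3} together with the Faltings-type rigidity established in Sections~2--3 are the essential tools, but the bookkeeping across two different characteristics is the delicate point.
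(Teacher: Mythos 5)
Your proposal diverges substantially from the paper's argument and contains gaps that are likely fatal rather than merely ``delicate.''

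The most serious problem is the middle step, which you yourself flag as hardest. The Serre--Tate canonical lift gives you an $\mathcal{O}_{M_p}$-action on the Barsotti--Tate group $A[\mathfrak{p}^\infty]$ over the \emph{completion} $\widetilde W_p$; it does \emph{not} give an action over $\widetilde{\mathcal{W}}_\Xi$, and Faltings' lemma cannot manufacture one. Faltings' rigidity result (restriction to the generic fiber is a bijection on $\mathrm{End}$) is a statement about endomorphisms of abelian schemes over a normal integral base; it gives \emph{injectivity} of specialization, not a descent of an endomorphism from a completed local ring $\widetilde W_p$ to the global semilocal ring $\widetilde{\mathcal{W}}_\Xi$. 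Indeed, if such a descent were available, the lifted CM action would spread out to an $M_p$-action on the $\widetilde{\mathcal{K}}_\Xi$-generic fiber of $A$ --- but the generic fiber of $A$ has no extra endomorphisms beyond $F$ precisely because we are in the non-CM case. So the object you want to specialize at $\mathfrak{L}$ does not exist. A second gap: even if one granted an action of $M_p\otimes_F F_\mathfrak{p}$ on $V_\mathfrak{p}(A_\mathfrak{L})$ commuting with Frobenius, comparing it with the $M_l$-action only produces a local isomorphism $M_p\otimes_F F_\mathfrak{p}\cong M_l\otimes_F F_\mathfrak{p}$ of \'{e}tale $F_\mathfrak{p}$-algebras, which is far from determining the quadratic $F$-extension globally. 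Your attempted upgrade via ``matching scaling characters'' is circular: the one character is evaluated on $\alpha\in R_p^\times\subset M_p^\times$ and the other on $\beta\in R_l^\times\subset M_l^\times$, so there is no common domain on which to compare them until the isomorphism you are trying to prove is already in hand.

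The paper's proof follows a different route that avoids these difficulties entirely. It never tries to spread out the lifted CM action; instead it uses that $g=\hat\rho(\alpha)$ is an \emph{automorphism of the whole Shimura variety} over $\mathbb{Z}_{(\Xi)}$, hence acts simultaneously on the cotangent bundle over the global ring $\widetilde{\mathcal{K}}_\Xi$ and on both special-fiber deformation spaces. The splitting at $p$ forces $g$ to preserve the global eigenline $\tilde\omega^{\otimes 2\sigma_1}$, hence also its base change to $\widetilde K_l$, i.e.\ the fiber $(\widetilde\omega_l^{univ})^{\otimes 2\sigma_1}(x)$. Since $g$ preserves a distinguished sub-bundle of the cotangent space at $x_l$, it cannot send $x_l$ to a different $\bar{\mathbb{F}}_l$-point; once $g$ fixes $x_l$, the universality of the moduli problem realizes $g$ as an honest endomorphism $\tilde\theta_{\mathfrak{L}}(\alpha)$ of $A_{\mathfrak{L}}$ respecting level structure, giving a global embedding $M_p\hookrightarrow\mathrm{End}^0_F(A_{\mathfrak{L}/\bar{\mathbb{F}}_l})$ which is an isomorphism for dimension reasons. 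This ``rigidity of the Hecke automorphism'' step is the genuinely new idea here, and it is exactly what your plan is missing.
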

\begin{proof}
Set $\omega=\pi_\ast(\Omega_{A/\widetilde{\mathcal{W}}_\Xi}^1)$, which is naturally an $\mathcal{O}_F\otimes_{\mathbb{Z}}\widetilde{\mathcal{W}}_\Xi$-module.
Again we set $\omega^{\otimes 2}=\omega\otimes_{(\mathcal{O}_F\otimes_{\mathbb{Z}}\widetilde{\mathcal{W}}_\Xi)}\omega$. The base change
$\omega^{\otimes 2}\otimes_{\widetilde{\mathcal{W}}_\Xi}\widetilde{\mathcal{K}}_\Xi$ is an $\mathcal{O}_F\otimes_{\mathbb{Z}}\widetilde{\mathcal{K}}_\Xi$-module.
By our construction of $\widetilde{\mathcal{K}}_\Xi$, we have an isomorphism:
$$\mathcal{O}_F\otimes_{\mathbb{Z}}\widetilde{\mathcal{K}}_\Xi\cong\bigoplus_{\sigma:F\rightarrow\bar{\mathbb{Q}}}\widetilde{\mathcal{K}}_\Xi.$$
From this we have the decomposition: $\omega^{\otimes 2}\otimes_{\widetilde{\mathcal{W}}_\Xi}\widetilde{\mathcal{K}}_\Xi=\oplus_{\sigma:F\rightarrow\bar{\mathbb{Q}}}\tilde{\omega}^{\otimes 2\sigma}$.

Since the formation of the cotangent sheaf $\omega_p^{univ}$ over
$\widehat{S}_p$ is compatible with arbitrary base change, by the
Cartesian diagram:

$$\xymatrix{
  A \ar[d]_{} \ar[r]^{}
                & A_p^{univ} \ar[d]^{}  \\
  \mathrm{Spec}(\widetilde{K}_p)  \ar[r]^{x}
                & \widehat{S}_p, } \qquad $$ we see that $\tilde{\omega}^{\otimes
2\sigma}\otimes_{\widetilde{\mathcal{K}}_\Xi}\widetilde{K}_p=(\omega_p^{univ})^{\otimes
2\sigma}(x)$. As $g=\hat{\rho}(\alpha)$ acts on the Shimura variety
$Sh^{(\Xi)}_{/\widetilde{\mathcal{W}}_\Xi}$, $g$ sends the bundle
$\omega^{\otimes
2}\otimes_{\widetilde{\mathcal{W}}_\Xi}\widetilde{\mathcal{K}}_\Xi$
and hence each factor $\tilde{\omega}^{\otimes 2\sigma}$ to the
corresponding bundles over $g(x)$. As $g$ preserves
$(\omega_p^{univ})^{\otimes 2\sigma}(x)$ for all $\sigma\sim
\mathfrak{p}$, it also preserves $\tilde{\omega}^{\otimes 2\sigma}$.
In particular, $g$ preserves $\tilde{\omega}^{\otimes 2\sigma_1}$.

As $\tilde{\omega}^{\otimes
2\sigma_1}\otimes_{\widetilde{\mathcal{K}}_\Xi}\widetilde{K}_l=(\widetilde{\omega}_l^{univ})^{\otimes2\sigma_1}(x)$,
$g$ also preserves the fiber
$(\widetilde{\omega}_l^{univ})^{\otimes2\sigma_1}(x)$ of the bundle
$(\widetilde{\omega}_l^{univ})^{\otimes2\sigma_1}$ at the point
$x_l$ and acts on it by multiplication by
$i_l\circ\sigma_1(\alpha)$. Hence $g$ must  act on the eigen
coordinate $\tau_{\sigma_1,l}(x)$ by multiplying
$i_l\circ\sigma_1(\alpha)$, and $g$ preserves the sub-bundle of
$\Omega_{\widehat{S}_l/W_l}(x)$ generated by
$d\tau_{\sigma_1,l}(x)$. If $g$ sends $x_l\in
Sh^{(\Xi)}(\bar{\mathbb{F}}_l)$ to another point $x_l'\neq x_l$, the
action of $g$ has to move the deformation space $\widehat{S}_l$ over
$x_l$ to the deformation space $\widehat{S}'_l$ over $x'_l$, where
$\widehat{S}'_l$ is the completion of
$Sh^{(\Xi)}_{/\widetilde{\mathcal{W}}_\Xi}$  along the closed point
$x_l'$. Then $g$ induces an isomorphism of cotangent bundles
$g:\Omega_{\widehat{S}_l/W_l}(x)\rightarrow\Omega_{\widehat{S}'_l/W_l}(g(x))$
and hence $g$ cannot preserve any sub-bundle of
$\widehat{S}^{ord}_{l/W_l}(x)$, which is a contradiction.
 So $g$ fixes the point $x_l$, i.e.
there exists a prime-to-$\Xi$ isogney
$\tilde{\theta}_{\mathfrak{L}}(\alpha)$ of $A_\mathfrak{L}$,such
that $\tilde{\theta}_{\mathfrak{L}}(\alpha)\circ
\eta_\mathfrak{L}^{(\Xi)}=\eta_\mathfrak{L}^{(\Xi)}\circ
\hat{\rho}(\alpha)$, and hence establishes an isomorphism from the
quadruple
$(A_{\mathfrak{L}/\bar{\mathbb{F}}_l},\iota_\mathfrak{L},\bar{\lambda}_\mathfrak{L},\eta_\mathfrak{L}^{(\Xi)})$
to the quadruple
$(A_{\mathfrak{L}/\bar{\mathbb{F}}_l},\iota_\mathfrak{L},\bar{\lambda}_\mathfrak{L},\eta_\mathfrak{L}^{(\Xi)}\circ
\hat{\rho}(\alpha))$. The association $\alpha\mapsto
\tilde{\theta}_{\mathfrak{L}}(\alpha)$ gives us an embedding
$M\hookrightarrow \mathrm{End}^0_F(A_{\mathfrak{L}/\bar{F}_l})$.
Since $\mathrm{End}^0_F(A_{\mathfrak{L}/\bar{F}_l})$ is also a CM
quadratic extension of $F$ by Lemma \ref{4.1}, this embedding must
be an isomorphism. Hence we get the desired isomorphism of
$F$-algebras.

\end{proof}

\section{Main result on local indecomposability and applications}
Let $k$ be a number field. Suppose that we are given an abelian variety $A_{/k}$ and an algebra homomorphism $\iota: \mathcal{O}_F\rightarrow\mathrm{End}(A_{/k})$.
Assume that there is a prime ideal $\mathfrak{P}$ of $k$ over a rational prime $p$, such that $A_{/k}$  satisfies
the condition ($\mathrm{NLL}$) in section $1$.
Let $I_\mathfrak{P}$ be the inertia group of $\mathrm{Gal}(\bar{\mathbb{Q}}/k)$ at the prime $\mathfrak{P}$.
Now we can state and prove the main theorem in this paper:

\begin{theorem}\label{Th3}
Under the above notations and assumptions, suppose further that
$A_{/\bar{\mathbb{Q}}}=A_{/k}\times_k \bar{\mathbb{Q}}$ does not
have complex multiplication, then for any $\mathfrak{p}\in
\Sigma_p^{ord}$, the $\mathfrak{p}$-adic Tate module
$T_{\mathfrak{p}}(A)$ of $A$ is indecomposable as an
$I_\mathfrak{P}$-module.
\end{theorem}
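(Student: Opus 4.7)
The plan is to argue by contradiction: assume that $T_\mathfrak{p}(A)$ is decomposable as an $I_\mathfrak{P}$-module and produce, using the machinery of Sections~2--4, a CM quasi-endomorphism of $A_{/k}$, contradicting the no-CM hypothesis.

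First I would put the problem on the Shimura variety. By Proposition~\ref{2.6}, replace $A_{/k}$ by an isogenous $\mathfrak{c}$-polarized AVRM $(A',\iota',\varphi)$ over $\mathcal{O}_{(\mathfrak{P})}$; this leaves $T_\mathfrak{p}$, the inertia action, and the CM property unaffected (since quasi-isogenies induce $I_\mathfrak{P}$-equivariant isomorphisms of rational Tate modules and $\mathrm{End}^0$). After enlarging $k$ to a finite extension (which only shrinks $I_\mathfrak{P}$ to an open subgroup and therefore cannot turn a decomposable module into an indecomposable one) and choosing a finite set of primes $\Xi\ni p$ containing the residual characteristics of bad reduction, the integral level structure of Definition~\ref{def3.2} exists and yields a point $x\in Sh^{(\Xi)}(\widetilde{\mathcal{W}}_k)$ reducing at $\mathfrak{P}$ to the closed point $x_p$.

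Second, convert the decomposability hypothesis into a Serre--Tate statement. A splitting of the Barsotti--Tate $\mathcal{O}_\mathfrak{p}$-extension
$$0\to \mu_{p^\infty}\otimes_{\mathbb{Z}_p}\mathcal{O}_\mathfrak{p}^{\ast}\to A[\mathfrak{p}^\infty]\to F_\mathfrak{p}/\mathcal{O}_\mathfrak{p}\to 0$$
over $\widetilde{W}_p$ (equivalent to $I_\mathfrak{P}$-decomposability after a finite unramified base change) means, via Proposition~\ref{3.3} and the deformation-theoretic identifications of Section~3, that the Serre--Tate coordinate $t_\mathfrak{p}(x)$ is a $p$-power root of unity; equivalently, the $\hat{\rho}$-eigen coordinates satisfy $\tau_\sigma(x)=0$ for every $\sigma\sim\mathfrak{p}$. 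This is exactly the standing hypothesis needed in the statement of Theorem~\ref{Th2}. Combined with Lemma~\ref{4.1} (which is applicable because $\mathfrak{p}\in\Sigma_p^{ord}$ ensures $x_p$ is not supersingular), we get the CM field $M=\mathrm{End}^0_F(A_\mathfrak{P})$ and the torus map $\hat{\rho}:R_{(\Xi)}^\times\to G(F_{\mathbb{A}^{(\Xi\infty)}})$ fixing $x_p$.

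Third, apply Theorem~\ref{Th2} at several auxiliary primes. For every embedding $\sigma_1$ inducing $\mathfrak{p}$ I would enlarge $\Xi$ to contain a rational prime $l\neq p$ such that $i_l\circ\sigma_1$ induces an ordinary prime $\mathfrak{l}$ of the reduction $A_\mathfrak{L}$; such $l$ exist (in fact in positive density) by the non-CM/Serre-type ordinary-reduction results applied to $A$. Theorem~\ref{Th2} then identifies $\mathrm{End}^0_F(A_\mathfrak{L})$ with the same CM field $M$, and shows that $g=\hat{\rho}(\alpha)$ fixes the reduction $x_l\in Sh^{(\Xi)}(\bar{\mathbb{F}}_l)$ for every $\alpha\in R_{(\Xi)}^\times$. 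Iterating this for a dense enough family of such $l$ pins down the action of $g$ on $x$: because $g$ fixes $x_l$ at these primes and, by construction, also acts trivially on the relevant $\hat{\rho}$-eigen coordinates there, the translate $g\cdot x$ must coincide with $x$ in $Sh^{(\Xi)}(\widetilde{\mathcal{K}}_\Xi)$. By the moduli interpretation of $Sh^{(\Xi)}$ this produces an $\mathcal{O}_F$-linear quasi-isogeny $\theta(\alpha):A\to A$ defined over $\widetilde{\mathcal{K}}_\Xi$, and hence over $k$ by Faltings' rigidity lemma already invoked in Section~2, giving an embedding $M\hookrightarrow\mathrm{End}^0_F(A_{/k})$. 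Since $[M:\mathbb{Q}]=2d=2\dim A$, this is complex multiplication, contradicting the hypothesis.

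The main obstacle is precisely the last bootstrap: upgrading the pointwise fixed-point information ``$g$ fixes $x_l$'' at many reductions to ``$g$ fixes the generic point $x$'' in characteristic zero. This is where one must combine the triviality of the Serre--Tate coordinates at $\mathfrak{p}$ (which controls the $\mathfrak{p}$-direction in $\widehat{S}_p^{ord}$), the analogous triviality at enough auxiliary primes $\mathfrak{l}$ supplied by Theorem~\ref{Th2}, and a rigidity/density input to rule out any nontrivial motion of $x$ inside $Sh^{(\Xi)}$ under $\hat{\rho}(R_{(\Xi)}^\times)$; only once this is established does one descend the CM action from the reductions back to $A_{/k}$ via Faltings.
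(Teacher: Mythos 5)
Your opening two steps match the paper: move to a $\mathfrak{c}$-polarized AVRM over $\widetilde{\mathcal{W}}_k$ via Proposition~\ref{2.6}, pick up a point $x\in Sh^{(\Xi)}(\widetilde{\mathcal{W}}_k)$, and convert the assumed $I_\mathfrak{P}$-decomposability of $T_\mathfrak{p}(A)$ into vanishing of the $\hat\rho$-eigen coordinates $\tau_\sigma(x)$ for $\sigma\sim\mathfrak{p}$, so that Lemma~\ref{4.1} and Theorem~\ref{Th2} become available. That part is correct and is what the paper does.

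Where your argument diverges — and breaks down — is the third step. You want to upgrade ``$g=\hat\rho(\alpha)$ fixes $x_l$ at a dense family of reductions'' to ``$g$ fixes the generic point $x$ in characteristic~$0$'' and then descend a CM quasi-endomorphism to $A_{/k}$ via Faltings. But $\Xi$ is necessarily a \emph{finite} set, $Sh^{(\Xi)}$ is only defined over $\mathbb{Z}_{(\Xi)}$, and the Hecke automorphism $g=\hat\rho(\alpha)$ is attached to $R_{(\Xi)}^\times$ with respect to that fixed $\Xi$; enlarging $\Xi$ changes the ambient scheme and the automorphism. What Theorem~\ref{Th2} gives you for a finite $\Xi$ is a fixed point of $g$ at finitely many special fibers $x_l$, and since the fixed locus of $g$ in $Sh^{(\Xi)}$ is a closed subscheme which typically has positive codimension, containing a handful of closed points tells you nothing about the generic point of $\overline{\{x\}}$. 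Indeed, $g(x)=x$ in characteristic zero would literally produce a CM endomorphism of $A$, so that conclusion has to be the payoff of a complete proof — not an intermediate claim one can hope to verify directly. You honestly flag the bootstrap as ``the main obstacle''; it is a genuine gap, and the density/rigidity heuristic you invoke does not close it.

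The paper sidesteps this issue entirely. Rather than lifting a CM endomorphism, it chooses \emph{two} auxiliary primes $\mathfrak{L}$ over $l$ and $\mathfrak{Q}$ over $q$ with $\mathfrak{p}$-compatible ordinary reductions but with \emph{non-isomorphic} CM fields $M_\mathfrak{L}=\mathrm{End}^0_F(A_\mathfrak{L})$ and $M_\mathfrak{Q}=\mathrm{End}^0_F(A_\mathfrak{Q})$. This is arranged using the density result (Hida's Proposition~7.1) for $\mathfrak{L}$, then Faltings' Tate-conjecture input plus a Ribet-style large-image argument plus Chebotarev density to find $\mathfrak{Q}$ where the Frobenius centralizer lies in a non-isomorphic anisotropic torus, forcing $M_\mathfrak{Q}\ncong M_\mathfrak{L}$. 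With $\Xi=\{p,q,l\}$, Theorem~\ref{Th2} applied to both auxiliary primes shows $M_\mathfrak{L}\cong\mathrm{End}^0_F(A_\mathfrak{P})\cong M_\mathfrak{Q}$, which is the desired contradiction. The non-CM hypothesis thus enters through the size of the $\ell$-adic Galois image (so that the CM field of the reduction can vary), not through any attempt to lift a CM structure to characteristic zero. You should replace the bootstrap step by this two-prime comparison.

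Two smaller remarks. First, you cite ``non-CM/Serre-type ordinary reduction results'' for the existence of $\mathfrak{L}$; the precise input is Hida's Proposition~7.1 in \cite{H12}, which the paper quotes. Second, the paper also reduces to $A_{/\bar{\mathbb{Q}}}$ simple at the outset via Proposition~\ref{1.1}; your write-up omits this, though it is not the central issue.
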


\begin{proof}
From Proposition \ref{1.1}, the abelian variety $A_{/\bar{\mathbb{Q}}}$ is isotypic. Hence without loss of generality, we can assume that $A_{/\bar{\mathbb{Q}}}$
is simple.

Fix an embedding $\sigma: F\rightarrow \bar{\mathbb{Q}}$ such that
the composition $i_l\circ \sigma$ induces the prime $\mathfrak{p}$.
From \cite{H12} Proposition $7.1$, the set
$$\{\mathfrak{L}|\mathfrak{L}\text{~is a prime of~} k\text{~over a rational
prime~} l\neq p \text{~such that~} A_{/k} \text{~has good reduction
at~} \mathfrak{L},\text{~and~}\Sigma_{l}^{ord}\neq\emptyset\}$$ has
Dirichlet density $1$. On the other hand, the
 primes $\mathfrak{l}$ in $F$ which splits completely over $\mathbb{Q}$ also has Dirichlet density $1$,
  we can find a prime $\mathfrak{L}$ of $k$ over a rational prime $l$ such that:
\begin{enumerate}
\item $l$ is unramified in $F$;
\item $A_{/k}$ has good reduction at $\mathfrak{L}$ and $\Sigma_l^{ord}$ contains the prime $\mathfrak{l}$ induced by $i_l\circ \sigma$ and
$\mathfrak{l}$ splits over $\mathbb{Q}$.
\end{enumerate}
Let $A_{\mathfrak{L}/\bar{\mathbb{F}}_l}$ be the reduction of $A_{/k}$ at $\mathfrak{L}$, and set $M_{\mathfrak{L}}=\mathrm{End}_F^0(A_{\mathfrak{L}/\bar{\mathbb{F}}_l})$.
By Lemma \ref{4.1}, $M_{\mathfrak{L}}$ is a quadratic CM extension of the field $F$.

Now we use an argument in \cite{H12} Proposition $5.1$ to prove the following statement:
 we can find a prime $\mathfrak{Q}$ of $k$ over a rational prime $q\neq p,l$, such that
\begin{enumerate}
\item $A_{/k}$ has good reduction at $\mathfrak{Q}$;
\item $\Sigma_q^{ord}$ contains the prime induced by $i_q\circ \sigma$;
\item $M_{\mathfrak{Q}}=\mathrm{End}_F^0(A_{\mathfrak{Q}/\bar{\mathbb{F}}_q})$ is a CM quadratic extension of $F$ which is non-isomorphic to $M_{\mathfrak{L}}$.
\end{enumerate}

For simplicity, we use $D$ to denote the division algebra $\mathrm{End}^0(A_{/k})$. Let $Z$ be the center of $D$ (which is a number field by
Proposition \ref{1.2}), and $\mathcal{O}_Z$ be its integer ring. Since we have a homomorphism $F\rightarrow D$, and $F$ is totally real, from
Proposition \ref{1.2}, $Z$ is totally real and either $Z=F=D$ or $D$ is a quaternion division algebra over $Z$ and $[F:Z]=2$.

For any prime $\mathfrak{q}$ of $F$, we fix an isomorphism
$T_{\mathfrak{q}}(A)\cong (\mathcal{O}_{F,\mathfrak{q}})^2$, and
denote by $r_{\mathfrak{q}}: \Gal(\bar{\mathbb{Q}}/k)\rightarrow
GL_2(\mathcal{O}_{F,\mathfrak{q}})$ as the induced Galois
representation on $T_{\mathfrak{q}}(A)$. Let $\mathfrak{q}'$ be the
prime of $Z$ lying under $\mathfrak{q}$, and $Z_\mathfrak{q}$ be the
completion of $Z$ with respect to $\mathfrak{q}'$. Hence
$Z_\mathfrak{q}$ can be identified with the closure of $Z$ in
$F_\mathfrak{q}$. By Tate's conjecture over a number field, which is
proved by Faltings (see \cite{Faltings}), inside the algebra
$\mathrm{End}^0_{Z_\mathfrak{q}}(T_\mathfrak{q}(A))=\mathrm{End}_{\mathcal{O}_{Z,\mathfrak{q}}}(T_\mathfrak{q}(A))\otimes_{\mathcal{O}_{Z,\mathfrak{q}}}Z_\mathfrak{q}$,
the algebra
$C_\mathfrak{q}=Z_\mathfrak{q}[r_\mathfrak{q}(\Gal(\bar{\mathbb{Q}}/k))]$
generated over $Z_\mathfrak{q}$ by the image of $r_\mathfrak{q}$ is
the commutatant
 of $D_\mathfrak{q}=\mathrm{End}^0(A_{/k})\otimes_Z Z_\mathfrak{q}$. Since $D_\mathfrak{q}$ is
 either isomorphic to $Z_\mathfrak{q}$ or a central quaternion algebra over $Z_\mathfrak{q}$,
 $C_\mathfrak{q}$ is also a central simple algebra over $Z_\mathfrak{q}$ and in the Brauer group $Br(Z_\mathfrak{q})$, the class of $D_\mathfrak{q}$ is the inverse
  of the class of $C_\mathfrak{q}$. The order of the class of  $C_\mathfrak{q}$ in $Br(Z_\mathfrak{q})$ is at most $2$, and hence $C_\mathfrak{q}$ is either
isomorphic to  a quaternion division algebra over $Z_\mathfrak{q}$ or  isomorphic to $M_2(Z_\mathfrak{q})$.

We consider the case $\mathfrak{q}=\mathfrak{l}$. Since $A_{/k}$ has goode reduction at $\mathfrak{L}$, we can embed
$M_{\mathfrak{L},\mathfrak{l}}=M_{\mathfrak{L}}\otimes_{F}F_\mathfrak{l}\cong F_\mathfrak{l}\oplus F_\mathfrak{l}$ into $C_{\mathfrak{l}}$.
Hence $C_{\mathfrak{l}}$
is isomorphic to $M_2(F_\mathfrak{l})=M_2(Z_\mathfrak{l})$. Under this assumption,we can apply an argument in \cite{Ri76} Chapter $4$ to prove that the image $\mathrm{Im}(r)$ contains
an open subgroup of $SL_2(\mathbb{Z}_l)\subseteq C_{\mathfrak{l}}^\times$.

Choose a quadratic ramified extension $K/\mathbb{Q}_l$. Since
$F_\mathfrak{l}/\mathbb{Q}_l$ is unramified, $K$ and
$F_\mathfrak{l}$ are linearly disjoint over $\mathbb{Q}_l$. Let $L$
be the compositum field of $K$ and $F_\mathfrak{l}$. Define the
torus $T_{/\mathcal{O}_{F,\mathfrak{l}}}$ of
$GL_{2/\mathcal{O}_{F,\mathfrak{l}}}$ as the norm $1$ subgroup of
$Res_{\mathcal{O}_L/\mathcal{O}_{F,\mathfrak{l}}}(\mathbb{G}_m)$;
i.e.
$$T(\mathcal{O}_{F,\mathfrak{l}})=\{x\in \mathcal{O}_L^\times| Norm_{L/F_\mathfrak{l}}(x)=1\}.$$
Hence $T_{/\mathcal{O}_{F,\mathfrak{l}}}$ is a maximal anisotropic torus of $GL_{2/\mathcal{O}_{F,\mathfrak{l}}}$.
Since
$$T(\mathcal{O}_{F,\mathfrak{l}})\cap SL_2(\mathbb{Z}_l)=\{x\in \mathcal{O}_K^\times| Norm_{K/\mathbb{Q}_l}(x)=1\},$$
$T(\mathcal{O}_{F,\mathfrak{l}})\cap SL_2(\mathbb{Z}_l)$ is a maximal anisotropic torus of $GL_{2/\mathbb{Z}_l}$.

Choose $\alpha\in T(\mathcal{O}_{F,\mathfrak{l}})\cap
\mathrm{Im}(r)\cap SL_2(\mathbb{Z}_l)$, such that $\alpha$ has two
different eigenvalues in $\bar{\mathbb{Q}}_l$. Then
$T(\mathcal{O}_{F,\mathfrak{l}})$ is the centralizer $T_\alpha$ of
$\alpha$ in $GL_2(\mathcal{O}_{F,\mathfrak{l}})$. Since the
isomorphism classes of maximal torus in
$GL_{2/\mathcal{O}_{F,\mathfrak{l}}}$ is finite, the isomorphism
class of the centralizer of $\alpha$ is determined by $\alpha$ mod
$p^j$, for some integer $j$ large enough. In other word, if
$\beta\in SL_2(\mathbb{Z}_l)$, such that $\alpha\equiv\beta$ mod $p^j$,
then the centralizer $T_\beta$ of $\beta$ is isomorphic to
$T_\alpha=T$. By Chebotarev density, we can find a prime
$\mathfrak{Q}$ of $k$ over a rational prime $q\neq p,l$, such that
$A_{/k}$ has good reduction at  $\mathfrak{Q}$ and
$r(Frob_\mathfrak{Q})\equiv \alpha$ mod $p^j$. Hence the commutator
$T_{r(Frob_\mathfrak{Q})}$ of $r(Frob_\mathfrak{Q})$ is isomrphic to
$T$. Let $M_\mathfrak{Q}$ be the field generated over $F$ by the
eigenvalues of $r(Frob_\mathfrak{Q})$. By the above construction,
$\mathfrak{l}$ does not split in $M_\mathfrak{Q}$, and hence
$M_\mathfrak{Q}$ is not isomorphic to $M_\mathfrak{L}$. Further by
\cite{H12} Proposition $7.1$, we can assume that $\Sigma_q^{ord}$
contains the prime induce from $i_q\circ \sigma$.

Now define a finite set of primes $\Xi=\{p,q,l\}$. Hence the abelian
variety $A_{/k}$ can be extended to an abelian scheme
$A_{/\widetilde{\mathcal{W}}_k}$. From Proposition \ref{2.6},
replacing $A_{/k}$ by an isogeny if necessary, we can assume that
the abelian scheme $A_{/\widetilde{\mathcal{W}}_k}$ admits an
$\mathcal{O}_F$-action $\iota: \mathcal{O}_F\rightarrow
\mathrm{End}(A_{/\widetilde{\mathcal{W}}_k})$ and a
$\mathfrak{c}$-polarization $\phi$ for some fractional ideal
$\mathfrak{c}$ of $F$.
 Then by choosing a integral level structure
$\alpha^{\Xi}$ of $A$, we get a quadruple
$(A_{/\widetilde{\mathcal{W}}_k},\iota,\phi,\alpha^{\Xi})$, which
represents a point in the Shimura variety $x\in
Sh^{(\Xi)}(\widetilde{\mathcal{W}}_k)$.

Now assume that the Tate module $T_{\mathfrak{p}}(A)$ is decomposable as an $I_\mathfrak{P}$-module. Then the exact sequence of Barsotti-Tate  $\mathcal{O}_{\mathfrak{p}}$-modules over $\widetilde{W}_p$:
$$0\rightarrow\mu_{p^\infty}\otimes_{\mathbb{Z}_p}\mathcal{O}_{\mathfrak{p}} \rightarrow A[\mathfrak{p}^\infty]\rightarrow F_{\mathfrak{p}}/\mathcal{O}_{\mathfrak{p}}\rightarrow 0$$
splits. Then by Theorem \ref{Th2}, we must have isomorphisms of $F$-algebras: $M_{\mathfrak{Q}}\cong\mathrm{End}_F^0(A_{\mathfrak{Q}/\bar{\mathbb{F}}_p})$
and $M_{\mathfrak{L}}\cong\mathrm{End}_F^0(A_{\mathfrak{L}/\bar{\mathbb{F}}_p})$. But this contradicts with our construction $M_{\mathfrak{Q}}\ncong M_{\mathfrak{L}}$.
Hence $T_{\mathfrak{p}}(A)$ must be indecomposable as an $I_\mathfrak{P}$-module.
\end{proof}
\subsection{Application to Hilbert modular Galois represenations}
As the first application of Theorem \ref{Th3}, we study the Galois
representation attached to certain Hilbert modular forms. First we
recall the notions of Hilbert modular forms and Hecke operators.

Let $\mathrm{I}=\mathrm{Hom}_\mathbb{Q}(F,\bar{\mathbb{Q}})$, and
let $\mathbb{Z}[\mathrm{I}]$ be the set of formal
$\mathbb{Z}$-linear combinations of elements in $\mathrm{I}$. Then
$\mathbb{Z}[\mathrm{I}]$ can be identified with the character group
$X(T)$ of the torus $T$. Take $k=(k_\sigma)_{\sigma\in \mathrm{I}}$
such that $k_\sigma\geq 2$ for all $\sigma\in \mathrm{I}$ and all
the $k_\sigma$'s have the same parity. Set  $t=(1,\ldots,1)\in
\mathbb{Z}[\mathrm{I}]$ and $n=k-2t$. Choose
$v=(v_\sigma)_{\sigma\in \mathrm{I}}$ such that $v_\sigma\geq 0$,
for all $\sigma$, $v_\sigma=0$ for at least one $\sigma$, and there
exists $\mu\in \mathbb{Z}$ such that $n+2v=\mu t\in
\mathbb{Z}[\mathrm{I}]$. Then define $w=v+k-t$.

Recall that in Section $3$ we define the algebraic group
$G=Res_{\mathcal{O}_F/\mathbb{Z}}(GL_2)$ and
$T=Res_{\mathcal{O}_F/\mathbb{Z}}(\mathbb{G}_m)$. Denote by
$\nu:G\rightarrow T$ the reduced norm morphism. Fix an open subgroup
$U$ of $G(\widehat{\mathbb{Z}})=GL_2(\widehat{\mathcal{O}}_F)$ where
$\widehat{\mathcal{O}}_F=\mathcal{O}_F\otimes_\mathbb{Z}\widehat{\mathbb{Z}}=\Pi_{\mathfrak{p}}\mathcal{O}_{F,\mathfrak{p}}$.
In the last product, $\mathfrak{p}$ ranges over all the prime ideals
of $\mathcal{O}_F$ and $\mathcal{O}_{F,\mathfrak{p}}$ is the
completion of $\mathcal{O}_F$ at $\mathfrak{p}$. Let
$F_\mathbb{A}=F\otimes_\mathbb{Z}\mathbb{A}$ be the adele ring of
$F$. We can decompose the group $G(F_\mathbb{A})$ as the product
$G_\infty\times G_f$, where $G_\infty$ (resp. $G_f$) is the infinite
(resp. finite) part of $G(F_\mathbb{A})$, and for each $u\in
G(F_\mathbb{A})$, we have the corresponding decomposition
$u=u_\infty u_f$.

Let $\mathfrak{h}$ be the complex upper half plane and
$i=\sqrt{-1}\in \mathfrak{h}$. Let $\mathfrak{h}^\mathrm{I}$ be the
product of $d$ copies of $\mathfrak{h}$ indexed by elements in
$\mathrm{I}$ and $z_0=(i,\ldots,i)\in \mathfrak{h}^\mathrm{I}$.
Define a function $j:G_\infty\times
\mathfrak{h}^\mathrm{I}\rightarrow\mathbb{C}^\mathrm{I}$ by the
formula:
$$\left(\mat{a_\tau}{b_\tau}{c_\tau}{d_\tau},z_\tau\right)_{\tau\in \mathrm{I}}\mapsto (c_\tau z_\tau+d_\tau)_{\tau\in \mathrm{I}}.$$
\begin{definition}
Define the space of Hilbert modular cusp forms
$S_{k,w}(U;\mathbb{C})$ as the set of functions
$f:G(F_\mathbb{A})\rightarrow \mathbb{C}$ satisfying the following
conditions:
\begin{enumerate}
\item $f|_{k,w}u=f$, for all $u\in UC_{\infty+}$ where $C_{\infty+}=(\mathbb{R}^\times\cdot SO_2(\mathbb{R}))^\mathrm{I}\subseteq
G_\infty$, and
$$f|_{k,w}u(x)=j(u_\infty,z_0)^{-k}v(u_\infty)^w f(xu^{-1});$$
\item $f(ax)=f(x)$ for all $a\in G(\mathbb{Q})=GL_2(F)$;
\item For any $x\in G_f$, the function
$f_x:\mathfrak{h}^\mathrm{I}\rightarrow\mathbb{C}$ defined by
$u_\infty(z_0)\mapsto j(u_\infty,z_0)^kv(u_\infty)^{-w}f(xu_\infty)$
for $u_\infty\in G_\infty$ is holomorphic;
\item $\int_{F_\mathbb{A}/F}f\left(\mat{1}{a}{0}{1}x\right)da=0$ for all $x\in
G(F_\mathbb{A})$ and additive Haar measure $da$ on $F_\mathbb{A}/F$.
\end{enumerate}
When $F=\mathbb{Q}$, we also add the following condition: the
function $|\mathrm{Im}(z)^{k/2}f_x(z)|$ is uniformly bounded on
$\mathfrak{h}$ for all $x\in G_f=GL_2(\mathbb{A}_f)$.
\end{definition}

Fix an integral ideal $\mathfrak{m}$ of $F$, we define three open
subgroups of $GL_2(\widehat{\mathcal{O}}_F)$:
$$U_0(\mathfrak{m})=\left\{\mat{a}{b}{c}{d}\in GL_2(\widehat{\mathcal{O}}_F)|c\in\mathfrak{m}\widehat{\mathcal{O}}_F\right\},$$
$$U_1(\mathfrak{m})=\left\{\mat{a}{b}{c}{d}\in GL_2(\widehat{\mathcal{O}}_F)|c\in\mathfrak{m}\widehat{\mathcal{O}}_F,
a\equiv 1 ~mod~ \mathfrak{m}\widehat{\mathcal{O}}_F\right\},$$
$$U(\mathfrak{m})=\left\{\mat{a}{b}{c}{d}\in GL_2(\widehat{\mathcal{O}}_F)|c\in\mathfrak{m}\widehat{\mathcal{O}}_F,
a\equiv d\equiv 1 ~mod~
\mathfrak{m}\widehat{\mathcal{O}}_F\right\},$$ and set
$S_{k,w}(\mathfrak{m},\mathbb{C})=S_{k,w}(U_1(\mathfrak{m}),\mathbb{C})$.

Let $U, U'$ be two open compact subgroups of $G_f$ and fix $x\in
G_f$. Define a Hecke operator
$$[UxU']: S_{k,w}(U;\mathbb{C})\rightarrow S_{k,w}(U';\mathbb{C}), f\mapsto \sum_i f|_{k,w}x_i,$$
where $\{x_i\}$ is a set of representatives of the left cosets
$U\backslash UxU'$; i.e., we have  $UxU'=\coprod U x_i$ and when we
consider the action $f|_{k,w}x_i$, we regard $x_i\in G_f$ as an
element in $G(F_\mathbb{A})$ such that its infinite part consists of
$d$ copies of identity matrices. For all prime ideal $\mathfrak{q}$
of $F$, fix a uniformizer $\pi_\mathfrak{q}$ of $F_\mathfrak{q}$,
and define the Hecke operator
$$T(\mathfrak{q})=\left[U\mat{1}{0}{0}{\beta_\mathfrak{q}}U\right]:S_{k,w}(U;\mathbb{C})\rightarrow
S_{k,w}(U;\mathbb{C}),$$ where $\beta_\mathfrak{q}\in
F_{\mathbb{A}_f}^\times$ is the finite idele whose
$\mathfrak{q}$-component is $\pi_\mathfrak{q}$ and all the other
components are $1$. For each fractional ideal $\mathfrak{n}$ of $F$,
set
$\alpha=\Pi_\mathfrak{q}\pi_\mathfrak{q}^{v_\mathfrak{q}(\mathfrak{n})}\in
F_{\mathbb{A}_f}^\times$, and define the Hecke operator
$$\langle\mathfrak{n}\rangle=\left[U\mat{\alpha}{0}{0}{\alpha}U\right]:S_{k,w}(U;\mathbb{C})\rightarrow
S_{k,w}(U;\mathbb{C}).$$

Let $f\in S_{k,w}(\mathfrak{m},\mathbb{C})$ be a normalized Hilbert
modular eigenform in the sense that  for any prime ideal
$\mathfrak{q}$ of $F$,  there exists $c(\mathfrak{q},f)\in
\bar{\mathbb{Q}}$ and $d(\mathfrak{q},f)\in \bar{\mathbb{Q}}$ such
that $T(\mathfrak{q})(f)=c(\mathfrak{q},f)\cdot f$ and
$\langle\mathfrak{q}\rangle(f)=d(\mathfrak{q},f)\cdot f$. Let $K_f$
be the field generated over $\mathbb{Q}$ by all the
$c(\mathfrak{a},f)$'s and $d(\mathfrak{a},f)$'s. Shimura proved that
$K_f$ is a number field which is either totally real or CM. Denote
by $\mathcal{O}_f$ the integer ring of $K_f$.

For such an $f$, let $\pi_f=\otimes \pi_v$ be the automorphic
representation of $GL_2(F_{\mathbb{A}_f})$ on the linear span of all
the right translations of $f$ by elements of
$GL_2(F_{\mathbb{A}_f})$, here $F_{\mathbb{A}_f}$ is the finite
adele of $F$,and $\pi_v$ is a representation of $GL_2(F_v)$ for each
finite place $v$ of $F$. We assume that one of the following two
statements holds:
\begin{enumerate}
\item $[F:\mathbb{Q}]$ is odd;
\item there exists some finite place $v$ of $F$ such that $\pi_v$ is square integrable.
\end{enumerate}
For such an eigenform $f$, the following result is known (see
\cite{H06} Theorem $2.43$ for details and historical remarks). For
each prime $\lambda$ of $\mathcal{O}_f$ over a rational prime $p$,
there is a continuous representation $\rho_{f,\lambda}:
\mathrm{Gal}(\bar{\mathbb{Q}}/F)\rightarrow
GL_2(\mathcal{O}_{f,\lambda})$, which is unramified outside primes
dividing $\mathfrak{m}p$ such that for any primes $\mathfrak{q}\nmid
\mathfrak{m}p$, we have:
$$\textrm{trace} (\rho_{f,\lambda}(Frob_\mathfrak{q}))=c(\mathfrak{q},f), \text{~and~}
\det(\rho_{f,\lambda}(Frob_\mathfrak{q}))=d(\mathfrak{q},f)N\mathfrak{q}.$$
Here $\mathcal{O}_{f,\lambda}$ is the completion of $\mathcal{O}_f$
at $\lambda$, $Frob_\mathfrak{q}$ is the Frobenius of
$\mathrm{Gal}(\bar{\mathbb{Q}}/F)$ at $\mathfrak{q}$,and for any
ideal $\mathfrak{b}$ of $\mathcal{O}_F$, $N\mathfrak{b}$ is the
cardinality number of the ring $\mathcal{O}_F/\mathfrak{b}$.

Fix a prime $\mathfrak{p}$ of $\mathcal{O}_F$ over a rational prime $p$, let $D_\mathfrak{p}$(resp. $I_\mathfrak{p}$) be the
decomposition group (resp. inertia group) of $\mathrm{Gal}(\bar{\mathbb{Q}}/F)$ at $\mathfrak{p}$.
Let $\lambda$ be a prime  of $\mathcal{O}_f$ over  $p$.
From \cite{Wi88} Lemma $2.1.5$, if $c(\mathfrak{p},f)$ is a unit mod $\lambda$,
then the restriction of $\rho_{f,\lambda}$ to $D_\mathfrak{p}$ is upper triangular, i.e. there exist two characters $\epsilon_1,\epsilon_2$ of $D_\mathfrak{p}$,
such that
$$\rho_{f,\lambda}|_{D_\mathfrak{p}}\sim \mat{\epsilon_1}{\ast}{0}{\epsilon_2}.$$

We would like to prove the following:

\begin{theorem}\label{Th4}
Under the above notations, suppose that $k=2t$ and $f$ is nearly
$\mathfrak{p}$-ordinary in the sense that $c(\mathfrak{p},f)$ is a
unit mod $\lambda$. If $f$ does not have complex multiplication,
then the representation $\rho_{f,\lambda}|_{I_\mathfrak{p}}$ is
indecomposable.
\end{theorem}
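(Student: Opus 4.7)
The plan is to realize $\rho_{f,\lambda}$ geometrically as coming from the $p$-adic Tate module of an abelian variety $A_{f/F}$ and then apply Theorem \ref{Th3}. Under the hypothesis on the parity of $[F:\mathbb{Q}]$ (or the square-integrability of some local component $\pi_v$), the Jacquet-Langlands correspondence transfers $f$ to a newform $g$ on a suitable quaternion algebra $B$ over $F$. Applying the construction of \cite{H81} Theorem $4.4$ to $g$ produces an abelian variety $A_{f/F}$ together with an embedding $K_f' \hookrightarrow \mathrm{End}^0(A_{f/F})$, where $K_f'/K_f$ is a finite extension with $[K_f':\mathbb{Q}]=\dim A_f$; the $\lambda'$-adic Tate module of $A_f$ (for any prime $\lambda'$ of $K_f'$ above $\lambda$) realizes $\rho_{f,\lambda}$ after extension of coefficients.

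Because $A_{f/F}$ is non-CM and defined over the totally real field $F$, Proposition \ref{1.4} supplies a totally real field $F'$ of the same degree as $K_f'$ which embeds into $\mathrm{End}^0(A_{f/F})$ as an $F$-subalgebra. After the Serre-type modification of Proposition \ref{2.6}, I may regard $A_f$ as an abelian variety with real multiplication by $\mathcal{O}_{F'}$, equipped with a $\mathfrak{c}$-polarization for some fractional ideal $\mathfrak{c}$ of $F'$, to which the framework of Sections $2$--$4$ applies. The decomposition $F'\otimes\mathbb{Q}_p = \prod_{\mathfrak{q}\mid p} F'_\mathfrak{q}$ then yields two-dimensional $F'_\mathfrak{q}$-representations $V_\mathfrak{q} = T_\mathfrak{q}(A_f)\otimes\mathbb{Q}_p$ of $\mathrm{Gal}(\bar{\mathbb{Q}}/F)$.

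The next step is to verify condition (NLL) at the given prime $\mathfrak{p}$ of $F$. Good reduction at $\mathfrak{p}$ follows from the weight-$2t$ assumption combined with the near-$\mathfrak{p}$-ordinariness, which forces the local component $\pi_{f,\mathfrak{p}}$ to be an unramified principal series. The existence of non-trivial $\mathfrak{q}$-torsion $\bar{\mathbb{F}}_p$-points in the reduction of $A_f$ at $\mathfrak{p}$, for an appropriate prime $\mathfrak{q}$ of $F'$ over $p$, translates via the Eichler-Shimura congruence relation into the unit-ness of $c(\mathfrak{p},f)$ mod $\lambda$, after matching $\lambda$ with $\mathfrak{q}$ using the identification of $F'$ with $K_f'$ provided by Proposition \ref{1.4}; hence $\mathfrak{q}\in\Sigma_p^{ord}$. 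Theorem \ref{Th3}, applied with $k=F$ and $\mathfrak{P}=\mathfrak{p}$, then yields the indecomposability of $T_\mathfrak{q}(A_f)$ as an $I_\mathfrak{p}$-module.

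It remains to transfer this indecomposability to $\rho_{f,\lambda}|_{I_\mathfrak{p}}$. Both $V_\mathfrak{q}$ and $\rho_{f,\lambda}$ appear as two-dimensional constituents of $T_p(A_f)\otimes\bar{\mathbb{Q}}_p$, cut out by different maximal commutative subfields ($F'$ and $K_f'\supseteq K_f$, respectively) of the simple algebra $\mathrm{End}^0(A_f)$. Over a sufficiently large $p$-adic coefficient field containing both $F'_\mathfrak{q}$ and $K_{f,\lambda}$, the two representations become isomorphic up to multiplicity by virtue of the isotypicity of $A_f$ (Proposition \ref{1.1}); since indecomposability of a two-dimensional representation is equivalent to the non-splitting of an extension class and is therefore preserved under faithfully flat extension and descent of scalars, indecomposability of $V_\mathfrak{q}$ over $I_\mathfrak{p}$ implies the same for $\rho_{f,\lambda}|_{I_\mathfrak{p}}$. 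The main obstacle is precisely this final matching: carefully tracking the correspondence between the Hecke-theoretic realization $\rho_{f,\lambda}$ (with possibly CM coefficients) and the geometric $V_\mathfrak{q}$ (with totally real coefficients), as foreshadowed in the introduction.
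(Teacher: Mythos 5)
Your overall strategy matches the paper's (realize $\rho_{f,\lambda}$ in the $p$-adic Tate module of the Shimura abelian variety $A_f$, use Proposition~\ref{1.4} to trade the possibly--CM coefficient field for a totally real one, and invoke Theorem~\ref{Th3}), but the proposal has a genuine gap at the reduction step. You claim that ``good reduction at $\mathfrak{p}$ follows from the weight-$2t$ assumption combined with the near-$\mathfrak{p}$-ordinariness, which forces the local component $\pi_{f,\mathfrak{p}}$ to be an unramified principal series.'' This is false: near-ordinariness only says $c(\mathfrak{p},f)$ is a $\lambda$-adic unit, which is compatible with $\pi_\mathfrak{p}$ being a special (Steinberg-type) representation or a ramified principal series $\pi(\alpha_\mathfrak{p},\beta_\mathfrak{p})$ with exactly one of the two quasi-characters unramified. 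In the special case $A_f$ has purely multiplicative reduction at $\mathfrak{p}$ and Theorem~\ref{Th3} does not apply at all --- the paper treats this case separately by Mumford's $p$-adic uniformization, which gives indecomposability directly. In the ramified principal series case one must first twist by a finite idele class character $\chi$ (so that $\chi\otimes\pi$ corresponds to a $\mathfrak{p}$-ordinary newform) and then pass to the finite extension $F_\psi$ of $F$ cutting out the nebentypus, over which both $\alpha_\mathfrak{p}$ and $\beta_\mathfrak{p}$ become unramified and the N\'eron--Ogg--Shafarevich criterion gives good reduction. Without this twisting and base change step, the hypothesis (NLL) for Theorem~\ref{Th3} cannot be verified, so the application of Theorem~\ref{Th3} in your third paragraph is unjustified.

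A secondary issue is the phrase ``using the identification of $F'$ with $K_f'$ provided by Proposition~\ref{1.4}.'' Proposition~\ref{1.4} produces a totally real field $F'$ of the correct degree inside $\mathrm{End}^0(A_{f/\bar{\mathbb{Q}}})$, but it does \emph{not} identify $F'$ with $K_f'$; indeed when $K_f$ is CM the two fields are necessarily non-isomorphic, and this mismatch is precisely what the paper's careful case analysis ($[F_B:Z]=1$ versus $[F_B:Z]=2$, together with passing to $B^{2e}$ and a well-chosen isogeny) is designed to resolve. Your final paragraph correctly senses that matching the Hecke-theoretic representation to the geometric one is ``the main obstacle,'' and the heuristic about faithfully flat descent of indecomposability for a rank-two extension class is sound in spirit, but as written it sidesteps rather than carries out the comparison, and in particular does not address the need to descend the endomorphisms and the isogeny to a concrete finite extension $M/F_\psi$ so that the Tate-module isomorphism is $\mathrm{Gal}(\bar{\mathbb{Q}}/M)$-equivariant, from which indecomposability over $I_\mathfrak{p}\cap\mathrm{Gal}(\bar{\mathbb{Q}}/M)$ passes to $I_\mathfrak{p}$ by the finite-index argument in characteristic zero.
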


\begin{proof}
As the Hecke operator $T(\mathfrak{p})$ acts nontrivially on $f$,
from \cite{H89} Corollary $2.2$, the local representation
$\pi_\mathfrak{p}$ of $GL_2(F_\mathfrak{p})$ is either a principal
representation $\pi(\xi_\mathfrak{p},\eta_\mathfrak{p})$ or a
special representation $\sigma(\xi_\mathfrak{p},\eta_\mathfrak{p})$.
From the argument in \cite{H89} Section $2$, we can find a finite
character $\chi:
F_\mathbb{A}^\times/F^\times\rightarrow\bar{\mathbb{Q}}^\times$
($F_\mathbb{A}$ is the adele ring of $F$) such that the
$\mathfrak{p}$-component of $\chi$ satisfies
$\chi_\mathfrak{p}=\xi_\mathfrak{p}$ on
$\mathcal{O}_{F,\mathfrak{p}}^\times$ and unramified at every
infinite place of $F$. Then the argument in \cite{H89} Section $2$
implies that the automorphic representation $\chi\otimes\pi$
corresponds to a primitive $\mathfrak{p}$-ordinary newform $f_0$. If
we regard  the representations $\rho_{f,\lambda}$ and
$\rho_{f_0,\lambda}$ as representations in
$GL_2(\bar{\mathbb{Q}}_p)$, then they are related by the formula
$\rho_{f,\lambda}\otimes\chi^{-1}=\rho_{f_0,\lambda}$. It is enough
to prove the statement for the newform $f_0$ and henceforth we
assume that the Hilbert modular form $f$ is a primitive
$\mathfrak{p}$-ordinary newform with character $\psi$ for some idele
class character $\psi$ of $F$ with finite order.

From \cite{H81} Theorem $4.4$ or \cite{Wi86} Theorem $2.1$, there
exists an abelian variety $A_f$ defined over $F$ , a finite
extension $L/K_f$ whose degree equals to the dimension of $A_f$ and
an embedding $\theta: L\rightarrow \mathrm{End}(A_{f/F})$ such that
the $\lambda$-adic representation associated to the Tate module of
$A_f$ is isomorphic to $\rho_{f,\lambda}$. Moreover the number field
$L$ is either totally real or CM. To be more precise, there exists
an integer $e$ such that $dim(A_{f/F})=e[K_f:\mathbb{Q}]$. When
$[F:\mathbb{Q}]$ is odd, $e=1$ and there is nothing to explain in
this situation. When $[F:\mathbb{Q}]$ is even, $e$ can be bigger
than $1$, and a priori the $p$-adic Tate module of $A_{f/F}$ gives
us a representation of $\Gal(\bar{\mathbb{Q}}/F)$ in
$GL_2(L_\lambda)$,where $L_\lambda$ is a finite extension of
$K_{f,\lambda}$. Since this representation is odd, by choosing
suitable eigenvectors of a complex conjugation $c\in
\Gal(\bar{\mathbb{Q}}/F)$ as basis for $T_p(A_f)$, we can realize
this representation in $GL_2(K_{f,\lambda})$. (See \cite{Wi88}
Section $2.1$ for details.)

As $c(\mathfrak{p},\lambda)$ is a unit mod $\lambda$, the abelian
variety $A_f$ has potentially semistable reduction at $\mathfrak{p}$
by the lemma in \cite{Wi86} Section $2$. More precisely, if we
denote by $F_\psi$ the number field corresponding to the character
$\psi$ by class field theory,
 then $A_f$ has semistable reduction  over $F_\psi$.
In fact,  choose a prime $\lambda'$ of $\mathcal{O}_f$ over a
rational prime $l\neq p$ and consider the $\lambda'$-adic
representation $\rho_{f,\lambda'}$. When $\mathfrak{p}$ does not
divide the level $\mathfrak{m}$, the abelian variety $A_f$ has good
reduction at $\mathfrak{p}$ because the representation
$\rho_{f,\lambda'}$ is unramified at $\mathfrak{p}$. If
$\mathfrak{p}$ divides $\mathfrak{m}$, one can consider the  complex
representation $\sigma_\mathfrak{p}$ of the local Weil-Deligne group
$W'_{F_\mathfrak{p}}$ of $F$ at $\mathfrak{p}$ associated to
$\rho_{f,\lambda'}$ (see \cite{T}). Then by a result of Carayol
\cite{Ca}, we have an isomorphism $\pi(\sigma_\mathfrak{p})\cong
\pi_\mathfrak{p}$, where $\pi(\sigma_\mathfrak{p})$ is the
representation of $GL_2(F_\mathfrak{p})$ associated to
$\sigma_\mathfrak{p}$ under the local Langlands correspondence. In
particular, the Euler factor $L(\pi_\mathfrak{p},s)$ of the
$L$-series at $\mathfrak{p}$ is given by
$(1-c(\mathfrak{p},f)N\mathfrak{p}^{-s})^{-1}$. As
$c(\mathfrak{p},f)\neq 0$ by assumption, $L(\pi_\mathfrak{p},s)$ is
nontrivial. Hence $\pi_\mathfrak{p}$ is either a special
representation $\sigma(\alpha_\mathfrak{p},\beta_\mathfrak{p})$ or a
principal series representation
$\pi(\alpha_\mathfrak{p},\beta_\mathfrak{p})$, where
$\alpha_\mathfrak{p},\beta_\mathfrak{p}$ are two quasi-characters of
$F_\mathfrak{p}^\times$. In the first case, from \cite{Wi86} Theorem
$2.2$,
 the reduction of $A_f$ at $\mathfrak{p}$ is purely multiplicative. From the uniformization result in \cite{Mum72}, $\rho_{f,\lambda}|_{I_\mathfrak{p}\cap \Gal(\bar{\mathbb{Q}}/F_\psi)}$
is indecomposable. As $I_\mathfrak{p}\cap
\Gal(\bar{\mathbb{Q}}/F_\psi)$ is a subgroup of $I_\mathfrak{p}$
with finite index, and $char(K_f)=0$, the representation
$\rho_{f,\lambda}|_{I_\mathfrak{p}}$ is also indecomposable. In the
second case, as the Euler factor $L(\pi_\mathfrak{p},s)\neq 1$, one
of the quasi-characters $\alpha_\mathfrak{p},\beta_\mathfrak{p}$ is
unramified. By comparing the determinant of the two representations
$\pi_\mathfrak{p}$ and $\sigma_\mathfrak{p}$, we see that the
product
$\psi_\mathfrak{p}^{-1}\alpha_\mathfrak{p}\beta_\mathfrak{p}$ is
unramified, where $\psi_\mathfrak{p}$ is the
$\mathfrak{p}$-component of the idele class character $\psi$. Hence
over $F_\psi$, both quasi-characters $\alpha_\mathfrak{p}$ and
$\beta_\mathfrak{p}$ are unramified. Then from the criterion of
N\'{e}ron-Ogg-Shafarevich, the abelian variety $A_f$ has good
reduction over $F_\psi$ at $\mathfrak{p}$.

From now on we assume that $A_f$ has good reduction over $F_\psi$. From Proposition \ref{1.2}, we see that $A_{f/\bar{\mathbb{Q}}}$ is isotypic; i.e. there
exists a simple abelian variety $B_{/\bar{\mathbb{Q}}}$ such that there exists an isogeny $\varphi: A_f\rightarrow B^e$ for some integer $e\geq 1$. This
isogeny induces an isomorphism of simple algebras $i: \mathrm{End}^0(A_{f/\bar{\mathbb{Q}}})\rightarrow \mathrm{End}^0((B_{/\bar{\mathbb{Q}}})^e)$. Hence we
have an embedding $\theta_B=i\circ \theta: L\rightarrow \mathrm{End}^0((B_{/\bar{\mathbb{Q}}})^e)$.

From
Proposition \ref{1.4},  we can find a totally real field $F_B$ and a homomorphism $\iota_B: F_B\rightarrow \mathrm{End}^0(B_{/\bar{\mathbb{Q}}})$, such
that $[F_B:\mathbb{Q}]=dim(B_{/\bar{\mathbb{Q}}})$. Let $Z$ be the center of the division algebra $\mathrm{End}^0(B_{/\bar{\mathbb{Q}}})$. From the proof
of Proposition \ref{1.4}, if we identify $F_B$ as a subalgebra of $\mathrm{End}^0(B_{/\bar{\mathbb{Q}}})$ by $\iota_B$, then $Z\subseteq F_B$ and
$[F_B: Z]\leq 2$.

If $[F_B: Z]=1$, we have $F_B=Z$ and hence  $F_B\subseteq \theta_B(L)$. Since both $A_f$ and $B$ are projective varieties, we can find a finite extension
$M$ of $F_\psi$ such that
\begin{enumerate}
\item the abelian variety $B$ is defined over $M$;
\item we have the equalities of endomorphism algebras: $\mathrm{End}(A_{f/\bar{\mathbb{Q}}})=\mathrm{End}(A_{f/M})$ and $\mathrm{End}(B_{/\bar{\mathbb{Q}}})
=\mathrm{End}(B_{/M})$.
\item the isogeny $\varphi$ is defined over $M$.
\end{enumerate}
Under the above notations, the isogeny $\varphi$ gives an
isomorphism of $p$-adic Tate modules $T_p(B)\otimes_{F_B}L\cong
T_p(A)$, which is equivariant under the action of the Galois group
$\Gal(\bar{\mathbb{Q}}/M)$.

If $[F_B: Z]=2$, $F_B$ may not contained in the image $\theta_B(L)$. In this case, we can find a quadratic extension $K/L$ such that $F_B$ can be embedded
 into $K$. As the homomorphism $\theta: L\rightarrow \mathrm{End}^0(A_{f/\bar{\mathbb{Q}}})$ identifies $L$ with a maximal commutative subfield of the
 simple algebra $\mathrm{End}^0(A_{f/\bar{\mathbb{Q}}})$, we can extend this homomorphism to a homomorphism $\theta': K\rightarrow \mathrm{End}^0(A^2_{f/\bar{\mathbb{Q}}})$,
 which identifies $K$ with a maximal commutative subfield of $\mathrm{End}^0(A^2_{f/\bar{\mathbb{Q}}})$. Similarly we can extend the homomorphism $\theta_B$
 to a homomorphism $\theta'_B:K\rightarrow \mathrm{End}^0(B^{2e}_{/\bar{\mathbb{Q}}})$. Since $A^2_{f/\bar{\mathbb{Q}}}$ is isogeneous to $B^{2e}_{/\bar{\mathbb{Q}}}$,
 the simple algebras $\mathrm{End}^0(A^2_{f/\bar{\mathbb{Q}}})$ and $\mathrm{End}^0(B^{2e}_{/\bar{\mathbb{Q}}})$ are isomorphic. Since all automorphisms
 of a simple algebra are inner, by choosing a suitable isogeny from $A^2_{f/\bar{\mathbb{Q}}}$ to $B^{2e}_{/\bar{\mathbb{Q}}}$, we have an isomorphism
 $i':\mathrm{End}^0(A^2_{f/\bar{\mathbb{Q}}})\cong \mathrm{End}^0(B^{2e}_{/\bar{\mathbb{Q}}})$, such that
 $i'\circ \theta'=\theta'_B:K\cong
 \mathrm{End}^0(B^{2e}_{/\bar{\mathbb{Q}}})$.

 By the same argument as above, we can find a finite extension $M/F_\psi$ such that we have an isomorphism of $p$-adic Tate modules:
$T_p(B)\otimes_{F_B}K\cong T_p(A_f)\otimes_L K$, which is
equivariant under the action of $\Gal(\bar{\mathbb{Q}}/M)$.

As $B^e_{/M}$ is isogenous to $A_{f/M}$, $B_{/M}$ has good reduction
at a prime $\mathfrak{p}'$ of $M$ over the prime $\mathfrak{p}$ of
$F$. By Theorem \ref{Th3}, for any place $\lambda_B$ of $F_B$ such
that the $\lambda_B$-divisible Barsotti-Tate module of $B_{/M}$ is
ordinary, the corresponding $\lambda_B$-adic Tate module is
indecomposable as a $\Gal(\bar{\mathbb{Q}}/M)\cap
I_\mathfrak{p}$-module.  By the above isomorphism of Tate modules,
we see that $\rho_{f,\lambda}|_{Gal(\bar{\mathbb{Q}}/M)\cap
I_\mathfrak{p}}$ is indecomposable. Since
$\Gal(\bar{\mathbb{Q}}/M)\cap I_\mathfrak{p}$ is a subgroup of
$I_\mathfrak{p}$ with finite index, and $char(K_f)=0$, the
representation $\rho_{f,\lambda}|_{I_\mathfrak{p}}$ must be also
 indecomposable.
\end{proof}
From Theorem \ref{Th4}, we can prove a result on local
indecomposability of $\Lambda$-adic Galois representations. First we
briefly recall the definition of ordinary Hecke algebras defined in
\cite{H88} Section $3$.

Let $\Phi$ be the Galois closure of $F$ in $\bar{\mathbb{Q}}$. The
embedding $i_p:\bar{\mathbb{Q}}\rightarrow \bar{\mathbb{Q}}_p$
induces a $p$-adic valuation on $\Phi$ and we denote by
$\mathcal{O}_\Phi$ the valuation ring. Let $K$ be a finite extension
of the $p$-adic closure of $\Phi$ in $\bar{\mathbb{Q}}_p$, and
$\mathcal{O}_K$ be the valuation ring of $K$. Let $F_\infty/F$ be
the maximal abelian extension of $F$ unramified outside $p$ and
$\infty$, and $Z$ be its Galois group. Let $Z_1$ be the torsion free
part of $Z$. Let $\Lambda=\mathcal{O}_K[[ Z_1]]$ be the continuous
group algebra of $Z_1$ over $\mathcal{O}_K$. Then $\Lambda$ is
(noncanonically) isomorphic to the formal power series ring of
$1+\delta$ variables over $\mathcal{O}_K$, where $\delta$ is the
defect in Leopoldt's conjecture. Let $\chi:
\Gal(\bar{\mathbb{Q}}/F)\rightarrow \mathbb{Z}_p^\times$ be the
cyclotomic character. The restriction of $\chi$ to $Z_1$ gives a
character of $Z_1$, which is still denoted by $\chi$. For any
integer $k\geq 2$ and a finite order character
$\epsilon:Z_1\rightarrow \bar{\mathbb{Q}}_p$. The character
$\epsilon\chi^{k-1}:Z_1\rightarrow \bar{\mathbb{Q}}_p$ gives a
homomorphism $\kappa_{k,\epsilon}:\Lambda\rightarrow
\bar{\mathbb{Q}}_p$.

For any two open compact subgroups $U,U'$ of $G_f$ and $x\in G_f$,
we have the modified Hecke operator defined in \cite{H88} Section
$3$:
$$(UxU'): S_{k,w}(U;\mathbb{C})\rightarrow S_{k,w}(U';\mathbb{C}).$$
For each prime ideal $\mathfrak{q}$ of $F$, set
$$T_0(\mathfrak{q})=\left(U\mat{1}{0}{0}{\beta_\mathfrak{q}}U\right):S_{k,w}(U;\mathbb{C})\rightarrow
S_{k,w}(U;\mathbb{C}),$$ where $\beta_\mathfrak{q}$ is the same as
in the definition of $T(\mathfrak{q})$.

Fix an integral ideal $\mathfrak{n}$ of $F$ which is prime to $p$,
and for each integer $\alpha\geq 1$, set
$S_{k,w}(\mathfrak{n}p^\alpha;\mathbb{C})=S_{k,w}(U_1(\mathfrak{n}\cap
U(p^\alpha));\mathbb{C})$. Define the Hecke algebra
$h_{k,w}(\mathfrak{n}p^\alpha;\mathcal{O}_\Phi)$ as the
$\mathcal{O}_\Phi$-subalgebra of
$\mathrm{End}_\mathbb{C}(S_{k,w}(\mathfrak{n}p^\alpha;\mathbb{C}))$
generated by all the $T_0(\mathfrak{q})$'s over $\mathcal{O}_\Phi$
and define
$h_{k,w}(\mathfrak{n}p^\alpha;\mathcal{O}_K)=h_{k,w}(\mathfrak{n}p^\alpha;\mathcal{O}_\Phi)\otimes_{\mathcal{O}_\Phi}\mathcal{O}_K$.
Inside $h_{k,w}(\mathfrak{n}p^\alpha;\mathcal{O}_K)$ we have the
$p$-adic ordinary projector
$e_\alpha=\lim_{n\rightarrow\infty}T_0(p)^{n!}$ and we have the
ordinary Hecke algebra
$h_{k,w}^{ord}(\mathfrak{n}p^\alpha;\mathcal{O}_K)=e_\alpha
h_{k,w}(\mathfrak{n}p^\alpha;\mathcal{O}_K)$. For
$\beta\geq\alpha\geq 0$, we have a natural surjective
$\mathcal{O}_K$-algebra homomorphism
$h_{k,w}^{ord}(\mathfrak{n}p^\beta;\mathcal{O}_K)\rightarrow
h_{k,w}^{ord}(\mathfrak{n}p^\alpha;\mathcal{O}_K)$, and we define
$$h_{k,w}^{ord}(\mathfrak{n}p^\infty;\mathcal{O}_K)=\lim_{\longleftarrow_\alpha}h_{k,w}^{ord}(\mathfrak{n}p^\alpha;\mathcal{O}_K).$$
Form \cite{H88} Theorem $3.3$, the ordinary Hecke algebra
$h_{k,w}^{ord}(\mathfrak{n}p^\infty;\mathcal{O}_K)$ is a torsion
free $\Lambda$-module of finite type, and the isomorphism class of
$h_{k,w}^{ord}(\mathfrak{n}p^\infty;\mathcal{O}_K)$ as an
$\mathcal{O}_K$-algebra only depends on the class of $v$ in
$\mathbb{Z}[\mathrm{I}]/\mathbb{Z}t$, and hence we denote this
algebra by $h_v^{ord}(\mathfrak{n}p^\infty;\mathcal{O}_K)$.

Now set $\mathbf{h}=h^{ord}_0(\mathfrak{n}p^\infty;\mathcal{O}_K)$.
Fix $\mathrm{Spec}(\Lambda_L)\rightarrow \mathrm{Spec}(\mathbf{h})$
a (reduced) irreducible component of $\mathbf{h}$ and let
$\mathcal{F}:\mathbf{h}\rightarrow \Lambda_L$ be the corresponding
homomorphism. Then $\Lambda_L$ is finite
 free over $\Lambda$, and the quotient field $L$ of $\Lambda_L$ is a finite extension
of the quotient field of $\Lambda$. Let $P$ be a
$\bar{\mathbb{Q}}_p$-valued point of $\Lambda_L$, and let
$\varphi_P:\Lambda_L\rightarrow \bar{\mathbb{Q}}_p$ be the
corresponding homomorphism. The point $P$ is called an arithmetic
point if $\varphi_P$ is an extension of $\kappa_{k,\epsilon}$ for
some $k$ and $\epsilon$. If $P$ is an arithmetic point, then the
composition $\varphi_P\circ \mathcal{F}: \mathbf{h}\rightarrow
\bar{\mathbb{Q}}_p$ gives the Hecke eigenvalues of a classical
Hilbert modular form $f$ of weight $k$ and tame level
$\mathfrak{n}$. We also say that the Hilbert modular form $f$
corresponds to $P$, and $f$ belongs to the family $\mathcal{F}$. We
say that $\mathcal{F}$ has complex multiplication if there exists an
arithmetic point $P$ in $\mathcal{F}$, such that the corresponding
Hilbert modular form has complex multiplication. Once this is the
case, then for all arithmetic point  in $\mathcal{F}$, the
corresponding Hilbert modular form also has complex multiplication.

It's well known that there is a $2$-dimensional Galois
representation $\rho_{\mathcal{F}}:
\Gal(\bar{\mathbb{Q}}/F)\rightarrow GL_2(L)$ attached to
$\mathcal{F}$ such that for each prime $\mathfrak{p}$ of $F$ over
$p$, the restriction of $\rho_{\mathcal{F}}$ to the decomposition
$D_{\mathfrak{p}}$ is upper triangula;i.e.
$\rho_{\mathcal{F}}|_{D_{\mathfrak{p}}}$ is of the shape:
\begin{equation*}
\rho_{\mathcal{F}}|_{D_{\mathfrak{p}}}\sim
\mat{\delta_\mathfrak{p}}{u_\mathfrak{p}}{0}{\varepsilon_\mathfrak{p}},
\end{equation*}
here $\delta_\mathfrak{p},\varepsilon_\mathfrak{p}:D_{\mathfrak{p}}\rightarrow \Lambda_L$ are two characters of $D_{\mathfrak{p}}$.

\begin{theorem}\label{Th5}
Suppose that  $\mathcal{F}$ does not have complex multiplication,
and $\mathcal{F}$ has an arithmetic point $P$ which corresponds to a
weight $2$ Hilbert modular form satisfying the condition required in
Theorem \ref{Th4}. Then there exists a proper closed subscheme $S$
of $\mathrm{Spec}(\Lambda_L)$ such that for all arithmetic points
$P$ of $\mathrm{Spec}(\Lambda_L)$ outside $S$ which corresponds to a
classical form $f$, the representation $\rho_f|_{D_{\mathfrak{p}}}$
is indecomposable, where $\rho_f$ is the Galois representation
attached to $f$. In particular, when Leopoldt conjecture holds for
$F$ and $p$, then for all but finitely many classical forms $f$
belonging to $\mathcal{F}$, the representation
$\rho_f|_{D_{\mathfrak{p}}}$ is indecomposable.
\end{theorem}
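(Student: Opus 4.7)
The plan is to translate the indecomposability of $\rho_{f}|_{D_\mathfrak{p}}$ at an arithmetic specialization into the non-vanishing of an extension class with values in $\Lambda_L$, and to use Theorem \ref{Th4} at the weight-two point $P$ as a seed that propagates to a Zariski open locus of arithmetic points.

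First, from the upper-triangular shape
$$\rho_{\mathcal{F}}|_{D_{\mathfrak{p}}}\sim\mat{\delta_\mathfrak{p}}{u_\mathfrak{p}}{0}{\varepsilon_\mathfrak{p}},$$
the off-diagonal entry $u_\mathfrak{p}$ is a $1$-cocycle of $D_\mathfrak{p}$ with values in the rank one $\Lambda_L$-module $\Lambda_L(\delta_\mathfrak{p}\varepsilon_\mathfrak{p}^{-1})$. Its cohomology class $[u_\mathfrak{p}]\in H^1(D_\mathfrak{p},\Lambda_L(\delta_\mathfrak{p}\varepsilon_\mathfrak{p}^{-1}))$ vanishes precisely when $\rho_{\mathcal{F}}|_{D_\mathfrak{p}}$ splits over $\Lambda_L[D_\mathfrak{p}]$, and for each arithmetic point $Q$ with $\varphi_Q:\Lambda_L\rightarrow\bar{\mathbb{Q}}_p$, the specialized class $[u_\mathfrak{p}]_Q$ vanishes iff $\rho_{f_Q}|_{D_\mathfrak{p}}$ is decomposable for the attached classical form $f_Q$. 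Thus it suffices to cut out a proper closed subscheme $S\subseteq\mathrm{Spec}(\Lambda_L)$ containing all arithmetic $Q$ with $[u_\mathfrak{p}]_Q=0$.

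Second, I would apply Theorem \ref{Th4} to the given weight-two arithmetic point $P$. Since $\mathcal{F}$ is non-CM, no classical specialization of $\mathcal{F}$ has complex multiplication, so the form at $P$ is non-CM, and by hypothesis it satisfies the other conditions of Theorem \ref{Th4}. The conclusion is that $\rho_{f_P}|_{I_\mathfrak{p}}$, and hence $\rho_{f_P}|_{D_\mathfrak{p}}$, is indecomposable, so $[u_\mathfrak{p}]_P\neq 0$; this forces the global class $[u_\mathfrak{p}]$ to be nonzero. Following the cocycle-level treatment in \cite{GV}, the vanishing locus of $[u_\mathfrak{p}]$ under specialization is cut out by a proper ideal $\mathfrak{a}\subset\Lambda_L$, giving $S=V(\mathfrak{a})$ with $P\notin S$; this proves the first assertion.

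For the ``in particular'' statement, Leopoldt's conjecture gives $\delta=0$, so $\Lambda=\mathcal{O}_K[[T]]$ and $\Lambda_L$ is finite over the one-variable Iwasawa algebra $\Lambda$. The finite surjection $\mathrm{Spec}(\Lambda_L)\rightarrow\mathrm{Spec}(\Lambda)$ sends $S$ to a proper closed subscheme of $\mathrm{Spec}(\Lambda)$ and carries arithmetic points to arithmetic points with finite fibers. Arithmetic points of $\Lambda$ correspond to the sequence $T\mapsto\epsilon\chi^{k-1}(\gamma)-1$ in $\bar{\mathbb{Z}}_p$, on which any nonzero element of $\Lambda$ vanishes only finitely often by Weierstrass preparation. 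Hence only finitely many arithmetic points of $\Lambda_L$ lie in $S$, and since each such arithmetic point corresponds to a single classical form in $\mathcal{F}$, the indecomposability of $\rho_{f}|_{D_\mathfrak{p}}$ holds for all but finitely many classical $f$ in $\mathcal{F}$. The main obstacle is step three: $H^1(D_\mathfrak{p},-)$ need not commute with $\Lambda_L$-base change, so defining ``the vanishing locus of $[u_\mathfrak{p}]$'' as an honest closed subscheme whose arithmetic points match the decomposable specializations requires care; this can be handled, as in \cite{GV}, by fixing a cocycle representative and controlling the coboundary obstruction within a Noetherian $\Lambda_L$-module, which is the technical heart of the argument.
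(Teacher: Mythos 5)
Your outline matches the paper's proof: both reduce to showing the cohomology class of the off-diagonal cocycle is nonzero (using Theorem \ref{Th4} at the seed point $P$) and then invoke the argument of \cite{GV} Theorem 18 to cut out the bad locus by a nonzero ideal of $\Lambda_L$, with the ``in particular'' following from Weierstrass preparation once Leopoldt forces $\Lambda$ to be a one-variable power series ring. The one thing you only gesture at (``fixing a cocycle representative and controlling the coboundary obstruction'') is the step the paper makes explicit: restrict to the subgroup $H=\Gal(\bar{\mathbb{Q}}_p/E)\subseteq D_\mathfrak{p}$ on which $\delta_\mathfrak{p}$ and $\varepsilon_\mathfrak{p}$ trivialize, so the cocycle becomes an honest homomorphism $\lambda:H\to\Lambda_L$; injectivity of restriction (\cite{GV} Lemma 19) keeps $\lambda\neq0$, and $S$ is simply $V(I)$ for $I$ the ideal generated by $\lambda(H)$, which makes the vanishing of the specialization at $\varphi_Q$ a genuine closed condition.
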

The proof follows essentially from the argument in \cite{GV} Theorem
$18$. For the sake of completeness, we give a proof here.
\begin{proof}
By the assumption and Theorem \ref{Th4}, the representation
$\varphi_P\circ\rho_{\mathcal{F}}|_{D_{\mathfrak{p}}}$ is
indecomposable. Hence $\rho_{\mathcal{F}}|_{D_{\mathfrak{p}}}$ is
indecomposable either. Define
$c_\mathfrak{p}=\varepsilon^{-1}_\mathfrak{p}\cdot u_\mathfrak{p}:
D_{\mathfrak{p}}\rightarrow \Lambda_L$. Then it's easy to check that
$c_\mathcal{F}$ satisfies the  cocycle condition and
$\rho_{\mathcal{F}}|_{D_{\mathfrak{p}}}$ is indecomposable if and
only if the class $[c_\mathfrak{p}]$ of $c_\mathfrak{p}$ in
$H^1(D_{\mathfrak{p}},
\Lambda_L(\delta_\mathfrak{p}\varepsilon^{-1}_\mathfrak{p}))$ is
nontrivial. Since $\Lambda_L$ is finite over $\Lambda$, the residue
field of $\Lambda_L$ is finite and let $q$ be its order. Let $E_1$
be the compositum of the finitely many tamely ramified abelian
extension of $F_\mathfrak{p}$ whose order divides $q-1$, and $E_2$
be the maximal abelian pro-$p$-extension of $F_\mathfrak{p}$. Denote
by $E$ the compositum field of $E_1$ and $E_2$ and set
$H=\Gal(\bar{\mathbb{Q}}_p/E)\subseteq D_\mathfrak{p}$. Then the
characters $\delta_\mathfrak{p}$ and $\varepsilon_\mathfrak{p}$ are
trivial when restricted to $H$. Hence the restriction of
$\rho_\mathcal{F}$ to $H$ is of the shape:

$$
\rho_{\mathcal{F}}|_{H}\sim \mat{1}{\lambda}{0}{1},
$$
for some (additive) homomorphism $\lambda:H\rightarrow\Lambda_L$.
From \cite{GV} Lemma $19$, the restriction
$$H^1(D_\mathfrak{p},\Lambda_L(\delta_\mathfrak{p}\varepsilon^{-1}_\mathfrak{p}))\rightarrow H^1(H,\Lambda_L(\delta_\mathfrak{p}\varepsilon^{-1}_\mathfrak{p}))$$
is injective. Since $[c_\mathfrak{p}]$ is nontrivial in
$H^1(D_\mathfrak{p},\Lambda_L(\delta_\mathfrak{p}\varepsilon^{-1}_\mathfrak{p}))$,
the homomorphism $\lambda:H\rightarrow\Lambda_L$ is nontrivial. Let
$I$ be the ideal of $\Lambda_L$ generated by $\lambda(H)$. Then $I$
is nonzero and $I$ defines a proper closed subscheme $S$
 of $\mathrm{Spec}(\Lambda_L)$. If $f$ is a classical Hilbert modular form in
 $\mathcal{F}$, then $\rho_f|_H$ is decomposable if and only if $f$
 corresponds an arithmetic point in $S$. Hence for any arithmetic
 point $P$ of $\mathcal{F}$ outside $S$, which corresponds to the
 modular form $f$, the representation $\rho_f|_H$, and hence
 $\rho_f|_{D_\mathfrak{p}}$ is indecomposable.
\end{proof}
Now we consider the nearly ordinary case. Let
$$\mathcal{O}_{F,p}=\lim_\leftarrow \mathcal{O}_F/p^n\mathcal{O}_F$$
be the $p$-adic completion of $\mathcal{O}_F$ at $p$, and $U_F$ be
the torsion free part of $\mathcal{O}_{F,p}^\times$. Then set
$\Gamma=Z_1\times U_F$  and let $\Lambda'=\mathcal{O}_K[[ \Gamma]]$
be the continuous group algebra. For any finite character
$\varepsilon:\Gamma\rightarrow \bar{\mathbb{Q}}_p^\times$, we have
another character
$$\Gamma=Z_1\times U_F\rightarrow \bar{\mathbb{Q}}_p^\times, (a,d)\mapsto \chi(a)^\mu d^v
\varepsilon((a,d)),$$ which induces a homomorphism
$\kappa_{n,v,\varepsilon}:\Lambda'\rightarrow \bar{\mathbb{Q}}_p$.

We briefly recall the definition of nearly ordinary Hecke algebras
defined in \cite{H89a} Section $1$. For any $\alpha\geq 1$, set
$U_\alpha=U_1(\mathfrak{n})\cap U(p^\alpha)$, and let
$\mathbbm{h}_{k,w}(\mathfrak{n}p^\alpha;\mathcal{O}_\Phi)$ be the
$\mathcal{O}_\Phi$-subalgebra of
$\mathrm{End}_\mathbb{C}(S_{k,w}(\mathfrak{n}p^\alpha;\mathbb{C}))$
generated by all the Hecke operators $(U_\alpha x U_\alpha)$ for
$x\in U_0(\mathfrak{n}p^\alpha)$ over $\mathcal{O}_\Phi$. Set
$\mathbbm{h}_{k,w}(\mathfrak{n}p^\alpha;\mathcal{O}_K)=\mathbbm{h}_{k,w}(\mathfrak{n}p^\alpha;\mathcal{O}_\Phi)\otimes_{\mathcal{O}_\Phi}\mathcal{O}_K$.
Applying the ordinary projector $e_\alpha$ we get the nearly
ordinary Hecke algebra
$\mathbbm{h}_{k,w}^{n.ord}(\mathfrak{n}p^\alpha;\mathcal{O}_K)$, and
by taking limit, we have the Hecke algebra
$\mathbbm{h}_{k,w}^{n.ord}(\mathfrak{n}p^\infty;\mathcal{O}_K)$.
From \cite{H89a} Theorem $2.3$, the Hecke algebra
$\mathbbm{h}_{k,w}^{n.ord}(\mathfrak{n}p^\infty;\mathcal{O}_K)$ are
all isomorphic to each other for all pair $(k,w)$ as
$\mathcal{O}_K$-algebras and denote this algebra by
$\mathbbm{h}^{n.ord}(\mathfrak{n}p^\infty;\mathcal{O}_K)$, which is
a torsion free $\Lambda'$-module of finite type. Let
$\mathrm{Spec}(\Lambda'_L)$ be an irreducible component of
$\mathrm{Spec}(\mathbbm{h}^{n.ord}(\mathfrak{n}p^\infty;\mathcal{O}_K))$
and let
$\mathcal{F}:\mathbbm{h}^{n.ord}(\mathfrak{n}p^\infty;\mathcal{O}_K)\rightarrow
\Lambda'_L$ be the corresponding homomorphism. We know that
$\Lambda'_L$ is free of finite rank over $\Lambda'$. A
$\bar{\mathbb{Q}}_p$-rational point $P\in
\mathrm{Spec}(\Lambda'_L)(\bar{\mathbb{Q}}_p)$ is called an
arithmetic point if the corresponding homomorphism $\varphi_P$
extends $\kappa_{n,v,\varepsilon}$ for some $n,v$. For such an
arithmetic point, the composition $\varphi_P\circ(\mathcal{F})$
gives the eigenvalues of a Hilbert modular form of weight $(k,w)$
and tame level $\mathfrak{m}$.

For such an $\mathcal{F}$, we have a two dimensional Galois
representation $\rho_\mathcal{F}:
\Gal(\bar{\mathbb{Q}}/\mathbb{Q})\rightarrow GL_2(\Lambda'_L)$ such
that for any prime $\mathfrak{p}$ of $F$ over $p$, the restriction
$\rho_\mathcal{F}|_{D_\mathfrak{p}}$ is upper triangular. Similarly
with Theorem \ref{Th5}, we have the following result:

\begin{theorem}
Suppose that  $\mathcal{F}$ does not have complex multiplication,
and $\mathcal{F}$ has an arithmetic point $P$ which corresponds to a
(parallel) weight $2$ Hilbert modular form satisfying the condition
required in Theorem \ref{Th4}. Then there exists a proper closed
subscheme $S$ of $\mathrm{Spec}(\Lambda'_L)$ such that for all
arithmetic points $P$ of $\mathrm{Spec}(\Lambda'_L)$ outside $S$
which corresponds to a classical form $f$, the representation
$\rho_f|_{D_{\mathfrak{p}}}$ is indecomposable, where $\rho_f$ is
the Galois representation associated to $f$.

\end{theorem}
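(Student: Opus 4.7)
The plan is to imitate verbatim the argument used for Theorem \ref{Th5}, adapting it from the ordinary deformation ring $\Lambda_L$ to the nearly ordinary deformation ring $\Lambda'_L$. The input from Theorem \ref{Th4} is that at the distinguished arithmetic point $P_0$ of weight $(2,\ldots,2)$ meeting the hypotheses, the specialization $\rho_{f_{P_0}}|_{D_\mathfrak{p}}$ is indecomposable. Since specialization sends a decomposition of $\rho_\mathcal{F}|_{D_\mathfrak{p}}$ to a decomposition of $\rho_{f_{P_0}}|_{D_\mathfrak{p}}$, the big representation $\rho_\mathcal{F}|_{D_\mathfrak{p}}$ must itself be indecomposable over $\Lambda'_L$.

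Next, I would write
\begin{equation*}
\rho_\mathcal{F}|_{D_\mathfrak{p}}\sim\mat{\delta_\mathfrak{p}}{u_\mathfrak{p}}{0}{\varepsilon_\mathfrak{p}}
\end{equation*}
and set $c_\mathfrak{p}=\varepsilon_\mathfrak{p}^{-1}u_\mathfrak{p}$. The cocycle condition is routine, and indecomposability of $\rho_\mathcal{F}|_{D_\mathfrak{p}}$ is equivalent to the non-triviality of the class $[c_\mathfrak{p}]\in H^1(D_\mathfrak{p},\Lambda'_L(\delta_\mathfrak{p}\varepsilon_\mathfrak{p}^{-1}))$. Then I would choose an open subgroup $H\subseteq D_\mathfrak{p}$ exactly as in the proof of Theorem \ref{Th5}, namely cutting out by $H=\Gal(\bar{\mathbb{Q}}_p/E)$ where $E$ is the compositum of the tame abelian extension of $F_\mathfrak{p}$ of exponent dividing $q-1$ (with $q$ the residue cardinality of $\Lambda'_L$) and the maximal abelian pro-$p$ extension. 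On $H$ both $\delta_\mathfrak{p}$ and $\varepsilon_\mathfrak{p}$ are trivial, so that
\begin{equation*}
\rho_\mathcal{F}|_H\sim\mat{1}{\lambda}{0}{1}
\end{equation*}
for an additive homomorphism $\lambda:H\to\Lambda'_L$, and by \cite{GV} Lemma~19 the restriction $H^1(D_\mathfrak{p},\Lambda'_L(\delta_\mathfrak{p}\varepsilon_\mathfrak{p}^{-1}))\to H^1(H,\Lambda'_L)$ is injective, so $\lambda\not\equiv 0$.

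Let $I$ be the ideal of $\Lambda'_L$ generated by $\lambda(H)$; this is a nonzero proper ideal, and $S:=V(I)\subset\Spec(\Lambda'_L)$ is a proper closed subscheme. For any arithmetic point $P$ outside $S$ corresponding to a classical form $f$, specializing the entry $\lambda$ to $\varphi_P\circ\lambda$ gives a nonzero additive homomorphism $H\to\bar{\mathbb{Q}}_p$. Hence $\rho_f|_H$ is nontrivially upper triangular and thus indecomposable, and a fortiori $\rho_f|_{D_\mathfrak{p}}$ is indecomposable. The only step that needs slight care beyond copying the ordinary proof is verifying that the characters $\delta_\mathfrak{p},\varepsilon_\mathfrak{p}$ in the nearly ordinary setting still factor through an abelian quotient of $D_\mathfrak{p}$ trivializable on such an $H$, but this follows from the standard description of $\rho_\mathcal{F}|_{D_\mathfrak{p}}$ for the nearly ordinary family (\cite{H89a}), since $\delta_\mathfrak{p}$ and $\varepsilon_\mathfrak{p}$ take values in $(\Lambda'_L)^\times$ whose pro-$p$ part together with finite tame part is absorbed into the defining group cutting out $E$. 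This furnishes the desired $S$ and completes the proof.
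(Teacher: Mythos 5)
Your proof is correct and follows exactly the route the paper intends: the paper states this theorem with the remark ``Similarly with Theorem \ref{Th5}'' and gives no separate argument, and your adaptation — specialize at the distinguished weight-two point to get indecomposability of $\rho_\mathcal{F}|_{D_\mathfrak{p}}$, pass to the cocycle $c_\mathfrak{p}$, restrict to the subgroup $H$ cut out by the tame-of-exponent-$(q-1)$ and pro-$p$ abelian extensions, invoke \cite{GV} Lemma 19, and let $S=V(I)$ for $I=(\lambda(H))$ — is precisely the argument of Theorem \ref{Th5} transported to $\Lambda'_L$. Your added observation that in the nearly ordinary case both $\delta_\mathfrak{p}$ and $\varepsilon_\mathfrak{p}$ may be ramified but still factor through an abelian quotient killed by $H$ (since any continuous $(\Lambda'_L)^\times$-valued character of $D_\mathfrak{p}$ has image in $\mu_{q-1}\times(1+\mathfrak{m}_{\Lambda'_L})$) is exactly the point one must check to justify the word ``similarly,'' and it is correct.
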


\subsection{Application to a problem of Coleman}
In the rest of this paper, we work with elliptic modular forms. Let
$p>3$ be a prime number and $N$ be a positive number prime to $p$.
For each integer $k$ we use $M^\dag_k(\Gamma_1(N))$ (resp.
$S^\dag_k(\Gamma_1(N))$) to denote the space of overconvergent
$p$-adic modular forms (resp. cuspforms) of level $N$ over
$\mathbb{C}_p$ (see \cite{Ka73} for the definitions). In \cite{Co96}
Proposition $6.3$, Coleman proved that there is a linear map
$\theta^{k-1}: M^\dag_{2-k}(\Gamma_1(N))\rightarrow
M^\dag_k(\Gamma_1(N))$ such that the effect of $\theta^{k-1}$ on the
$q$-expansions  is given by the differential operator
$(q\frac{d}{dq})^{k-1}$. Also there is an operator $U$ on
$M^\dag_k(\Gamma_1(N))$ such that if $F(q)=\Sigma_{n\geq 0}a_n q^n$
is an overconvergent modular form, then $U(F)(q)=\Sigma_{n\geq
0}a_{pn}q^n$. Recall that if $F$ is a generalized eigenvector for
$U$ with eigenvalue $\lambda$ in the sense that there exists some
$n\geq 1$ such that $(U-\lambda)^n(F)=0$, then the $p$-adic
valuation of $\lambda$ is called the slope of $F$. From \cite{Co96}
Lemma $6.3$, if $f\in S^\dag_k(\Gamma_1(N))$ is a normalized
classical eigenform of slope strictly smaller than $k-1$, then $f$
cannot be in the image of $\theta^{k-1}$. On the other hand,  a
classical eigenform cannot have slope larger than $k-1$. Then it
remains to consider the remaining boundary case; i.e. overconvergent
modular forms of slope one less than the weight. In \cite{Co96}
Proposition $7.1$, Coleman proved that for $k\geq 2$, every
classical CM cuspidal eigenform of weight $k$ and slope $k-1$ is in
the image of $\theta^{k-1}$. Then he asked whether there is non-CM
classical cusp forms in the image of $\theta^{k-1}$. Since the only
possible slope for new forms of weight $k$ is $\frac{k}{2}-1$ (see
\cite{GM92} Section $4$), it's enough to consider old forms.

Let $g=\Sigma_{n\geq 1}a_n q^n$ be a classical normalized eigenform
of level $N$ and weight $k\geq 2$. Denote by
$K_g=\mathbb{Q}(a_n|n=1,2,\ldots)$ the Hecke field of $g$, which is
known to be a number field. For each prime $\mathfrak{p}$ of $K_g$
over the rational prime $p$, it induces an embedding
$i_\mathfrak{p}:K_g\rightarrow\bar{\mathbb{Q}}_p$ and let
$v_\mathfrak{p}$ be the corresponding valuation on $K_g$. Then we
can regard $g$ as a modular form over $\bar{\mathbb{Q}}_p$ by
$i_\mathfrak{p}$. As explained in \cite{GM92} Section $4$, one can
attach to $g$ two oldforms on $\Gamma_1(N)\cap \Gamma_0(p)$ whose
slopes add up to $k-1$. When the eigenform $g$ is
$\mathfrak{p}$-ordinary; i.e. $v_\mathfrak{p}(a_p)=1$, one of the
associated oldforms has slope $0$ and the other has slope $k-1$ . We
denote the latter oldform by $f$. What we can prove is the
following:

\begin{proposition}
Let $g$ be a weight two normalized classical cusp eigenform on
$\Gamma_1(N)$ with the Hecke field $K_g$. Suppose that there exists
a prime $\mathfrak{p}$ of $K_g$ over the rational prime $p$ such
that $g$ is $\mathfrak{p}$-ordinary, and the associated slope one
oldform $f$ is in the image of the operator $\theta$. Then $g$ is a
CM eigenform.
\end{proposition}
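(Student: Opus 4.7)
The plan is to argue by contrapositive: assume $g$ does not have complex multiplication and deduce that $f$ cannot lie in the image of $\theta$.

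First I would observe that $f$ is the slope one $p$-stabilization of $g$, so the two classical eigenforms share the same global Galois representation, $\rho_f = \rho_g$; the stabilization process only affects the $U_p$-eigenvalue. Writing the Hecke polynomial at $p$ as $X^2 - a_p(g) X + p\chi(p)$ with roots $\alpha, \beta$ satisfying $v_p(\alpha) = 0, v_p(\beta) = 1$, the slope one form is $f(\tau) = g(\tau) - \alpha g(p\tau)$ with $U_p$-eigenvalue $\beta$. The $\mathfrak{p}$-ordinarity hypothesis on $g$ (which for $F = \mathbb{Q}$ is just $p$-ordinarity) combined with Wiles' result recalled in the introduction yields $\rho_g|_{D_p} \sim \mat{\epsilon_1}{\ast}{0}{\epsilon_2}$ with $\epsilon_2$ unramified and $\epsilon_2(\mathrm{Frob}_p) = \alpha$.

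The key bridge is to show that $f \in \mathrm{Im}(\theta)$ forces this local representation to split, i.e.\ to make the extension class $\ast$ vanish. If $f = \theta h$ for some $h \in M^\dag_0(\Gamma_1(N))$, then comparing $q$-expansions and using the identity $U_p \circ \theta = p\, \theta \circ U_p$ exhibits $h$ as a generalized $U_p$-eigenform of weight zero with eigenvalue $\beta/p$, a unit. The existence of such an overconvergent weight-zero companion to the critical-slope classical eigenform $f$ is precisely Coleman's critical companion forms phenomenon: by results essentially contained in \cite{Co96} and made precise via triangulations of $(\varphi, \Gamma)$-modules (Breuil, Emerton, Colmez), this happens if and only if $\rho_g|_{D_p}$ decomposes as a direct sum of the two characters $\epsilon_1$ and $\epsilon_2$; in particular it forces the splitting of the restriction $\rho_g|_{D_p}$, hence also of $\rho_g|_{I_p}$.

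Finally, I would invoke Theorem \ref{Th4} with $F = \mathbb{Q}$. Since $[F:\mathbb{Q}] = 1$ is odd, no square-integrability hypothesis on the local components of $\pi_g$ is needed, and $g$ fits the setup as a parallel weight two, $\mathfrak{p}$-ordinary, non-CM eigenform. The theorem then asserts that $\rho_g|_{I_p}$ is indecomposable, in direct contradiction with the splitting deduced in the previous step; therefore $g$ must have complex multiplication. The main obstacle in this plan is the middle step: the equivalence between $f \in \mathrm{Im}(\theta)$ and the splitness of $\rho_g|_{D_p}$ is a nontrivial input from the geometry of the eigencurve or $p$-adic Hodge theory, and a careful writeup must either pin down a precise reference for the critical companion forms result or sketch it using trianguline deformation theory, since Theorem \ref{Th4} only contributes the indecomposability half of the dichotomy.
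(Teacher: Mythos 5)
Your proposal is correct and follows essentially the same route as the paper: reduce to showing $f\in\mathrm{Im}(\theta)$ forces $\rho_{g,\mathfrak{p}}|_{I_p}$ to split, then contradict Theorem \ref{Th4} (applicable since $[F:\mathbb{Q}]=1$ is odd). The step you flagged as needing a precise reference is exactly where the paper differs in presentation: instead of your sketch via $U_p\circ\theta = p\,\theta\circ U_p$ and critical companion forms or $(\varphi,\Gamma)$-module triangulations, the paper simply cites Emerton, Proposition~1.2, and Ghate, Proposition~11, for the implication that $f\in\mathrm{Im}(\theta)$ makes $\rho_{g,\mathfrak{p}}|_{I_p}$ split as $\mathbf{1}\oplus\chi_p$; so your instinct that the middle step is a self-contained citation rather than something to rederive is the one the author follows.
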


\begin{proof}
Let $\rho_{g,\mathfrak{p}}:
\Gal(\bar{\mathbb{Q}}/\mathbb{Q})\rightarrow
GL_2(K_{g,\mathfrak{p}})$ be the $\mathfrak{p}$-adic Galois
representation attached to $g$. As explained in \cite{Em}
Proposition $1.2$ or \cite{Gh06} Proposition $11$, when $f$ is in
the image of $\theta$, the restriction of $\rho_{g,\mathfrak{p}}$ to
an inertia group $I_p$ of $\Gal(\bar{\mathbb{Q}}/\mathbb{Q})$ at $p$
splits as the direct sum of the trivial character and the character
$\chi_p$, where $\chi_p$ is the $p$-adic cyclotomic character. Then
from Theorem \ref{Th4}, the eigenform $g$ must have complex
multiplication.
\end{proof}

\begin{remark}
In \cite{Em} Theorem $1.3$, Emerton proved that if the assumption in
the above proposition is true for all primes $\mathfrak{p}$ of $K_g$
over $p$, then $g$ is a CM eigenform. Hence the above proposition
can be regarded as an improvement of his theorem. Also in
\cite{Gh06} Section $6$, Ghate discussed the case when $p$ divides
the level $N$. In this case he explained that one can also attach to
the eigenform $g$ a primitive form $f$ with the same weight and
level as $g$. Then he proved that $f$ is in the image of $\theta$ if
and only if the restriction of $\rho_{g,\mathfrak{p}}$ to the
inertia group $I_p$ splits (we need to emphasize here that Ghate's
argument works for all weights, but we restrict ourselves to the
weight two case where Theorem \ref{Th4} is applicable). Hence the
result in
 Theorem \ref{Th4} also applies and the above proposition still
holds in this case.

\end{remark}

%[JH]

%{\tt ToDo-Reminder for me: Quasi-split, split over tame extension - Frobenius never does anything wrong.

%$A_h$ umbenennen. $A_\psi$ oder $A$, vieleicht ist die abh. von $\psi$ f\"ur die funktorialit\"at n\"utzlich.

%Big-cell in das Cor.  8.3.

%$Kl_{\sideset{^L}{}\G}$ - neue notation.

%Wom\"oglich monodromie f\"ur $Kl(\chi)$}

% \begin{enumerate}
% \item Local representation of Gross-Reeder
% \item Gross' use of the trace formula
% \item Frenkel-Gross construction of the connection
% \item Relation with Katz' Kloosterman sheaves
% \item summarize our construction
% \end{enumerate}

Department of Mathematics, UCLA, CA $90095$-$1555$, U.S.A.

E-mail address: zhaobin@math.ucla.edu

\end{document}